\DeclareFontFamily{OT1}{rsfs}{}
\DeclareFontShape{OT1}{rsfs}{m}{n}{ <-7> rsfs5 <7-10> rsfs7 <10->
rsfs10}{} \DeclareMathAlphabet{\mathscr}{OT1}{rsfs}{m}{n}
\newcommand{\zHkpp}{\zHk_{\phi,\psi}}
\newcommand{\zHk}{\zH^k}
\newcommand{\eq}[1]{\eqref{#1}}
\newcommand{\bel}[1]{\begin{equation}\label{#1}}
\newcommand{\beal}[1]{\begin{eqnarray}\label{#1}}
\newcommand{\beadl}[1]{\begin{deqarr}\label{#1}}
\newcommand{\eeadl}[1]{\arrlabel{#1}\end{deqarr}}
\newcommand{\eeal}[1]{\label{#1}\end{eqnarray}}
\newcommand{\eead}[1]{\end{deqarr}}
\newcommand{\eea}{\end{eqnarray}}
\newcommand{\eeaa}{\end{eqnarray*}}
\newcommand{\beaa}{\begin{eqnarray*}}
\newcommand{\be}{\begin{equation}}
\newcommand{\ee}{\end{equation}}
\DeclareFontFamily{OT1}{rsfs}{}
\DeclareFontShape{OT1}{rsfs}{m}{n}{ <-7> rsfs5 <7-10> rsfs7 <10->
rsfs10}{} \DeclareMathAlphabet{\mycal}{OT1}{rsfs}{m}{n}
\newcommand{\mcO}{\mathscr O}
\newcommand{\mcU}{\mathscr U}
\newcounter{mnotecount}[section]
\newcommand{\N}{{\Bbb N}}
\newcommand{\rmnote}[1]{}
\newcommand{\Ric}{\operatorname{Ric}}
\newcommand{\zH}{\mathring H}
\newcommand{\Lpsig}{L^2_{\psi}(g)}
\newcommand{\Lpsi}{L^2_{\psi}}
\newcommand{\Lpsim}{\zH^m_{\phi,\psi}}
\newcommand{\maclKzo}{{\mathcal K}^{\bot}}
\newcommand{\Hpsi}[1]{\zH^{#1}_{\phi,\psi}}
\def\mysavedown#1{\edef\mysubs{\mysubs#1}}
\def\mysaveup#1{\edef\mysups{\mysups#1}}
\def\mydown#1{{\mytensor}_{\vphantom{\mysubs}#1}}
\def\myup#1{{\mytensor}^{\vphantom{\mysups}#1}}
\def\tensor#1#2{
  #1
  \def\mytensor{\vphantom{#1}}
  \def\mysubs{\relax}
  \def\mysups{\relax}
  \let\down=\mysavedown
  \let\up=\mysaveup
  #2
  \let\down=\mydown
  \let\up=\myup
  #2
  }
\newcommand{\Tr}{\operatorname{Tr}}
\newcommand{\R}{\mathbb R}
\renewcommand{\setminus}{\smallsetminus}
\renewcommand{\emptyset}{\varnothing}
\renewcommand{\div}{\operatorname{div}}
\renewcommand{\epsilon}{\varepsilon}
\renewcommand{\hat}{\widehat}
\def\crn#1#2{{\vcenter{\vbox{
        \hbox{\kern#2pt \vrule width.#2pt height#1pt
           }
          \hrule height.#2pt}}}}
\renewcommand{\H}{\mathbb H}
\renewcommand{\hbar}{{\overline h}}
\newcommand{\pre}[2]{{{\vphantom{#2}}^{#1}}\kern-.2ex{#2}}
\theoremstyle{plain}
\newtheorem{theorem}{\sc Theorem}[section]
\newtheorem{lemma}[theorem] {\sc Lemma}
\newtheorem{proposition}[theorem]{\sc Proposition}
\newtheorem{corollary}[theorem] {\sc Corollary}
\theoremstyle{definition}
\newtheorem{definition}[theorem]{Definition}
\newtheorem{remark}[theorem]{\sc  Remark\rm}
\numberwithin{equation}{section}
\date{May 17, 2011}
\begin{document}

\title[Compactly supported solutions of   underdetermined  elliptic PDE] {Smooth compactly supported solutions of  some underdetermined  elliptic PDE, with gluing applications }
\author[E.
Delay]{Erwann Delay} \address{Erwann Delay, Laboratoire d'analyse
non lin\'eaire et g\'eom\'etrie, Facult\'e des Sciences, 33 rue
Louis Pasteur, F84000 Avignon, France}
\email{Erwann.Delay@univ-avignon.fr}
\urladdr{http://www.math.univ-avignon.fr/Delay}
\begin{abstract}
We give sufficient conditions for some underdetermined elliptic PDE of any order to construct smooth compactly supported solutions.
In particular we show that two smooth elements in the kernel of certain  underdetermined linear elliptic operators  $P$ can be glued in a chosen region in order
to obtain a new smooth solution. This new solution is exactly equal to  the {  initial} elements outside the gluing region.
This result completely contrast{s} with the usual unique continuation for determined or overdetermined elliptic operators. As a corollary we obtain
compactly supported solutions in the kernel of $P$ and also solutions vanishing in a chosen relatively compact open region.
We apply the result for natural geometric and physics context{s} such as divergence free field{s} or TT-tensors.
\end{abstract}

\maketitle

\noindent
{\bf Keywords :} Undetermined elliptic PDE,  compactly supported solutions, gluing.\\

\noindent
{\bf MSC 2010 :} 35J99, 58J99, 35Q35, 35Q60, 35Q75

\tableofcontents
\section{Introduction}\label{section:intro}
Determined and overdetermined elliptic operators are particularly studied and a lot of very nice results
are known. This is in part due  to {the} rigidity of solutions. A classical result about those
operator{s} is the unique continuation property (see \cite{NUW} for a recent result).
{At the opposite}, {  much less is known about} underdetermined elliptic operators, ie. with {\it surjective but not injective principal symbol}
(see however \cite{BEM} for instance).
In the present paper we are first interested about the following natural PDE problem:
let $P$ be an underdetermined elliptic operators with smooth coefficients and let $f$ be a smooth
compactly supported source. One wants to construct a smooth compactly supported solution $U$ to
$$
PU=f.
$$

We work on a smooth (ie. $C^\infty$) Riemannian manifold $(M,g)$. We do not assume that $(M,g)$ is connected {  neither} complete nor compact.
Let $\Omega$ be a relatively  compact open set with smooth boundary.
Let $P$ be an underdetermined elliptic operator  with smooth coefficients, of order $m$, acting on natural tensors fiber bundles over $M$.
Let $P^*$ be its  formal $L^2$ adjoint.
Before {  giving} the theorem we need to give some definitions, which are {required for the statement of} the hypotheses.
\begin{definition}
We {say} that $P^*$ satisfies the {Kernel Restriction Condition} (KRC) if any element in the kernel $\mathcal K$ of $P^*$
on $\Omega$ is in  $C^{m-1}(\overline\Omega)$.
\end{definition}
We refer the reader to section \ref{SwSs} for the definition of the weighted Sobolev spaces.
\begin{definition}\label{Poincare}
We will {say} that $P^*$ satisfies the {Asymptotic Poincar\'e Inequality} (API) if there exists a compact set $K\subset \Omega$ and a constant $C$, such that for
all  $C^\infty$ sections $u$, supported  in $\Omega\setminus K$ we have
\be
\label{it1ker}
C\|\phi^mP^*(u)\|_{\Lpsi} \geq
\|u\|_{\Lpsi}\;.
\ee
\end{definition}

Our theorem is now stated as follows:
\begin{theorem}\label{theorpufi}
Assume that $P^*$ satisf{ies}
(API)
 and
(KRC).
Let $f\in C^\infty(M)$ be a smooth source with compact support in $\Omega$, such that
$$
\int_\Omega \langle f,v\rangle=0,
$$
for any $v$ in the kernel of $P^*$ on $\Omega$. Then there exist $U\in C^\infty(M)$ with compact
support in  $\overline \Omega$ such that $PU=f$.

\end{theorem}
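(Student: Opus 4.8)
The plan is to use the dual (variational) method, seeking $U$ in the special form $U=\phi^{2m}\psi^2\,P^*u$. With this ansatz, $PU=f$ becomes the formally self--adjoint problem
\[
Lu:=P\bigl(\phi^{2m}\psi^2\,P^*u\bigr)=f,
\]
where $L$ has order $2m$ and, on the interior of $\Omega$ where $\phi,\psi>0$, is elliptic: its principal symbol is $\phi^{2m}\psi^2\,\sigma(P)\sigma(P)^*$, which is positive definite precisely because $P$ is underdetermined elliptic. The factor $\phi^{2m}$, with $\phi$ vanishing on $\partial\Omega$, is chosen so that, once $u$ is controlled in the weighted spaces, $U$ is flat along $\partial\Omega$ and its extension by $0$ is smooth. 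Since everything takes place inside the relatively compact set $\Omega$, the global structure of $M$ is irrelevant.

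\emph{Step 1 (Hilbert space and coercivity).} Set $\mathcal K=\ker P^*$ on $\Omega$; by (KRC) its elements lie in $C^{m-1}(\overline\Omega)$, so the hypothesis $\int_\Omega\langle f,v\rangle=0$ for $v\in\mathcal K$ is meaningful and says that $f$ is $\Lpsi$--orthogonal to $\mathcal K$. On smooth sections over $\Omega$ lying in $\Lpsi$, consider $B(u,v)=\langle\phi^mP^*u,\phi^mP^*v\rangle_{\Lpsi}$ and let $\mathcal H$ be the completion, modulo $\mathcal K$, in the norm $\sqrt{B(u,u)}$. The decisive estimate is the Poincar\'e inequality $\|u\|_{\Lpsi}\le C\|\phi^mP^*u\|_{\Lpsi}$ on $\mathcal K^{\bot}$: (API) supplies it for sections supported away from the fixed compact $K\subset\Omega$; a cutoff splitting $u=\chi u+(1-\chi)u$ together with interior (G\aa rding) elliptic estimates for $PP^*$ on a relatively compact subset gives $\|u\|_{\Lpsi}\le C\bigl(\|\phi^mP^*u\|_{\Lpsi}+\|u\|_{L^2(\Omega')}\bigr)$ for some $\Omega'\Subset\Omega$; and a standard compactness--contradiction argument (Rellich, interior elliptic regularity, identifying any $\Lpsi$--limit with vanishing $\phi^mP^*$ as an element of $\mathcal K$) removes the compact error on $\mathcal K^{\bot}$.

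\emph{Step 2 (weak solution and interior regularity).} By Cauchy--Schwarz, using $\supp f\Subset\Omega$ (so $\psi^{-1}$ is bounded on $\supp f$) and the Poincar\'e inequality, $v\mapsto\int_\Omega\langle f,v\rangle$ is a bounded functional on $\mathcal H$, and by the orthogonality hypothesis it is well defined on the quotient. Riesz representation yields $w\in\mathcal H$ with $B(w,v)=\int_\Omega\langle f,v\rangle$ for all $v$; testing against $v\in C_c^\infty(\Omega)$ and using the distributional adjoint, this says $P\bigl(\phi^{2m}\psi^2\,P^*w\bigr)=f$ on $\Omega$, i.e. $U:=\phi^{2m}\psi^2\,P^*w$ weakly solves $PU=f$, equivalently $Lw=f$ weakly. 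Since $L$ is elliptic of order $2m$ with smooth coefficients in the interior, elliptic regularity gives $w\in C^\infty(\Omega)$, hence $U\in C^\infty(\Omega)$ and $PU=f$ classically on $\Omega$.

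\emph{Step 3 (flatness at $\partial\Omega$).} This is where the main difficulty lies. Because $\supp f\Subset\Omega$, one has $Lw=0$ near $\partial\Omega$, a degenerate elliptic equation whose degeneracy there is prescribed by the powers of $\phi$. A weighted elliptic bootstrap for $L$ in the scale $\Hpsi{k}$, starting from $w\in\mathcal H$, improves $w$ to lie in $\Hpsi{k}$ for every $k$; the weighted Sobolev embedding then forces $U=\phi^{2m}\psi^2\,P^*w$, and all of its derivatives, to tend to $0$ at $\partial\Omega$, so $U$ is flat along $\partial\Omega$. Hence the extension of $U$ by $0$ across $\partial\Omega$ is smooth on $M$, has support in $\overline\Omega$, and solves $PU=f$ on all of $M$, which proves the theorem.
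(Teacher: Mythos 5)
Your proposal follows the paper's variational strategy closely: construct $U$ in the ansatz $U=\phi^{2m}\psi^2 P^*u$, solve the symmetric problem for $u$ in the weighted Hilbert space using (API) for coercivity, then use the weighted regularity scale to get that $U$ is flat at $\partial\Omega$. This is in substance the same proof. The one genuine difference worth flagging concerns how the kernel $\mathcal K$ is disposed of. The paper first invokes Theorem~\ref{iso} to solve only the \emph{projected} equation $\pi_{\mathcal K^\perp}\mathcal L_{\phi,\psi}u=\pi_{\mathcal K^\perp}\psi^{-2}f$, and then, in a separate step, proves $\pi_{\mathcal K}[\psi^{-2}PU-\psi^{-2}f]=0$ by integrating by parts; the boundary term $\int_{\partial\Omega}B(U,v)$ is controlled \emph{precisely} because (KRC) makes $v\in C^{m-1}(\overline\Omega)$ (without (KRC) one only knows $v\in C^\infty_{\phi,\varphi}$, and $\psi^2\varphi^{-2}\sim x^{-2n}$ does not vanish at the boundary, so the boundary pairing need not be finite, let alone zero). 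You instead set up Riesz representation on the quotient by $\mathcal K$ and test the resulting identity against $v\in C^\infty_c(\Omega)$, which produces $PU=f$ distributionally on $\Omega$ with no boundary contribution at all; the orthogonality hypothesis on $f$ is exactly what makes the functional descend to the quotient. This is a legitimate shortcut, and it is essentially what is hidden inside the \emph{proof} of Theorem~\ref{iso} once one observes the hypothesis forces $\psi^{-2}f\in\mathcal K^\perp$; but you should be explicit that the $\mathcal K$ you quotient by must be taken to be the finite-dimensional $\Lpsim$-kernel (the one for which (API) gives the Poincar\'e inequality), not the raw distributional kernel, and that the hypothesis then controls the functional because this $\mathcal K$ is contained in $\ker P^*$.

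Two smaller points. In Step 1 you assert that the hypothesis ``says that $f$ is $\Lpsi$-orthogonal to $\mathcal K$''; that is not what it says — $\int_\Omega\langle f,v\rangle=0$ is $L^2$-orthogonality, which is equivalent to $\Lpsi$-orthogonality of $\psi^{-2}f$, and the latter is what the variational argument requires. In Step 3 you appeal to a ``weighted Sobolev embedding'' to get pointwise flatness of $U$; the paper's machinery for this is the weighted H\"older scale $C^{k,\alpha}_{\phi,\varphi}$ (Proposition~\ref{regul} and Corollary~\ref{Cinfty}, proved via the scaling estimates of the Appendix). A weighted Sobolev-to-$L^\infty$ statement is not stated in the paper, so if you want to keep the Sobolev route you should supply the scaled embedding with the correct weight bookkeeping; otherwise it is cleaner to follow the paper and pass through $C^\infty_{\phi,\varphi}$, where the decay $|U|\lesssim\phi^m\psi^2\varphi^{-1}=x^{2(m-n)}e^{-s/x}\to 0$ (for $s>0$), and similarly for all derivatives, is immediate.
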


The assumption $\int_\Omega \langle f,v\rangle=0$ is obviously necessary in order to have the announced conclusion.

The basic example of an operator satisfying the conditions of the above theorem is a linear operator of order $m>0$, with smooth coefficients, and  of the form
$$
P^*=A(\nabla^{(m)}u)+\mbox{lot},
$$
where $A$ is an injective linear operator with smooth coefficients
(see section \ref{secex}).

Let us briefly give the idea of the proof.
We first work on $\Omega$. {Using weighted spaces, we show that we can solve
$$
PU=f
$$ with some $U=\zeta P^*u$ up to
a (weighted) projection on the orthogonal to the kernel of $P^*$}.
In this construction, $u$ and its derivatives might blow up at the boundary
but {  the smooth positive function $\zeta$ and its derivatives {vanish} more}, {so that} $U$ and its
derivatives vanish at the boundary.
In a second step, using  integrations by part, for any sections $U$ and $v$ in their respective bundles,
$$
\int_\Omega \langle PU,v\rangle=\int_\Omega \langle U,P^*v\rangle+\int_{\partial\Omega}B(U,v),
$$
where $B$ is a bilinear operator of order $m-1$.
We then show {that} the projection onto the kernel vanishes.
{Here} (KRC)  leads to the vanishing of the boundary terms.

We apply our theorem to show how elements in the kernel of $P$ are flexible  in the sense that
we can, in a chosen region, glue two smooth solution{s}  in order to construct a third one.
In particular taking one of the two solution{s} to be zero, one can truncate a solution to obtain
a solution either of compact support or vanishing on a chosen compact  set.
The operators studied here are linear, but the technique{s} can certainly be adapted
to {certain} non linear context{s} as it has already been done in the special case of the constraint map (see eg. \cite{Corvino},
\cite{CorvSchoen}, \cite{CD2}, \cite{ChrCorvIsen}).

Let $\Omega_i$, $i=1,2$ be open subsets of $M$   such that
$\Omega:=\Omega_1\cap\Omega_2\neq\emptyset $, $\overline\Omega$ is compact and the boundary
of $\Omega$ is a smooth submanifold decomposed in two disjoint smooth submanifolds : $\partial\Omega=\partial_1\Omega\cup\partial_2\Omega$ with
$\partial_i\Omega\subset\Omega_i$, $i=1,2$ (see figure A bellow).\\

\vspace{1cm}
%

\scalebox{0.5} 
{
\begin{pspicture}(0,-4.4025)(19.48,4.4225)
\psbezier[linewidth=0.04](0.0,3.8575)(0.0,3.0575)(19.46,1.4175)(19.46,2.2175)
\psbezier[linewidth=0.04](0.1,-3.4625)(0.1,-4.2625)(11.305545,-1.2476593)(12.3,-1.1425)(13.294456,-1.0373406)(19.244516,0.014647029)(19.32,-0.9825)
\psbezier[linewidth=0.04](0.12,1.6175)(0.12,0.8175)(7.936487,2.012128)(8.04,1.0175)(8.143513,0.022871878)(1.0080229,-1.851651)(0.2,-1.2625)
\psellipse[linewidth=0.04,dimen=outer](4.38,2.1875)(0.2,0.79)
\psellipse[linewidth=0.04,dimen=outer](4.29,-1.8725)(0.19,1.17)
\psellipse[linewidth=0.04,dimen=outer](13.79,0.5875)(0.27,1.53)
\psdots[dotsize=0.12,linecolor=white](4.48,2.7775)
\psdots[dotsize=0.12,linecolor=white](4.52,2.5175)
\psdots[dotsize=0.12,linecolor=white](4.54,2.2375)
\psdots[dotsize=0.12,linecolor=white](4.52,1.9775)
\psdots[dotsize=0.12,linecolor=white](4.5,1.7575)
\psdots[dotsize=0.12,linecolor=white](4.48,1.5775)
\psdots[dotsize=0.12,linecolor=white](4.4,-0.9225)
\psdots[dotsize=0.12,linecolor=white](4.42,-1.2025)
\psdots[dotsize=0.12,linecolor=white](4.44,-1.4425)
\psdots[dotsize=0.12,linecolor=white](4.44,-1.7625)
\psdots[dotsize=0.12,linecolor=white](4.46,-2.0025)
\psdots[dotsize=0.12,linecolor=white](4.48,-2.2425)
\psdots[dotsize=0.12,linecolor=white](4.46,-2.4825)
\psdots[dotsize=0.12,linecolor=white](4.4,-2.7025)
\psdots[dotsize=0.12,linecolor=white](13.94,1.8975)
\psdots[dotsize=0.12,linecolor=white](13.96,1.6575)
\psdots[dotsize=0.12,linecolor=white](13.98,1.3975)
\psdots[dotsize=0.12,linecolor=white](14.0,1.1175)
\psdots[dotsize=0.12,linecolor=white](14.02,0.8375)
\psdots[dotsize=0.12,linecolor=white](14.02,0.5975)
\psdots[dotsize=0.12,linecolor=white](14.02,0.3175)
\psdots[dotsize=0.12,linecolor=white](14.02,0.0575)
\psdots[dotsize=0.12,linecolor=white](14.0,-0.2225)
\psdots[dotsize=0.12,linecolor=white](13.96,-0.4825)
\psdots[dotsize=0.12,linecolor=white](13.9,-0.7225)
\usefont{T1}{ptm}{m}{n}
\rput(9.665781,0.5775){\Large $\Omega$}
\usefont{T1}{ptm}{m}{n}
\rput(3.2448437,0.3175){\Large $\partial_1\Omega$}
\usefont{T1}{ptm}{m}{n}
\rput(15.404843,-2.0425){\Large $\partial_2\Omega$}
\psline[linewidth=0.04cm](19.4,3.5375)(4.4,3.5775)
\psline[linewidth=0.04cm](0.14,-4.2425)(13.78,-4.2625)
\psline[linewidth=0.04cm](4.44,3.5575)(4.7,3.4575)
\psline[linewidth=0.04cm](4.42,3.5975)(4.68,3.7175)
\psline[linewidth=0.04cm](13.58,-4.1225)(13.76,-4.2225)
\psline[linewidth=0.04cm](13.76,-4.2825)(13.62,-4.3825)
\usefont{T1}{ptm}{m}{n}
\rput(13.151563,4.1975){\Large $\Omega_2$}
\usefont{T1}{ptm}{m}{n}
\rput(8.692344,-3.3425){\Large $\Omega_1$}
\psline[linewidth=0.04cm](13.64,-0.0225)(15.14,-1.6025)
\psline[linewidth=0.04cm](13.62,-0.0025)(13.68,-0.2425)
\psline[linewidth=0.04cm](13.62,0.0175)(13.82,-0.0425)
\psline[linewidth=0.04cm](3.12,0.6975)(4.12,2.2175)
\psline[linewidth=0.04cm](4.12,2.2175)(4.08,2.0375)
\psline[linewidth=0.04cm](4.1,2.2175)(3.98,2.1375)
\psline[linewidth=0.04cm](3.14,-0.0025)(4.02,-1.4625)
\psline[linewidth=0.04cm](4.0,-1.2825)(4.04,-1.4225)
\psline[linewidth=0.04cm](4.04,-1.4225)(3.9,-1.3825)
\psline[linewidth=0.04cm](4.14,2.5775)(4.24,2.7175)
\psline[linewidth=0.04cm](4.24,2.7175)(4.32,2.5975)
\psline[linewidth=0.04cm](4.1,-1.8625)(4.02,-2.1025)
\psline[linewidth=0.04cm](4.1,-1.8625)(4.22,-2.0425)
\psline[linewidth=0.04cm](13.44,1.0375)(13.5,0.8575)
\psline[linewidth=0.04cm](13.54,0.8975)(13.64,1.0575)
\end{pspicture}}
\vspace{5mm}

\begin{center}{Figure A}
\end{center}
\vspace{1cm}


\begin{definition}\label{defflux}
Let $V\in C^{m}(\Omega_1)$ and $W\in C^{m}(\Omega_2)$. We say that the {Flux Condition} (FC) hold{s} if (KRC) holds and if
for all $v\in\mathcal K$,
$$\int_{\partial_2\Omega}B(W,v)=\int_{\partial_1\Omega}B(V,v).$$
\end{definition}

\begin{theorem}\label{maintheorem}
Assume that $P^*$ satisf{ies}
(API)
 and
(KRC).
Let $V\in C^\infty({\Omega_1})$ and $W\in C^\infty({ \Omega_2})$ be two smooth elements in the
 kernel of $P$. Let $\chi$ be a smooth cutoff function equal to $1$ near $\overline{\Omega_1\backslash \Omega}$ and equal to $0$ near  $\overline{\Omega_2\backslash \Omega}$. If  $V$ and  $W$ satisfy the flux condition (FC),
 then there exist{s} $U\in C^\infty({ \Omega_1\cup\Omega_2})$, supported in $\overline\Omega$ such that
 $\chi V+(1-\chi)W +U$ lies in the kernel of $P$.
\end{theorem}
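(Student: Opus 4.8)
The plan is to subtract from the naive glued section $Z:=\chi V+(1-\chi)W$ a correction of compact support in $\Omega$, produced by Theorem~\ref{theorpufi}, so that the sum lands in $\Ker P$.

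First I would observe that $Z$ is a well-defined smooth section on $\Omega_1\cup\Omega_2$: by the choice of $\chi$ it equals $V$, together with all of its derivatives, on a neighbourhood of $\overline{\Omega_1\setminus\Omega}$, in particular near $\partial_1\Omega$, and equals $W$ on a neighbourhood of $\overline{\Omega_2\setminus\Omega}$, in particular near $\partial_2\Omega$, while on $\Omega$ it is the genuine combination of two smooth sections. Since $PV=PW=0$, in $P\bigl(\chi V+(1-\chi)W\bigr)$ every term in which no derivative lands on $\chi$ cancels, so that
\[
PZ=[P,\chi]\,(V-W)\quad\text{on }\Omega ,
\]
where $[P,\chi]$ is a differential operator of order $\le m-1$ whose coefficients are built from the derivatives of $\chi$. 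Hence $PZ$ is supported in $\supp d\chi$, which is a compact subset of $\Omega$ disjoint from $\partial\Omega$ (this is exactly where it matters that $\chi$ is locally constant near $\partial\Omega$). Therefore $f:=-PZ$, extended by $0$, belongs to $C^\infty(M)$ and has compact support in $\Omega$.

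Second I would check the solvability condition of Theorem~\ref{theorpufi}, namely $\int_\Omega\langle f,v\rangle=0$ for every $v\in\mathcal K$. Such a $v$ is smooth in the interior of $\Omega$ by elliptic regularity and lies in $C^{m-1}(\overline\Omega)$ by (KRC), and $Z$ is smooth on a neighbourhood of $\overline\Omega$; approximating $\Omega$ from inside by smooth subdomains then justifies the integration-by-parts identity of the introduction, so that, using $P^*v=0$ on $\Omega$,
\[
\int_\Omega\langle f,v\rangle=-\int_\Omega\langle PZ,v\rangle=-\int_\Omega\langle Z,P^*v\rangle-\int_{\partial\Omega}B(Z,v)=-\int_{\partial\Omega}B(Z,v).
\]
Now split $\partial\Omega=\partial_1\Omega\cup\partial_2\Omega$ and replace $Z$ by $V$ on the first piece and by $W$ on the second (legitimate because $Z$ agrees with each of them, with all derivatives of order $\le m-1$, near the corresponding piece): the boundary integral becomes, in the orientation conventions fixed in Definition~\ref{defflux}, the difference $\int_{\partial_1\Omega}B(V,v)-\int_{\partial_2\Omega}B(W,v)$, which vanishes for every $v\in\mathcal K$ precisely because $V$ and $W$ satisfy the flux condition (FC). So the hypotheses of Theorem~\ref{theorpufi} are met.

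Finally, Theorem~\ref{theorpufi} furnishes $U\in C^\infty(M)$ with compact support in $\overline\Omega$ and $PU=f=-PZ$; restricting $U$ to $\Omega_1\cup\Omega_2$ and adding, $P\bigl(\chi V+(1-\chi)W+U\bigr)=PZ+PU=0$, while $U$ is still supported in $\overline\Omega$, which is the assertion. I expect the one genuinely delicate step to be the second one: making the integration by parts rigorous when $v$ is only $C^{m-1}$ up to the boundary (the role of (KRC)), and checking that the orientations on $\partial_1\Omega$ and $\partial_2\Omega$ are exactly those appearing in Definition~\ref{defflux}, so that the boundary term is annihilated by (FC). The remaining steps are routine manipulations with the cutoff $\chi$ and the commutator $[P,\chi]$.
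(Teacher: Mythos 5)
Your proof is correct and follows essentially the same route as the paper's: set $f=-P(\chi V+(1-\chi)W)$, note it is smooth and compactly supported in $\Omega$ because $\chi$ is locally constant near $\partial\Omega$, verify $L^2$-orthogonality to $\ker P^*$ via integration by parts and the flux condition, and invoke Theorem~\ref{theorpufi}. The orientation issue you flag (matching the sign in $\int_{\partial\Omega}B$ with the decomposition $\int_{\partial_2\Omega}-\int_{\partial_1\Omega}$) is indeed exactly the convention the paper uses in Definition~\ref{defflux} and the remark following Theorem~\ref{thetheorem2}.
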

More general gluing can be done, see remark \ref{collegene}.

There are many other interesting situation{s} where this theorem is useful: we included few examples of section \ref{examples} but it is certain that there are many other applications.
Note that if the flux of $V$ on $\partial_1\Omega$ is zero, i.e.
$$
\int_{\partial_1\Omega}B(V,v)=0\;,\;\;\forall v\in \mathcal K=\;\mbox{ker}\; P^*,
$$ then one can take $W=0$: this shows that it is always possible to truncate a solution or make it
vanish on a chosen region. This allows one to construct solutions on quotients or on connected sums.
This is also a powerful tool to prove the density of compactly supported solutions in the kernel and
 to prove that this last set of solutions is infinite dimensional.

Let us illustrate the applications with two examples.
If $P$ is the divergence operator acting on one forms, as it is natural for instance in fluid mechanic  or in eletromagnetism, we obtain the
\begin{corollary}
For $P=d^*$, the divergence operator acting on one forms, the conclusions of  Theorems \ref{theorpufi} and \ref{maintheorem}   hold.
\end{corollary}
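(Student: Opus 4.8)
The plan is to verify that $P=d^{*}$ fits the abstract framework, i.e.\ that its formal adjoint satisfies (API) and (KRC), and then to translate the hypotheses $\int_{\Omega}\langle f,v\rangle=0$ and (FC) into concrete statements. First I identify the data. The operator $P=d^{*}\colon C^{\infty}(\Lambda^{1}T^{*}M)\to C^{\infty}(M)$ has order $m=1$; its principal symbol at a nonzero covector $\xi$ sends a one-form $\alpha$ to a nonzero multiple of $\langle\xi,\alpha\rangle$, which is onto the scalars and (for $n=\dim M\geq 2$) has $(n-1)$-dimensional kernel, so $P$ is underdetermined elliptic. Its formal $L^{2}$-adjoint is $P^{*}=d\colon C^{\infty}(M)\to C^{\infty}(\Lambda^{1}T^{*}M)$, and since $\nabla f=df$ for the Levi-Civita connection, $P^{*}$ is exactly of the basic-example form $A(\nabla^{(1)}u)+\mathrm{lot}$ with $A=\Id$ injective. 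Hence, by the discussion in Section~\ref{secex}, $P^{*}$ satisfies (API) and (KRC), and Theorems~\ref{theorpufi} and~\ref{maintheorem} apply verbatim.

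For completeness I would make the two conditions explicit. For (KRC): on $\Omega$ the kernel $\mathcal K$ of $P^{*}=d$ is the space of locally constant functions, which are smooth on $\overline\Omega$, hence a fortiori in $C^{0}(\overline\Omega)=C^{m-1}(\overline\Omega)$. For (API): one needs the weighted Poincar\'e inequality $C\|\phi\, du\|_{\Lpsi}\geq\|u\|_{\Lpsi}$ for all smooth $u$ supported in $\Omega\setminus K$. I would prove it by the fundamental theorem of calculus along the integral curves of $\grad\phi$ (equivalently, geodesics meeting $\partial\Omega$ orthogonally): on a collar of $\partial\Omega$ each point $x$ is the endpoint of such a curve $\gamma$, and since $u$ vanishes in the weighted sense at $\partial\Omega$ one gets $|u(x)|\leq\int_{\gamma}|du|$; squaring, applying Cauchy--Schwarz against the weight, and integrating with the coarea formula, the choice of $\phi,\psi$ from Section~\ref{SwSs} absorbs the resulting loss once $K$ is taken large enough. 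This is precisely the computation carried out in general in Section~\ref{secex}, which is why the cleanest argument is simply to quote it.

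Finally I would record the conclusions. Theorem~\ref{theorpufi} gives: for every $f\in C^{\infty}(M)$ with compact support in $\Omega$ such that $\int_{\Omega}f v=0$ for all $v\in\mathcal K$ --- equivalently, $\int f=0$ over each connected component of $\Omega$ --- there is $U\in C^{\infty}(\Lambda^{1}T^{*}M)$ with compact support in $\overline\Omega$ and $d^{*}U=f$. For Theorem~\ref{maintheorem}, integration by parts reads $\int_{\Omega}(d^{*}U)\,v=\int_{\Omega}\langle U,dv\rangle-\int_{\partial\Omega}v\,U(\nu)$ with $\nu$ the outward unit normal, so the order-$(m-1)=0$ boundary operator is $B(U,v)=-v\,U(\nu)$; the flux condition (FC) then becomes $\int_{\partial_{2}\Omega}v\,W(\nu)=\int_{\partial_{1}\Omega}v\,V(\nu)$ for all locally constant $v$, i.e.\ equality of the normal fluxes of $V$ and $W$ across $\partial_{1}\Omega$ and $\partial_{2}\Omega$ on each component, and under it two divergence-free one-forms $V,W$ are glued as in Theorem~\ref{maintheorem}. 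The only genuine analysis is (API); identifying the symbol, $\mathcal K$, and $B$ is routine, so the new content of the corollary is essentially the recognition of $d^{*}$ as a basic example.
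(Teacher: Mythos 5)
Your proposal is correct and takes essentially the same route as the paper: it verifies (KRC) by identifying $\ker P^*=\ker d$ as the (locally) constant functions and (API) by observing that $P^{*}=d=\nabla$ on functions is the $m=1$ case of the basic example in Section~\ref{secex} (which the paper itself settles by citing~\cite{CD2}, Proposition~C.4). The extra material you supply — the symbol computation, the explicit boundary operator $B(U,v)=-vU(\nu)$, and the reformulation of (FC) as equality of normal fluxes — is consistent with the paper and merely makes explicit what the paper leaves implicit.
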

As a consequence, we exhibit  in  section \ref{secdivfree} a procedure to glue some electric field to the electric field
surrounding a point  charge.\\

In  general relativity, the constraint equations are the initial data constraint for  the evolution to be Einstein \cite{Carges}.
When constructing CMC initial data, it is first natural to construct TT-tensors, i.e. trace free and divergence
free symmetric covariant two tensor{s}. In that context, we get
\begin{corollary}
Let $P=\mathcal D^*$ be the divergence operator acting on trace free symmetric covariant two tensors. If $n\geq 3$,
then the conclusions of  theorem \ref{theorpufi} and \ref{maintheorem} hold.
\end{corollary}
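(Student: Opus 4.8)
The plan is to identify the data of the corollary with that of the theorems and then verify the two hypotheses (API) and (KRC). Here $P=\mathcal D^*$ is (up to sign) the divergence on the bundle $S^2_0T^*M$ of trace-free symmetric $(0,2)$-tensors, so its formal adjoint is the conformal Killing operator $P^*=\mathcal D$, $(\mathcal Du)_{ij}=\tfrac12(\nabla_iu_j+\nabla_ju_i)-\tfrac1n(\nabla^ku_k)g_{ij}$ acting on one-forms; this is a first-order operator ($m=1$) with smooth coefficients on a natural tensor bundle, so the standing assumptions of the theorems are met once (API) and (KRC) are checked. Everything then reduces to showing that $\mathcal D$ falls under the example class of Section~\ref{secex} and that the target of $P$ is genuinely larger than its source, so that $P$ is underdetermined rather than determined elliptic.

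First I would compute the principal symbol of $\mathcal D$: at a nonzero covector $\xi$ it is $u\mapsto \tfrac12(\xi\odot u)-\tfrac1n\langle\xi,u\rangle g$. If this vanishes, contracting with $\xi$ forces $u=\lambda\xi^{\flat}$ for some scalar $\lambda$, and re-substituting gives $\lambda\,\tfrac{2n-2}{n}=0$, hence $\lambda=0$ as soon as $n\ge2$; thus $\sigma_\xi(\mathcal D)$ is injective and $\sigma_\xi(P)=\sigma_\xi(\mathcal D)^*$ is surjective, so $P$ is elliptic. Moreover $\operatorname{rank}S^2_0T^*M=\tfrac{n(n+1)}2-1$ exceeds $\operatorname{rank}T^*M=n$ exactly when $(n-2)(n+1)>0$, i.e.\ when $n\ge3$; so for $n\ge3$ the symbol of $P$ is surjective but not injective, i.e.\ $P$ is underdetermined elliptic, whereas for $n=2$ it would be determined elliptic and the conclusions would in fact fail. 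This is the only role of the hypothesis $n\ge3$.

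With injectivity of $\sigma(\mathcal D)$ established, $\mathcal D$ belongs to the class of operators treated in Section~\ref{secex} (schematically $P^*=A(\nabla u)+\mathrm{lot}$ with injective leading part), and I would simply quote from there that $P^*=\mathcal D$ satisfies (API) and (KRC). If one prefers a self-contained verification of (KRC): the kernel $\mathcal K$ of $\mathcal D$ on $\Omega$ is the finite-dimensional space of conformal Killing fields of $(\Omega,g)$, and the conformal Killing equation prolongs to a closed first-order system, namely a connection on a finite-rank jet bundle whose coefficients are built from $g$ and its curvature and are therefore smooth on a neighbourhood of $\overline\Omega$; solving this linear system along curves reaching $\partial\Omega$ shows every element of $\mathcal K$ extends smoothly across $\partial\Omega$, so $\mathcal K\subset C^\infty(\overline\Omega)\subset C^{m-1}(\overline\Omega)=C^0(\overline\Omega)$, which is (KRC). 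Granting (API) and (KRC), Theorems~\ref{theorpufi} and~\ref{maintheorem} apply verbatim with $P=\mathcal D^*$, which is exactly the assertion; in this setting the orthogonality condition $\int_\Omega\langle f,v\rangle=0$ and the flux condition (FC) are read against conformal Killing fields $v$, with $B(\cdot,v)$ the explicit boundary bilinear form produced by integrating $\mathcal D^*$ by parts.

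The part needing the most care is checking that $\mathcal D$ genuinely meets the hypotheses of Section~\ref{secex}: $\mathcal Du$ is the trace-free symmetrisation of $\nabla u$, and that operation, viewed as a bundle map $T^*M\otimes T^*M\to S^2_0T^*M$, is surjective but not injective, so one must be slightly careful that the relevant hypothesis there is injectivity of the full symbol $u\mapsto A(\xi\otimes u)$ — which does hold by the computation above — rather than injectivity of the bundle map $A$ itself; equivalently, one checks that $\mathcal D$ nevertheless prolongs to a closed system, the classical fact used above. Once this point is pinned down there is no further analytic obstacle: the weighted Korn/Poincaré-type estimate that constitutes (API) and the boundary regularity that constitutes (KRC) are then delivered by the general machinery, and the two corollaries for $d^*$ and $\mathcal D^*$ are parallel, the only difference being the value of $n$ for which the symbol of $P$ stays non-injective.
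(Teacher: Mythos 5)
There is a genuine gap in the (API) verification, and the student half-sees it but then waves it away incorrectly. The (API) lemma of Section~\ref{secex} is proved by the pointwise estimate $|A(\phi^m\nabla^{(m)}u)|\geq c\,|\phi^m\nabla^{(m)}u|$, which requires $A$ to be injective as a \emph{bundle map} on the whole of $T^*M\otimes E$, not merely that the symbol $u\mapsto A(\xi\otimes u)$ be injective on rank-one tensors. For $\mathcal D$ the bundle map $A$ is the trace-free symmetrisation $T^*M\otimes T^*M\to S^2_0T^*M$, which kills the skew and trace parts of $\nabla\omega$ and is therefore \emph{not} injective. Consequently one cannot "simply quote" Section~\ref{secex} for (API), and the alternative the student proposes — "$\mathcal D$ nevertheless prolongs to a closed system" — is exactly the mechanism that delivers (KRC), not (API): prolongation gives boundary regularity of kernel elements, it does not give a weighted Poincar\'e-type inequality for sections supported near $\partial\Omega$.

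The paper makes precisely this point ("we can not apply [CD2], corollary D.5... because $|\mathcal D\omega|$ does not control $|d^*\omega|$ pointwise") and then proves (API) for $\mathcal D$ by a dedicated, nearly two-page argument (Proposition~\ref{francois}): near the boundary one passes to a product metric $dx^2+h$, splits $\omega=f\,dx+\alpha$, and carefully combines the identities \eq{estif}, \eq{estialpha} obtained from integrating $\langle\mathcal D\omega,\cdot\rangle$ against $f\,dx\otimes dx$ and $\alpha\otimes dx$, together with the algebraic decomposition $|\mathcal D\omega|^2=|(\mathcal D\omega)_{00}|^2+2|\nu|^2+|\eta|^2$ and the Korn-type inequality $2|\mathcal D_h\alpha|_h^2\geq\tfrac{n-2}{n-1}|d_h^*\alpha|^2-\operatorname{Ric}_h(\alpha,\alpha)$ on the slices. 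The $n\geq 3$ hypothesis enters again here through the positivity of $\tfrac{n-2}{n-1}$, not only through the fibre-dimension count you did. Your symbol computation, the identification of when $P$ is underdetermined, and the (KRC) argument via the classical prolongation of the conformal Killing equation are all correct; the missing content is the proof of (API), which is the part of the corollary that is genuinely new.
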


For $n=2$, the operator $\mathcal D^*$  is (determined) elliptic and the corollary above is not true, see section \ref{secTT}.

As in the case of electric field, in section \ref{secTT}, we  glue some TT-tensors with a Beig-Bowen-York tensor.
Note  that in all the results above, in order to construct TT-tensors one need{s} to assume there is no conformal Killing field ($\ker \mathcal D=\{0\}$),
which is not the case here.
On {a} flat torus for instance, one can  truncate
elements in the kernel on $\R^n$ as before and reproducing them by periodicity.
On some quotients of the hyperbolic space, one can also  truncate a TT-tensors of $\H^n$
making its  support lie   in a fundamental domain and again transport it by the action.
The same can be done on the Sphere or on quotients thereof or on other Riemannian manifolds
with symmetries.

\section{Weighted
spaces}\label{SwSs}

We will use the spaces already introduced in {the} appendix of \cite{CD2} in the special case
of {a} compact boundary. We keep the general notation of \cite{CD2} for eventual
adaptations of the paper in other context{s} such as asymptotically euclidian
or asymptotically hyperbolic {manifolds}.

Let $x\in C^{\infty}(\overline{\Omega})$ be a (non negative) defining function of the boundary  $\partial\Omega=x^{-1}(\{0\})$.

Let $a\in\N$, $s\in\R$, $s\neq0$ and let us define $$\phi=x^2\;, \quad \psi=x^{2(a-n/2)}e^{-s/x}\mbox{ and }  \varphi=x^{2a}e^{-s/x}.$$

For $k\in \N$ let $H^k_{\phi,\psi}$ be the space of $H^k_{{loc}}$
functions or tensor fields such that the norm
 \be \label{defHn}
 \|u\|_{H^k_{\phi,\psi}}:=
(\int_M(\sum_{i=0}^k \phi^{2i}|\nabla^{(i)}
u|^2_g)\psi^2d\mu_g)^{\frac{1}{2}} \ee is finite, where
$\nabla^{(i)}$ stands for the tensor $\underbrace{\nabla ...\nabla
}_{i \mbox{ \scriptsize times}}u$, with $\nabla$ --- the
Levi-Civita covariant derivative of $g$;  For
$k\in \N$ we denote by $\zH^k_{\phi,\psi}$ the closure in $H^k_{\phi,\psi}$ of the
space of $H^k$ functions or tensors which are compactly (up to a
negligible set) supported in $\Omega$, with the norm induced from
$H^k_{\phi,\psi}$.
The $\zH^k_{\phi,\psi}$'s are Hilbert spaces with the obvious scalar product
associated to the norm \eq{defHn}. We will also use the following
notation
$$
\quad \zH^k  :=\zH^k  _{1,1}\;,\quad
L^2_{\psi}:=\zH^0_{1,\psi}=H^0_{1,\psi}\;,
$$ so that $L^2\equiv \zH^0:=\zH^0_{1,1}$. We  set
$$
W^{k,\infty}_{\phi}:=\{u\in W^{k,\infty}_{{{loc}}} \mbox{ such that }
\phi^i|\nabla^{(i)}u|_g\in L^{\infty}, i=0,...,k\}\;,
$$
with the obvious norm.

For  $k\in\N$ and $\alpha\in [0,1]$, we define
$C^{k,\alpha}_{\phi,\varphi}$ the space of $C^{k,\alpha}$
functions or tensor fields  for which the norm
$$
\begin{array}{l}
\|u\|_{C^{k,\alpha}_{\phi,\varphi}}=\sup_{x\in
M}\sum_{i=0}^k\Big(
\|\varphi \phi^i \nabla^{(i)}u(x)\|_g\\
 \hspace{3cm}+\sup_{0\ne d_g(x,y)\le \phi(x)/2}\varphi(x) \phi^{i+\alpha}(x)\frac{\|
\nabla^{(i)}u(x)-\nabla^{(i)}u(y)\|_g}{d^\alpha_g(x,y)}\Big)
\end{array}$$ is finite.

\begin{remark}
In our setting, it is more usual to set $\phi=x$ and for $\psi$ and $\varphi$ a power of $x$. This can be done here also as long as we work with finite differentiability. We choose to take the exponential weight
to treat all the case{s} in the same way. Note also  for applications that condition (API) is in general {easier}
 to obtain with exponential weight.
\end{remark}

\section{Isomorphism properties}

We are interested in the surjectivity  of $P$ applied to sections $U$ that vanish exponentially at the
boundary. For the construction of a right-inverse of $P$, we will use the formal $L^2$ adjoint $P^*$.
Duality in $L^2$ makes natural to look at $P^*$ acting  on sections $u$ that can blow up exponentially
on the boundary.

\begin{proposition}\label{RI}
There exist{s} a constant $C$ such that for all $u\in\zH^{m}_{\phi,\psi}$,
\be\label{RIC}
C\left(\|\phi^mP^*u\|_{\Lpsi}+\|u\|_{\zH^{m-1}_{\phi,\psi}}\right) \geq
\|u\|_{\Lpsim}, \ee
\end{proposition}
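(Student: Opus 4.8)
\emph{Proof plan.} I would treat \eqref{RIC} as an a priori estimate and first reduce to $u\in C^\infty$ with compact support in $\Omega$: such $u$ are dense in $\zH^m_{\phi,\psi}$ and both sides of \eqref{RIC} are continuous in the $\zH^m_{\phi,\psi}$ norm, the only point being the bound $\|\phi^mP^*u\|_{\Lpsi}\le C\|u\|_{\zH^m_{\phi,\psi}}$, which follows by writing $P^*u=A(\nabla^{(m)}u)+\mbox{lot}$ and using that $\phi=x^2$ and the coefficients of $P^*$ are bounded on the compact set $\overline\Omega$. I would then assemble \eqref{RIC} from a classical interior estimate on a compact part of $\Omega$ and a rescaled estimate in a collar of $\partial\Omega$.

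On a compact $\Omega_0\Subset\Omega$ the weights $\phi$, $\psi$, $1/\psi$ lie between positive constants, while $g$ and the coefficients of $P^*$ are smooth there and $P^*$ is elliptic with injective principal symbol; the classical interior estimate for operators with injective symbol (obtained from a left parametrix, $P^*$ having a left inverse modulo a smoothing operator) gives $\|u\|_{H^m(\Omega_0)}\le C\bigl(\|P^*u\|_{L^2(\Omega_0')}+\|u\|_{L^2(\Omega_0')}\bigr)$ for a slightly larger $\Omega_0'\Subset\Omega$, which is \eqref{RIC} restricted to $\Omega_0$.

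Near $\partial\Omega$ the idea is to rescale at the scale $\phi=x^2$, which is exactly the scale built into the norm \eqref{defHn}. I would fix a small $\theta>0$, cover a collar $\{0<x<\delta\}$ by a locally finite family of balls $B_i=B(x_i,\theta x(x_i)^2)$ of uniformly bounded overlap with $2B_i\Subset\Omega$ (possible because $x\asymp d_g(\cdot,\partial\Omega)$ there), set $r_i=\phi(x_i)$, and dilate $B_i$ to the unit ball by $y\mapsto x_i+r_iy$. Under this isotropic dilation the pulled-back metric converges in $C^\infty$, uniformly in $i$, to a flat metric; the operator $r_i^mP^*$ becomes an order-$m$ operator $\widetilde P^*$ whose principal part is within $O(r_i)$ of the constant-coefficient operator $v\mapsto A(x_i)(\nabla^{(m)}v)$ — with $A(x_i)$ uniformly injective over $\overline\Omega$ — and whose lower-order coefficients acquire extra factors $r_i^{m-j}\to0$; and, crucially, $\psi=x^{2(a-n/2)}e^{-s/x}$ is comparable to $\psi(x_i)$ on $2B_i$ with constants independent of $i$, since there $x/x_i\to1$ and $|1/x-1/x_i|$ stays bounded. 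The rescaled operators thus form a precompact family of uniformly elliptic, injective-symbol operators, so the standard local estimate $\|v\|_{H^m(B(0,1))}\le C\bigl(\|\widetilde P^*v\|_{L^2(B(0,2))}+\|v\|_{L^2(B(0,2))}\bigr)$ holds with $C$ uniform in $i$; this is the one step that genuinely uses ellipticity of $P^*$ — injectivity of its principal symbol at every nonzero covector — rather than just injectivity of the coefficient of the top jet.

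It remains to reassemble. Undoing the dilation turns the local estimate into $\sum_{j=0}^m r_i^{2j}\|\nabla^{(j)}u\|_{L^2(B_i)}^2\le C\bigl(r_i^{2m}\|P^*u\|_{L^2(2B_i)}^2+\|u\|_{L^2(2B_i)}^2\bigr)$ with $C$ independent of $i$; multiplying by $\psi(x_i)^2$, replacing $\psi(x_i)$ by $\psi$ and $r_i$ by $\phi$ up to uniform constants on $2B_i$, and summing over $i$ with the finite overlap gives the collar form of \eqref{RIC}, in fact with $\|u\|_{\Lpsi}$ — hence a fortiori with $\|u\|_{\zH^{m-1}_{\phi,\psi}}$ — on the right-hand side. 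Adding the interior estimate of the second paragraph yields \eqref{RIC}, and density extends it to all $u\in\zH^m_{\phi,\psi}$. I expect the main obstacle to be the third paragraph: verifying that $g$, its connection, the coefficients of $P^*$, and the weight $\psi$ all transform harmlessly under the rescaling at scale $\phi=x^2$, and that the family of rescaled operators is precompact enough for the local elliptic constant to be taken uniform in $i$; once that is granted, only the summation remains.
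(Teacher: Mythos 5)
Your proof is correct, but it takes a genuinely different route from the paper. The paper first proves the \emph{unweighted} estimate $\|u\|_{H^m}\le C(\|P^*u\|_{L^2}+\|u\|_{H^{m-1}})$ for $u$ compactly supported in $\Omega$ by invoking H\"ormander's semi-Fredholm result for operators with injective principal symbol (or, equivalently, his 1966 a priori estimate) on a compact extension, and then transfers this to the weighted spaces by the algebraic substitution $u=\phi^m\psi w$ together with the scaling properties \eqref{lcond} of the weights; the weighting is thus handled purely by Leibniz's rule. You instead build the weighted estimate directly, covering a collar of $\partial\Omega$ by balls of radius $\sim\phi$, rescaling each to unit size, applying the classical interior parametrix estimate with constants uniform over the precompact family of rescaled operators, and summing with bounded overlap; this is essentially the scaling argument the paper itself deploys in its appendix to obtain the Sobolev and H\"older estimates for $\mathcal L_{\phi,\psi}$, but here applied to $P^*$ alone. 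Your route is more self-contained (no appeal to the Fredholm theory of \cite{Hormander3}), and because the local elliptic estimate already has only an $L^2$ term on the right, it actually yields the stronger bound with $\|u\|_{L^2_\psi}$ on the right-hand side --- that is, it proves Proposition \ref{RI2} directly, without passing through \eqref{RIC} and the interpolation inequality \eqref{InterpInequ}. The price is that you must verify the geometric comparabilities $\phi\asymp\phi(x_i)$ and $\psi\asymp\psi(x_i)$ on $2B_i$ and the uniformity of the local elliptic constant over the rescaled family; you identify these correctly, and the argument that $|1/x-1/x_i|$ stays bounded on $2B_i$ is exactly the reason the exponential weight is compatible with this scale. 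One cosmetic caution: in your rescaling paragraph you phrase the frozen limit as $v\mapsto A(x_i)(\nabla^{(m)}v)$, which is the form from the basic example of Section \ref{secex}; for the general operator treated by the proposition what matters is, as you say at the end, the injectivity of the full principal symbol at every nonzero covector, not just injectivity of a top-order coefficient map $A$, and the frozen limit is $\sum_{|\alpha|=m}a_\alpha(x_i)\partial^\alpha$.
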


\begin{proof}
 By a density argument,     we need only
 to prove \eq{RIC} for   smooth compactly supported
fields.
The proof proceeds in two steps: we first show that a similar inequality \eq{nwRIC} is valid in the usual (non weighted) Sobolev spaces
and then we show that \eq{nwRIC} implies the  estimate in  weighted spaces \eq{RIC}.

\begin{lemma}
 {  $P^*$ having  injective symbol and smooth coefficients up to the boundary}, there exist
 a constant $C'$ such that for all smooth  sections $u$ compactly supported in $\Omega$,
 \be\label{nwRIC}
C'\left(\|P^*(u)\|_{L^2}+\|u\|_{H^{m-1}}\right) \geq
\|u\|_{L^2}, \ee

\end{lemma}
\begin{proof}
This is exactly the   lemma 1.0.2. of \cite{Hormander1966} with $K$ there equal to $\overline \Omega$ here.

An alternative way to get this result is as follows:
 We smoothly prolong the set $\Omega$, together
with the operator $P$  (keeping the symbol surjective), to a  compact riemannian manifold $M$ without boundary. From  \cite{Hormander3} Theorem 19.5.1 (see also the remark below Theorem 19.5.2 there),
the operator $P^*: H^m(M)\rightarrow L^2(M)$ is  semi-Fredholm (ie. has finite dimensional kernel and closed range)
thus the estimate (see eg. equation (19.5.1) in \cite{Hormander3})
$$
\|u\|_{H^m}\leq C'(\|P^*u\|_{L^2}+\|u\|_{H^{m-1}}),
$$
is valid on $M$, so in particular for smooth sections $u$ compactly supported in $\Omega$.\\

Remark  that for a first order operator with injective symbol, one can use  \cite{Taylor1} proposition 12.1
(note that the boundary condition there is stated to obtain \eq{nwRIC} for all $u\in H^1$ and not only
for $u\in\mathring H^1$).
\end{proof}

\begin{lemma}
The estimate \eq{nwRIC} implies the estimate \eq{RIC}
\end{lemma}
\begin{proof}
We first remark that the    condition
$$
\|u\|_{H^{m-1}}+\|P^*u\|_{L^2}\geq c\|u\|_{H^m},
$$
is equivalent to
$$
\|u\|_{H^{m-1}}+\|P^*u\|_{L^2}\geq c'\|\nabla^{(m)}u\|_{L^2}.
$$
(A similar equivalence is also true in our weighted Sobolev spaces.)
We put $u=\phi^m\psi w$ in the last inequality.
We get
\begin{eqnarray*}
\|\nabla^{(m)}u\|_{L^2}&=&\|\phi^m\psi \nabla^{(m)} w+\sum_{i=0}^{m-1}C_i\nabla^{(m-i)}(\phi^m\psi)\nabla^{(i)}w\|_{L^2}\\
&\geq&\|\phi^m\nabla^{(m)} w\|_{L^2_\psi}-c_1\|w\|_{H^{m-1}_{\phi,\psi}}.
\end{eqnarray*}
Similarly, we find
\begin{eqnarray*}
\|P^*u\|_{L^2}&=&\|\phi^m\psi P^* w+lot\|_{L^2}\\
&\leq&\|\phi^m P^* w\|_{L^2_\psi}+c_2\|w\|_{H^{m-1}_{\phi,\psi}},
\end{eqnarray*}
because  the {  lower order terms} are of the form : smooth coefficient on $\overline \Omega$ times derivatives of $\phi^m\psi$ times derivatives
of $w$ and each of them is bounded by a term of the form $\|\phi^{i}\nabla^{(i)}w\|_{L^2_{\psi}}$ with $i\leq m-1$
(see appendix for details).
\end{proof}

The  two lemma above give the proposition \ref{RI}.
\end{proof}

\begin{proposition}\label{RI2}
There exist{s} a constant $C$ such that for all $u\in\zH^{m}_{\phi,\psi}$,
\be\label{RIC2}
C\left(\|\phi^mP^*(u)\|_{\Lpsi}+\|u\|_{L^2_\psi}\right) \geq
\|u\|_{\Lpsim}, \ee
\end{proposition}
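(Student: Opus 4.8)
The plan is to upgrade the estimate \eqref{RIC} of Proposition \ref{RI} to \eqref{RIC2} by replacing the intermediate norm $\|u\|_{\zH^{m-1}_{\phi,\psi}}$ by $\|u\|_{\Lpsi}$ at the cost of a larger constant, through a weighted interpolation inequality followed by a one–line absorption. As in the proof of Proposition \ref{RI}, a density argument reduces everything to smooth sections $u$ compactly supported in $\Omega$.

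The key step is the weighted interpolation inequality: for every $\epsilon>0$ there is a constant $C_\epsilon$ such that
\be\label{interp}
\|u\|_{\zH^{m-1}_{\phi,\psi}}\;\le\;\epsilon\,\|u\|_{\zH^{m}_{\phi,\psi}}+C_\epsilon\,\|u\|_{\Lpsi}
\ee
for all such $u$. This is the analogue, in the spaces of section \ref{SwSs}, of the classical interpolation inequality between Sobolev norms, and the paper has already flagged that ``a similar equivalence is also true in our weighted Sobolev spaces''. I would obtain \eqref{interp} by iterating the elementary Peter--Paul inequality between two consecutive orders, namely $\|\phi^{j}\nabla^{(j)}u\|_{\Lpsi}\le\epsilon\|\phi^{j+1}\nabla^{(j+1)}u\|_{\Lpsi}+C_\epsilon\|\phi^{j-1}\nabla^{(j-1)}u\|_{\Lpsi}$ (which comes from one integration by parts and Cauchy--Schwarz, using $\phi^{2j}=\phi^{j-1}\phi^{j+1}$; the terms created by differentiating the weight $\phi^{2j}\psi^{2}$ are of strictly lower order and are absorbed), and then chaining the resulting bounds from order $m-1$ down to order $0$. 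Alternatively, and more in the spirit of these spaces, one localises on balls of radius comparable to $\phi(x)$, on which $\psi$ is essentially constant and $\phi$ plays the role of the length scale; rescaling such a ball to unit size reduces \eqref{interp} to the ordinary unweighted interpolation inequality and summation over a covering gives the claim (cf. the appendix of \cite{CD2}). A third route is to set $u=\phi^{m}\psi w$, exactly as in the proof of Proposition \ref{RI}, and invoke the standard interpolation inequality for $w$, the lower order terms being controlled as explained there.

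Granting \eqref{interp}, the proposition follows at once. Inserting \eqref{interp} into \eqref{RIC}, with $C$ the constant of Proposition \ref{RI},
\beaa
\|u\|_{\Lpsim}&\le&C\|\phi^{m}P^*u\|_{\Lpsi}+C\|u\|_{\zH^{m-1}_{\phi,\psi}}\\
&\le&C\|\phi^{m}P^*u\|_{\Lpsi}+C\epsilon\,\|u\|_{\zH^{m}_{\phi,\psi}}+CC_\epsilon\|u\|_{\Lpsi}.
\eeaa
Since $\|u\|_{\zH^{m}_{\phi,\psi}}=\|u\|_{\Lpsim}$, choosing $\epsilon=1/(2C)$ lets the middle term be absorbed into the left-hand side, giving $\tfrac12\|u\|_{\Lpsim}\le C\|\phi^{m}P^*u\|_{\Lpsi}+CC_\epsilon\|u\|_{\Lpsi}$, which is \eqref{RIC2} after renaming the constant.

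The hard part is genuinely only \eqref{interp}; everything else is routine absorption. Within that step the delicate point is to check that the boundary weights $\phi=x^{2}$ and $\psi=x^{2(a-n/2)}e^{-s/x}$ do not spoil the scaling — that $\psi$ is comparable to a constant and $\phi$ to a linear function on each ball of radius $\sim\phi(x)$ near $\partial\Omega$, and that the commutator terms arising when these weights are differentiated are of lower order — which is precisely the type of bookkeeping recorded for these spaces in \cite{CD2}.
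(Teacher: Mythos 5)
Your proof is correct and follows essentially the same route as the paper: apply the weighted interpolation inequality $\|u\|_{\zH^{m-1}_{\phi,\psi}}\leq \epsilon \|u\|_{\zH^{m}_{\phi,\psi}}+C''\|u\|_{L^2_\psi}$ (which the paper proves in its appendix by exactly the localize-on-balls-of-radius-$\sim\phi$-and-rescale argument you describe as your second route) to \eqref{RIC} and absorb the $\epsilon$-term.
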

\begin{proof}
By the interpolation inequality (recalled at the end of the appendix), for any positive constant $\epsilon$, there exist $C''>0$ such that
for all $u\in\zH^{m}_{\phi,\psi}$,
\be\label{InterpInequ}
\|u\|_{\zH^{m-1}_{\phi,\psi}}\leq \epsilon \|u\|_{\zH^{m}_{\phi,\psi}}+C''\|u\|_{L^2_\psi}.
 \ee
This inequality combined with \eq{RIC} proves the announced result.
\end{proof}

\begin{lemma}
 Let us assume that $P$ satisfies (API). Let $V$ be a relatively compact open subset of $\Omega$ containing $K$.   Then there exist{s} a constant $C'$ such that for all $u\in\zH^{m}_{\phi,\psi}$,
\bel{eL6}
C'\left(\|\phi^mP^*(u)\|_{\Lpsi}+\|u\|_{L^2(\overline V)}\right) \geq
\|u\|_{\Lpsim}.\ee
In particular the map $$\phi^m P^*:
\Lpsim\longrightarrow
{\Lpsi}
$$
has finite dimensional kernel.
\end{lemma}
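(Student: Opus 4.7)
My plan is to upgrade the estimate of Proposition \ref{RI2} by replacing $\|u\|_{L^2_\psi}$ on the right-hand side with the localized $\|u\|_{L^2(\overline V)}$. Thanks to \eq{RIC2}, it suffices to prove
\[
\|u\|_{L^2_\psi}\;\leq\;C\bigl(\|\phi^m P^* u\|_{L^2_\psi}+\|u\|_{L^2(\overline V)}\bigr),
\]
because inserting this intermediate bound into \eq{RIC2} immediately yields \eq{eL6}.

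To prove the intermediate inequality I would use a cutoff argument that brings (API) into play. Choose $\chi\in C_c^\infty(\Omega)$ with $\chi\equiv 1$ on a neighborhood of $K$, $\mathrm{supp}(\chi)\subset V$, and $W:=\mathrm{supp}(\nabla\chi)$ relatively compact in $V$. Split $u=\chi u+(1-\chi)u$: the first piece is supported in $\overline V$, where $\psi$ is bounded above, so $\|\chi u\|_{L^2_\psi}\leq C_V\|u\|_{L^2(\overline V)}$. The second piece lies in $\zH^m_{\phi,\psi}$ and is supported in $\Omega\setminus K$; approximating $u$ by $C_c^\infty(\Omega)$ sections and using the density definition of $\zH^m_{\phi,\psi}$ extends (API) from smooth sections to $(1-\chi)u$, and expanding $P^*((1-\chi)u)=(1-\chi)P^*u+[P^*,(1-\chi)]u$ gives
\[
\|(1-\chi)u\|_{L^2_\psi}\leq C\|\phi^m P^* u\|_{L^2_\psi}+C\|\phi^m[P^*,(1-\chi)]u\|_{L^2_\psi}.
\]

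The main obstacle is the commutator term. The operator $[P^*,(1-\chi)]$ has order at most $m-1$ and its smooth coefficients are compactly supported in $W$; since $\phi,\psi$ are bounded on $W$ one gets $\|\phi^m[P^*,(1-\chi)]u\|_{L^2_\psi}\leq C'\|u\|_{H^{m-1}(W)}$. This $H^{m-1}$ norm is stronger than the allowed $L^2(\overline V)$ norm, so I would close the estimate by invoking the interior elliptic regularity available because $P^*$ has injective principal symbol: applying the semi-Fredholm inequality from the lemma in the proof of Proposition \ref{RI} to $\eta u$ for a cutoff $\eta\equiv 1$ on $W$ with $\mathrm{supp}(\eta)\subset V$, and iteratively eliminating the leftover $H^{m-1}$ remainder via standard interpolation in unweighted Sobolev spaces, yields
\[
\|u\|_{H^m(W)}\;\leq\;C''\bigl(\|P^* u\|_{L^2(V)}+\|u\|_{L^2(V)}\bigr)\;\leq\;C'''\bigl(\|\phi^m P^* u\|_{L^2_\psi}+\|u\|_{L^2(\overline V)}\bigr),
\]
the final step using that $\phi,\psi$ are bounded above and below on $\overline V$. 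Combining the three displays proves the improved $L^2_\psi$ estimate and hence \eq{eL6}.

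For the finite-dimensional kernel assertion: on $\ker(\phi^m P^*)$ the estimate \eq{eL6} reduces to $\|u\|_{\Lpsim}\leq C'\|u\|_{L^2(\overline V)}$, so on the kernel the $\Lpsim$-norm and the $L^2(\overline V)$-norm are equivalent. Composing the continuous inclusion $\Lpsim\hookrightarrow H^m(V)$ (since the weights are bounded above and below on $\overline V$) with the compact Rellich embedding $H^m(V)\hookrightarrow L^2(V)$ produces a compact embedding $\Lpsim\hookrightarrow L^2(\overline V)$. Restricted to the kernel, this compact embedding becomes a topological isomorphism onto its image, so the identity map on $\ker(\phi^m P^*)$ is compact, forcing finite dimensionality.
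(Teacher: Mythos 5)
Your proof is correct and is essentially the classical argument the paper relies on: the paper disposes of this lemma by citing the proof of Lemma 4.10 in \cite{Lee:fredholm}, which is precisely your scheme --- a cutoff splitting $u=\chi u+(1-\chi)u$, (API) applied (by density) to the piece supported away from $K$, an interior elliptic estimate to control the commutator region, and Rellich--Kondrachov compactness on the relatively compact set to conclude that the kernel is finite dimensional. The only step to phrase more carefully is the interior bound $\|u\|_{H^m(W)}\leq C\left(\|P^*u\|_{L^2(V)}+\|u\|_{L^2(V)}\right)$: a single interpolation does not absorb the commutator remainder, since it is supported on a set larger than $W$, but this bound is the standard interior estimate for operators with injective principal symbol (obtained via a left parametrix, or by nested cutoffs together with the a priori finiteness of $\|u\|_{H^m(V)}$ which legitimizes the absorption), so your argument goes through.
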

\begin{proof}
This is now a classical argument, see \cite{Lee:fredholm} proof of lemma 4.10 for instance.
\end{proof}

%

\begin{proposition}\label{P:estproj}
Let
${\mathcal K   }$ be the kernel of
$$\phi^m P^*:
\Lpsim\longrightarrow
{\Lpsi},$$ and let ${\mathcal K
}^{\bot}$ be its $\Lpsi$-orthogonal.
If $P^*$ satisfies (API).
 then there exists
a constant $C"$ such that
for all $u\in {\mathcal K
}^{\bot} \cap
\Lpsim$ we have
\bel{broker}
C"\|\phi^mP^*(u)\|_{\Lpsi} \geq
\|u\|_{\Lpsim }\;.
\ee
\end{proposition}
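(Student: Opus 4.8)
The plan is to derive \eq{broker} from the lemma with the extra $L^2(\overline V)$ term (estimate \eq{eL6}) by a standard compactness-contradiction argument, using that on $\maclKzo$ the kernel term can be absorbed. First I would record what \eq{eL6} gives after restricting to $\maclKzo\cap\Lpsim$: for all such $u$,
$$
C'\left(\|\phi^mP^*(u)\|_{\Lpsi}+\|u\|_{L^2(\overline V)}\right)\geq \|u\|_{\Lpsim}.
$$
So it suffices to show that the term $\|u\|_{L^2(\overline V)}$ can be dropped at the cost of enlarging the constant, i.e. that there is a constant with $C''\|\phi^mP^*(u)\|_{\Lpsi}\geq\|u\|_{\Lpsim}$ on $\maclKzo\cap\Lpsim$.

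I would argue by contradiction: suppose no such constant exists. Then there is a sequence $u_j\in\maclKzo\cap\Lpsim$ with $\|u_j\|_{\Lpsim}=1$ and $\|\phi^mP^*(u_j)\|_{\Lpsi}\to 0$. By \eq{eL6} applied to $u_j$, the sequence is bounded in $\Lpsim$ and moreover $\liminf\|u_j\|_{L^2(\overline V)}>0$; say $\|u_j\|_{L^2(\overline V)}\geq c_0>0$ for large $j$. Since $\overline V$ is a relatively compact subset of $\Omega$ on which the weights $\phi,\psi$ are bounded above and below by positive constants, the embedding $\Lpsim\hookrightarrow L^2(\overline V)$ factors through $H^m(V')\hookrightarrow L^2(\overline V)$ for a slightly larger relatively compact $V'$, which is compact by Rellich. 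Hence after passing to a subsequence $u_j\to u_\infty$ strongly in $L^2(\overline V)$, and (extracting further) weakly in $\Lpsim$, with $u_\infty\in\Lpsim$; the weak limit still lies in the closed subspace $\maclKzo$. Lower semicontinuity of the norm under weak convergence gives $\phi^mP^*(u_\infty)=0$ in $\Lpsi$ (since $\|\phi^mP^*(u_j)\|_{\Lpsi}\to0$ and $\phi^mP^*$ is continuous $\Lpsim\to\Lpsi$, so weak convergence of $u_j$ forces $\phi^mP^*(u_j)\rightharpoonup\phi^mP^*(u_\infty)$), so $u_\infty\in\mathcal K$. But $u_\infty\in\maclKzo$ as well, hence $u_\infty=0$. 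This contradicts $\|u_\infty\|_{L^2(\overline V)}=\lim\|u_j\|_{L^2(\overline V)}\geq c_0>0$.

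The only genuinely delicate point is the compactness step, namely that $\Lpsim\hookrightarrow L^2(\overline V)$ is compact: one must check that elements of $\Lpsim$ restrict to $H^m$ sections on a neighbourhood of $\overline V$ inside $\Omega$, with a uniform bound, so that Rellich–Kondrachov applies on that fixed relatively compact piece. This is immediate from the definition of the weighted norm \eq{defHn} because $\phi=x^2$ and $\psi=x^{2(a-n/2)}e^{-s/x}$ are smooth and bounded between two positive constants on any compact subset of $\Omega$, so $\|u\|_{H^m(V')}\leq C_{V'}\|u\|_{\Lpsim}$; I would state this comparison explicitly and then invoke Rellich. Everything else — boundedness of the sequence, weak compactness in the Hilbert space $\Lpsim$, closedness of $\maclKzo$ under weak limits, and continuity of $\phi^mP^*$ — is routine Hilbert-space functional analysis. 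Note that finite-dimensionality of $\mathcal K$ from the preceding lemma is what makes $\maclKzo$ a genuine orthogonal complement with $\Lpsim=\mathcal K\oplus(\maclKzo\cap\Lpsim)$, though it is not strictly needed for the contradiction argument itself.
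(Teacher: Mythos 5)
Your proof is correct and takes essentially the same approach as the paper: the paper sketches exactly this compactness-contradiction argument (sequence of unit norm with $\|\phi^m P^*(u_n)\|_{\Lpsi}\to 0$, Rellich--Kondrachov on a relatively compact set, \eqref{eL6} to obtain a contradiction with injectivity on $\maclKzo$). You have merely filled in the details that the paper leaves as ``a standard argument.''
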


\begin{proof} This is  a standard argument,
compare~\cite{choquet-bruhat:christodoulou:elliptic,AndElli,Lee:fredholm}:
assuming that the inequality fails,  there is a sequence
$(u_n)\in \zH^m_{\phi,\psi}\cap {\mathcal K  }^{\bot} $ with norm
$1$ such that $\|\phi^mP^*( u_n)\|_{\Lpsi}$ approaches
zero as $n$ tends to infinity. One obtains a contradiction with
injectivity on $\zH^m_{\phi,\psi}\cap {\mathcal K }^{\bot} $ by using
the Rellich-Kondrakov compactness on a relatively compact open
set, applying \eq{eL6}.
\end{proof}

\begin{remark}
As in \cite{Lee:fredholm} proof of lemma 4.1, on can also show that the map
$$\phi^m P^*:
\Lpsim\longrightarrow
{\Lpsi},$$
has closed range under (API). Also, by adapting the same proof, one shows that
 this map is semi-Fredholm iff (API) holds.
\end{remark}

Set
$$ {\mathcal L}_{\phi,\psi}:=
\psi^{-2} P\psi^2\phi^{2m} P^*
\;.$$

 We denote by $\pi_{\maclKzo }$ the $L^2_\psi$ projection
onto $\maclKzo $. We are now ready to prove:

\begin{theorem}\label{iso}
Let $k\ge 0$, and assume that (API)  {holds}. Then
 the map \bel{isoeq} \pi_{\maclKzo } {\mathcal L}_{\phi,\psi}
:{\maclKzo }\cap  \Hpsi{k+2m}
\longrightarrow {\maclKzo }\cap
\Hpsi{k} \ee is an  isomorphism.
\end{theorem}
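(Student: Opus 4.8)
The plan is to prove that $\pi_{\maclKzo}\mcL_{\phi,\psi}$ is an isomorphism between the indicated spaces by establishing injectivity and surjectivity separately, with the $k=0$ case carrying the analytic content and higher $k$ following by elliptic regularity in the weighted scale. First I would unwind the definition: for $u\in\maclKzo\cap\Hpsi{k+2m}$ one has $\mcL_{\phi,\psi}u=\psi^{-2}P\psi^2\phi^{2m}P^*u$, and the natural $L^2_\psi$ pairing gives, for $u,v$ smooth and compactly supported in $\Omega$,
\[
\langle \mcL_{\phi,\psi}u,v\rangle_{\Lpsi}=\langle \psi^2\phi^{2m}P^*u,P^*v\rangle_{L^2}=\langle \phi^{m}P^*u,\phi^{m}P^*v\rangle_{\Lpsi},
\]
so $\mcL_{\phi,\psi}$ is formally self-adjoint and non-negative on $\Lpsi$, with $\langle\mcL_{\phi,\psi}u,u\rangle_{\Lpsi}=\|\phi^mP^*u\|_{\Lpsi}^2$. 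This identity is the hinge: it shows $\ker\mcL_{\phi,\psi}\supseteq$ (and in fact equals, after the density/closure argument of Proposition \ref{RI}) the space $\maclKzo^{\perp}$'s complement, i.e. $\mcL_{\phi,\psi}u=0$ with $u\in\maclKzo$ forces $\phi^mP^*u=0$, hence $u\in\mcK$, hence $u=0$. Combined with the coercivity estimate \eqref{broker} from Proposition \ref{P:estproj}, namely $\|u\|_{\Lpsim}\le C''\|\phi^mP^*u\|_{\Lpsi}$ on $\maclKzo\cap\Lpsim$, injectivity of the map on $\maclKzo\cap\Hpsi{2m}$ is immediate, and moreover one gets the a priori bound $\|u\|_{\Hpsi{2m}}\le C\|\mcL_{\phi,\psi}u\|_{\Lpsi}$ after also invoking interior elliptic estimates for the order-$2m$ operator $\psi^2\phi^{2m}P^*$ paired back through $P$ (i.e. Proposition \ref{RI2} applied to the composite, or a direct weighted elliptic estimate).

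For surjectivity at $k=0$ I would use the standard Lax–Milgram / variational argument on the Hilbert space $\mcH:=\maclKzo\cap\Lpsitwom$ — strictly, one works with the bilinear form $a(u,v)=\langle\phi^mP^*u,\phi^mP^*v\rangle_{\Lpsi}$ on $\maclKzo\cap\Lpsim$, which by \eqref{broker} is coercive with respect to the $\Lpsim$ norm and is clearly bounded. Given $f\in\maclKzo\cap\Lpsi$, the functional $v\mapsto\langle f,v\rangle_{\Lpsi}$ is bounded on $(\maclKzo\cap\Lpsim,\|\cdot\|_{\Lpsim})$, so Lax–Milgram produces a unique $u\in\maclKzo\cap\Lpsim$ with $a(u,v)=\langle f,v\rangle_{\Lpsi}$ for all such $v$. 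Testing against smooth compactly supported $v$ and integrating by parts shows $\pi_{\maclKzo}\mcL_{\phi,\psi}u=f$ in the distributional sense; this is the weak solution. The map is therefore onto $\maclKzo\cap\Lpsi$ at the level of weak solutions, and injectivity plus the bound above upgrade this to a bounded bijection once regularity is in hand.

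The main obstacle — the step I expect to require the most care — is the elliptic regularity bootstrap in the \emph{weighted} spaces: one must show that a weak solution $u\in\maclKzo\cap\Lpsim$ of $\pi_{\maclKzo}\mcL_{\phi,\psi}u=f$ with $f\in\maclKzo\cap\Hpsi{k}$ actually lies in $\Hpsi{k+2m}$, with the corresponding estimate, so that \eqref{isoeq} makes sense as a map between the claimed spaces and is an isomorphism (open mapping theorem then finishes it). Interior regularity is classical since $P\psi^2\phi^{2m}P^*$ is elliptic of order $2m$ with smooth coefficients away from $\partial\Omega$; the delicate point is uniform control up to the boundary, where the weights $\phi=x^2$ and $\psi=x^{2(a-n/2)}e^{-s/x}$ degenerate/blow up. Here the strategy is the one already used implicitly in Propositions \ref{RI}–\ref{RI2}: substitute $u=\phi^m\psi w$ to convert weighted estimates into ordinary ones, commute derivatives past the weight picking up only lower-order terms controlled by the appendix computations, and combine with \eqref{RIC2} and interpolation \eqref{InterpInequ}; the projection $\pi_{\maclKzo}$ is harmless for regularity since $\mcK$ is finite-dimensional and (by (KRC)-type reasoning) consists of smooth-enough sections. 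Iterating in $k$ gives the full scale, and the a priori estimate $\|u\|_{\Hpsi{k+2m}}\le C(\|\mcL_{\phi,\psi}u\|_{\Hpsi{k}}+\|u\|_{\Lpsi})$ together with injectivity yields that \eqref{isoeq} is a continuous bijection with continuous inverse.
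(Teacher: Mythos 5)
Your proof is essentially the same as the paper's: you establish injectivity from the quadratic-form identity $\langle\mathcal L_{\phi,\psi}u,u\rangle_{\Lpsi}=\|\phi^mP^*u\|^2_{\Lpsi}$ together with the coercivity estimate \eqref{broker}, you produce a weak solution from coercivity, and you upgrade regularity by weighted elliptic estimates. The only cosmetic difference is that the paper minimizes the convex coercive functional ${\mathcal F}(u)=\int_M(\tfrac12|\phi^mP^*u|^2_g-\langle u,f\rangle_g)\psi^2\,d\mu_g$ via the Ekeland--Temam existence theorem and reads off the Euler--Lagrange equation, while you invoke Lax--Milgram with the bilinear form $a(u,v)=\langle\phi^mP^*u,\phi^mP^*v\rangle_{\Lpsi}$; for this symmetric coercive form the two are interchangeable. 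One small sharpening you should make explicit: the variational identity you obtain on $\maclKzo\cap\Lpsim$ extends to test functions $w\in\Lpsim$ that are not in $\maclKzo$ only because $f\perp\mathcal K$ and $P^*w=0$ for the $\mathcal K$-component of $w$, which is exactly the paper's parenthetical remark before \eqref{p16}; without this observation you cannot conclude that $u$ is a weak solution of the unprojected degenerate elliptic equation, which is what the regularity machinery from the appendix (and Morrey) is applied to. You correctly identify the weighted-scaling regularity bootstrap as the technically loaded step, and your injectivity argument, though phrased for $\mathcal L_{\phi,\psi}u=0$ rather than $\pi_{\maclKzo}\mathcal L_{\phi,\psi}u=0$, is sound once one pairs against $u\in\maclKzo$ so that the projection drops out, as in \eqref{orjus}.
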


 \begin{proof}
For $f\in {\maclKzo }\cap{\Lpsi}$,
let $\mathcal F$ be the following continuous functional defined
on ${\maclKzo }\cap \Lpsim$:
$$
{\mathcal F}(u):=\int_M (\frac{1}{2}|\phi^mP^*( u)|^2_g -\langle
u,f\rangle _g)\psi^2 d\mu_g\;;
$$
we set
$$
\mu_F=\inf_{u\in {\maclKzo }\cap \Lpsim}{\mathcal F}(u)\;.
$$
We claim that ${\mathcal F}$ is coercive: indeed,
Proposition~\ref{P:estproj} and the Schwarz inequality give
$$
\begin{array}{lll}
{\mathcal F}(u)&\geq& C(\|u\|_{\Lpsim })^2-
\|u\|_{\Lpsi}\|f\|_{\Lpsi}\\
&\geq & C(\|u\|_{\Lpsim })^2-
\|u\|_{\Lpsim }\|f\|_{\Lpsig}\;.
\end{array}
$$
Standard results on convex, proper, coercive, l.s.c.\ ({\em cf.,
e.g.,}~\cite[Proposition~1.2, p.~35]{EkelandTemam}) functionals
 show that $\mu_F$ is
achieved by some $u\in {\maclKzo }\cap \zH^m_{\phi,\psi}$ satisfying (the equality being trivial
for $w\in \mathcal K$)
\bel{p16}
\forall  w\in
 \zH^m_{\phi,\psi},
\int_M \left(\langle \phi^mP^*u,\phi^mP^*w\rangle
_g -\langle f,w\rangle
_g\right)\psi^2 d\mu_g=0.
\ee
It follows that  $u\in {\maclKzo }\cap \zH^m_{\phi,\psi}$ is a weak
solution of the equation
$$\psi^{-2} P\psi^2 \phi^{2m}P^*u=f.$$
The variational equation \eq{p16} satisfies the hypotheses of
\cite[Section~6.4, pp.~242-243]{Morrey}. By
elliptic regularity \cite[Theorem~6.4.3, p.~246]{Morrey} and by
standard scaling arguments ({\em cf.\/} appendix B of \cite{CD2}) for $f\in \zH^{k}_{\phi,\psi}$, we have $u\in \zH^{k+2m}_{\phi,\psi}$: surjectivity follows. To prove bijectivity,
we note that  the operator $\pi_{\maclKzo }{\mathcal
L}_{\phi,\psi}$ is injective: indeed, if $u\in{\maclKzo }$ is
in the kernel of $\pi_{\maclKzo } {\mathcal L}_{\phi,\psi}$, then
\bel{orjus}0=\langle {\mathcal
L}_{\phi,\psi}(u),u\rangle _{\Lpsi} =\langle
\phi^mP^*(u),\phi^mP^*(u)\rangle _{\Lpsi}\;, \ee
so $u=0$ from inequality \eq{broker}.
\end{proof}


\section{Regularity}
From uniform ellipticity of $L=PP^*$ and scaling properties (see appendix), there exists a constant $C$ such that for
all $u$ in $\zH^{2m}_{\phi,\psi}$
satisfying $\mathcal L_{\phi,\psi}(u)\in C^{k,\alpha}_{\phi,\varphi}$ we have $u\in
C^{k+2m,\alpha}_{\phi,\varphi}$
with
\bel{erc}
\|u\|_{C^{k+2m,\alpha}_{\phi,\varphi}}\leq C \left(\| \mathcal L_{\phi,\psi}u\|_{
C^{k,\alpha}_{\phi,\varphi}}
+\|u\|_{\zH^{2m}_{\phi,\psi}}\right)\;.
\ee
We {so} obtain the
\begin{proposition}\label{regul}
If  $\mathcal L_{\phi,\psi}u=f$ with
$f\in
C^{k,\alpha}_{\phi,\varphi}\cap \zH^{0}_{\phi,\psi}\;, $ and $u\in H^{2m}_{\phi,\psi}$
 then
$u\in
C^{k+2m,\alpha}_{\phi,\varphi}$, {so that} $U=\psi^2\phi^{2m}P^*u\in
\psi^2\phi^{m} C^{k+m,\alpha}_{\phi,\varphi}\;.$
\end{proposition}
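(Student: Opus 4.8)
The statement is essentially a repackaging of the weighted Schauder estimate \eqref{erc}, so the plan is short. First I would note that the hypotheses of \eqref{erc} are met: from $\mathcal L_{\phi,\psi}u=f$ and $f\in C^{k,\alpha}_{\phi,\varphi}$ we get $\mathcal L_{\phi,\psi}u\in C^{k,\alpha}_{\phi,\varphi}$, and the right-hand side of \eqref{erc} is finite because $u\in H^{2m}_{\phi,\psi}$ (the norm $\|\cdot\|_{\zH^{2m}_{\phi,\psi}}$ is simply the one induced from $H^{2m}_{\phi,\psi}$, and \eqref{erc} itself --- being obtained by rescaling balls of radius comparable to $\phi$ to unit size and invoking interior Schauder estimates there --- uses the equation only locally and is therefore insensitive to whether $u$ lies in the closed subspace $\zH^{2m}_{\phi,\psi}$ or merely in $H^{2m}_{\phi,\psi}$; alternatively one may first localize and mollify to reduce to $u\in\zH^{2m}_{\phi,\psi}$, the extra hypothesis $f\in\zH^0_{\phi,\psi}$ guaranteeing that the equation holds in $L^2_\psi$ so that $\mathcal L_{\phi,\psi}u$ really is the function $f$ to which \eqref{erc} is applied). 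Applying \eqref{erc} then gives at once $u\in C^{k+2m,\alpha}_{\phi,\varphi}$.

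It then remains to identify $U:=\psi^2\phi^{2m}P^*u=\psi^2\phi^m\,(\phi^mP^*u)$, for which I would check that $\phi^mP^*u\in C^{k+m,\alpha}_{\phi,\varphi}$. Since $P^*$ has order $m$ with coefficients smooth up to $\overline\Omega$, $\phi^mP^*u$ is a finite sum of terms $c\,\phi^m\nabla^{(q)}u$ with $c$ a smooth bundle map on $\overline\Omega$ and $q\le m$. Differentiating such a term $i\le k+m$ further times by Leibniz produces terms $(\nabla^{(a)}c)(\nabla^{(b)}(\phi^m))(\nabla^{(q+d)}u)$ with $a+b+d=i$; weighting by $\varphi\phi^i$ as in the definition of the $C^{k+m,\alpha}_{\phi,\varphi}$ norm and factoring out the bounded quantity $\varphi\phi^{q+d}\nabla^{(q+d)}u$ (legitimate since $q+d\le m+(k+m)=k+2m$), the remaining scalar factor is $\nabla^{(a)}c$ times $\phi^i(\nabla^{(b)}(\phi^m))\phi^{-(q+d)}$. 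Because $\phi=x^2$ one has $\nabla^{(b)}(\phi^m)=O(\phi^{\max\{m-b/2,\,0\}})$ and bounded on $\overline\Omega$, and a short exponent count (using $a+b+d=i$ and $q\le m$) shows $\phi^i(\nabla^{(b)}(\phi^m))\phi^{-(q+d)}$ to be a nonnegative power of $\phi$, hence bounded on $\overline\Omega$. The same accounting handles the Hölder seminorm, and the computation is exactly of the type recorded in appendix B of \cite{CD2}. Thus $\phi^mP^*u\in C^{k+m,\alpha}_{\phi,\varphi}$, so $U\in\psi^2\phi^m C^{k+m,\alpha}_{\phi,\varphi}$; note in passing that $PU=\psi^2 f$.

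The proof has no genuine obstacle once \eqref{erc} is in hand: the only point deserving a sentence of care is the justification of \eqref{erc} for $u$ merely in $H^{2m}_{\phi,\psi}$ rather than in the closed subspace $\zH^{2m}_{\phi,\psi}$ in which solutions are produced by Theorem \ref{iso}, and everything else is the standard weighted-scaling bookkeeping collected in the appendix.
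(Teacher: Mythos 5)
Your argument is correct and is essentially the paper's: Proposition \ref{regul} is read off from the weighted Schauder estimate \eqref{erc} (established by rescaling to unit balls and invoking interior Schauder, as in the appendix), and the final membership $U\in\psi^2\phi^m C^{k+m,\alpha}_{\phi,\varphi}$ follows from the bookkeeping showing $\phi^m P^*$ maps $C^{k+2m,\alpha}_{\phi,\varphi}$ into $C^{k+m,\alpha}_{\phi,\varphi}$, which is exactly the type of computation collected in the appendix (\eqref{0.1}--\eqref{mpp}). You are also right, and it is a point the paper glosses over, that \eqref{erc} is stated for $u\in\zH^{2m}_{\phi,\psi}$ while the proposition assumes only $u\in H^{2m}_{\phi,\psi}$; since the derivation of \eqref{erc} is purely local it holds verbatim in the larger space, as you say.
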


\begin{remark}
The quantity  $\varphi\phi^m\nabla^{(m)}u$ is bounded. {This implies that} $U$ is bounded by a constant times $\psi^2\varphi^{-1}\phi^m=x^{2(a-n+m)}e^{-s/x}$.
So, when $s>0$, $u$ might blow up at the boundary but $U$ vanishes on it,
and the same is true for the derivatives.
\end{remark}

Choose some $\alpha>0$ and  define the Fr\'echet {space}
$C^{\infty}_{\phi,\varphi}$ as the collection of all functions
or tensor fields which are in $C^{k,\alpha}_{\phi,\varphi}$
whatever $k\in\N$, equipped with the family of semi-norms
$\{\|\cdot\|_{C^{k,\alpha}_{\phi,\varphi}},k\in\N\}$. We then
have:
\begin{corollary}\label{Cinfty}
Under the hypotheses of the  proposition \ref{regul}, if $f\in C^{\infty}_{\phi,\varphi}$, then $u$
is in  $C^{\infty}_{\phi,\varphi}$ so that $ U\in \psi^2\phi^{m}C^{\infty}_{\phi,\varphi}$. In particular if $s>0$ (in the definition
of $\varphi$ and $\psi$) then
$$U \in C^\infty(\overline\Omega)\;, $$ and  $U$  can be smoothly
extended by zero across $\partial \Omega$.
\end{corollary}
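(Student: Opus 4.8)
The plan is to deduce the corollary from Proposition~\ref{regul} by letting the order of differentiability run over all of $\N$, and then, in the case $s>0$, to convert the weighted H\"older control of $U$ into the classical statement that $U$ extends smoothly by zero across $\partial\Omega$. First I would fix $k\in\N$: since $f\in C^\infty_{\phi,\varphi}$ means precisely that $f\in C^{k,\alpha}_{\phi,\varphi}$ for every $k$, and $f\in\zH^0_{\phi,\psi}$ together with $u\in H^{2m}_{\phi,\psi}$ are part of the standing hypotheses of Proposition~\ref{regul}, that proposition yields $u\in C^{k+2m,\alpha}_{\phi,\varphi}$ and $U=\psi^2\phi^{2m}P^*u\in\psi^2\phi^m C^{k+m,\alpha}_{\phi,\varphi}$. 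As $k$ was arbitrary, by the very definition of the Fr\'echet space $C^\infty_{\phi,\varphi}$ we obtain $u\in C^\infty_{\phi,\varphi}$ and $U\in\psi^2\phi^m C^\infty_{\phi,\varphi}$, which is the first assertion.

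Now assume $s>0$. Smoothness of $U$ in the interior of $\Omega$ is immediate, because $C^\infty_{\phi,\varphi}\subset C^\infty(\Omega)$ and $P^*$ has smooth coefficients. For the behaviour at the boundary I would write $U=\psi^2\phi^m v$ with $v\in C^\infty_{\phi,\varphi}$, recall that $\psi^2\phi^m=x^{2(2a-n+m)}e^{-2s/x}$ and that membership of $v$ in $C^{i,\alpha}_{\phi,\varphi}$ bounds $|\nabla^{(i)}v|$ by a constant times $(\varphi\phi^i)^{-1}=x^{-2(a+i)}e^{s/x}$, and then apply the Leibniz rule: every $\nabla^{(i)}U$ is a finite sum of terms (smooth coefficient)$\cdot\nabla^{(i-j)}(\psi^2\phi^m)\cdot\nabla^{(j)}v$, and differentiating $e^{-2s/x}$ only produces further negative powers of $x$. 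Hence, near $\partial\Omega$, $\nabla^{(i)}U$ is bounded by a constant times $x^{-N}e^{-s/x}$ for some $N=N(i)$ (for $i=0$ one recovers the bound $x^{2(a-n+m)}e^{-s/x}$ of the remark preceding the corollary). Since $x^{-N}e^{-s/x}\to0$ as $x\to0^+$ for every $N$, and since the H\"older part of the norms $\|\cdot\|_{C^{k,\alpha}_{\phi,\varphi}}$ (equivalently an interior Schauder estimate with the scaling of the appendix) makes these bounds uniform, each derivative $\nabla^{(i)}U$ extends continuously by $0$ up to $\overline\Omega$; thus $U\in C^\infty(\overline\Omega)$.

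To finish, since $\partial\Omega$ is a smooth hypersurface and $U$ together with all of its derivatives vanishes on it, the function equal to $U$ on $\overline\Omega$ and to $0$ on $M\setminus\Omega$ has vanishing one-sided jet from each side at every boundary point, is $C^\infty$ on $\Omega$ and on the interior of $M\setminus\overline\Omega$, hence is $C^\infty$ on all of $M$; this is the asserted smooth extension by zero. I do not expect a genuine obstacle here: the corollary is almost a restatement of Proposition~\ref{regul} iterated over $k$, combined with the remark before it, the only point needing a little care being that the Leibniz estimate above be made uniform, so as to obtain continuity — not merely boundedness — of the derivatives of $U$ up to $\partial\Omega$.
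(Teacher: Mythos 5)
Your proposal is correct and follows the argument the paper leaves implicit: iterating Proposition~\ref{regul} over $k$ to pass to the Fr\'echet space $C^\infty_{\phi,\varphi}$, and then using the factor $\psi^2\phi^m$ with $s>0$ — whose derivatives of any order are $O(x^{-N}e^{-s/x})$ and hence decay to zero at the boundary faster than any power of $x$ — to control each $\nabla^{(i)}U$ via Leibniz and the pointwise bounds $|\nabla^{(j)}v|\lesssim(\varphi\phi^j)^{-1}$ built into the $C^{j,\alpha}_{\phi,\varphi}$ norm. The point you flag yourself is handled adequately: the uniform (norm-dependent) constant in the pointwise bound, together with $x^{-N}e^{-s/x}\to 0$, gives continuous vanishing of every derivative at $\partial\Omega$, and matching one-sided jets across the smooth hypersurface $\partial\Omega$ then yields the $C^\infty(M)$ extension by zero. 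This is the same route the paper intends.
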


\section{Compactly supported solutions}\label{secpuf}
In this section we would like to point out the result about compactly supported solutions
of
\bel{puf}
PU=f
\ee when the source $f$ is of compact support (see \cite{Qiu} for a related result when $P$
is the divergence operator acting on vector fields).

Let $\Omega$ be an open set of $M$ with compact closure and smooth boundary. Let $f\in C^\infty(\Omega)$ be a  source with
compact support in $\Omega$. We want to find
a   solution $U\in C^\infty(\overline\Omega)$ of \eq{puf}, vanishing {  at} any order on $\partial\Omega$.
In particular, $U$ can be smoothly extended by zero across $\partial \Omega$.
We assume that
\bel{CNf}
\int_{\Omega}\langle v,f\rangle=0,
\ee
for all $v\in \ker P^*$.

This is an obvious necessary assumption.
\begin{theorem}\label{theorpuf}
If $P^*$ satisfies (API) for some $s>0$ and (KRC) holds then  there exists a solution $U\in C^\infty(\overline \Omega)$ of \eq{puf}, which vanishes {  at} any
order on $\partial\Omega$.
\end{theorem}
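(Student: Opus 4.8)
The plan is to combine the isomorphism theorem (Theorem~\ref{iso}) with the regularity statement (Corollary~\ref{Cinfty}) and then handle the kernel projection separately, using (KRC) to kill a boundary term. We fix a defining function $x$ as in Section~\ref{SwSs}, choose some $s>0$, and set $\phi=x^2$, $\psi=x^{2(a-n/2)}e^{-s/x}$, $\varphi=x^{2a}e^{-s/x}$ for a suitable $a\in\N$ (large enough that the given compactly supported $f$ lies in $L^2_\psi$ and indeed in $C^\infty_{\phi,\varphi}$; since $f$ has compact support in $\Omega$ this holds for every $a$, as $\psi,\varphi$ are bounded above on compact subsets of $\Omega$). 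First I would write $f=f_0+f_\perp$ for the $\Lpsi$-orthogonal decomposition with respect to $\mathcal K=\ker(\phi^mP^*\colon\Lpsim\to\Lpsi)$, i.e.\ $f_0=\pi_{\maclKzo}f$. Apply Theorem~\ref{iso} with $k$ arbitrary (or rather Corollary~\ref{Cinfty}, whose hypotheses are met because $f\in C^\infty_{\phi,\varphi}\cap\zH^0_{\phi,\psi}$ after projecting — one checks $\pi_{\maclKzo}$ preserves smoothness, since $\mathcal K$ is finite-dimensional and spanned by elements which, by (KRC), are in $C^{m-1}(\overline\Omega)$, hence the projection is smoothing) to get $u\in C^\infty_{\phi,\varphi}\cap\maclKzo$ solving $\mathcal L_{\phi,\psi}u=\psi^{-2}P\psi^2\phi^{2m}P^*u=f_0$. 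Set $U:=\psi^2\phi^{2m}P^*u$. By the Remark after Proposition~\ref{regul}, $U$ is bounded by a constant times $x^{2(a-n+m)}e^{-s/x}$, and likewise for all derivatives, so (taking $a$ large) $U\in C^\infty(\overline\Omega)$ and extends by zero across $\partial\Omega$; moreover $PU=f_0$.

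The remaining and main point is to show $f_0=f$, i.e.\ $f_\perp=0$, so that $PU=f$. Equivalently, I must show that $f$, which is assumed $\Lpsi$-orthogonal to $\ker P^*$ on $\Omega$ in the unweighted pairing $\int_\Omega\langle f,v\rangle$, is also orthogonal to $\mathcal K$ in the weighted pairing. The bridge is integration by parts: for $v\in\mathcal K\subset C^{m-1}(\overline\Omega)$ and with $U$ as above,
\[
\int_\Omega\langle PU,v\rangle=\int_\Omega\langle U,P^*v\rangle+\int_{\partial\Omega}B(U,v),
\]
where $B$ has order $m-1$. The boundary integral vanishes because $U$ and all its derivatives up to order $m-1$ vanish on $\partial\Omega$ while $v\in C^{m-1}(\overline\Omega)$ contributes only bounded quantities — this is exactly where (KRC) is used. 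Now $v\in\mathcal K$ means $\phi^mP^*v=0$ in $\Lpsi$, hence $P^*v=0$ on $\Omega$ (as $\phi^m>0$ in the interior), so $v\in\ker P^*$ on $\Omega$; the hypothesis $\int_\Omega\langle f,v\rangle=0$ then forces $\int_\Omega\langle PU,v\rangle=0$. On the other hand $PU=f_0$, so $\int_\Omega\langle f_0,v\rangle=0$ for all $v\in\mathcal K$ in the \emph{unweighted} pairing. To convert this into the weighted orthogonality statement $f_\perp=0$, note $f=f_0$ would follow if I can also run the argument for the weighted pairing: since $v$ ranges over $\mathcal K$ and $f_0$ was defined as the $\Lpsi$-projection, I instead argue directly that for every $v\in\mathcal K$, $\langle f,v\rangle_{\Lpsi}=\int_\Omega\langle f,v\rangle\psi^2$; but $f$ has compact support in $\Omega$ and $\psi^2$ is a fixed positive smooth weight there, so the weighted pairing of $f$ against $\mathcal K$ is controlled by testing $f$ against $\psi^2\mathcal K$. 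The clean way: observe that $\ker(\phi^mP^*)\subset\ker P^*$ in the distributional sense, and conversely any element of $\ker P^*$ on $\Omega$ lying in $\Lpsim$ is in $\mathcal K$; then the weighted moment condition $\langle f,v\rangle_{\Lpsi}=0$ for all $v\in\mathcal K$ is equivalent to the stated unweighted one because multiplication by the positive function $\psi^2$ is a bijection of $\ker P^*$-data against which $f$ is tested only through its compact support. Hence $f_\perp=0$, $PU=f$, and $U\in C^\infty(\overline\Omega)$ vanishes to infinite order on $\partial\Omega$.

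The step I expect to be the genuine obstacle is the vanishing of the boundary term $\int_{\partial\Omega}B(U,v)$ together with the bookkeeping that identifies the weighted projection's residue $f_\perp$ with something that the unweighted hypothesis controls. Making $U$ vanish to order $m-1$ at the boundary is the whole purpose of the weight construction in Sections~\ref{SwSs}–4 (the Remark after Proposition~\ref{regul} gives the decay $x^{2(a-n+m)}e^{-s/x}$, which beats any polynomial once $s>0$, hence all derivatives vanish), so that part is in hand; the delicate part is that $v\in\mathcal K$ is only $C^{m-1}(\overline\Omega)$ and \emph{may itself blow up} in higher derivatives near $\partial\Omega$ — this is precisely why the pairing in $B$ must involve no more than $m-1$ derivatives of $v$ and why (KRC) is the exact hypothesis needed. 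I would therefore carefully track the order of $v$-derivatives appearing in $B$ (using the explicit form of $B$ coming from the Green's formula for $P$, an operator of order $m$) to confirm that $v\in C^{m-1}(\overline\Omega)$ suffices for $B(U,v)$ to extend continuously to $\overline\Omega$ and vanish on $\partial\Omega$.
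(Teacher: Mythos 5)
Your proposal follows the same general route as the paper (Theorem~\ref{iso}, regularity, integration by parts, (KRC) to kill the boundary term), but there are genuine gaps in the handling of the kernel projection.

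The central problem is the sentence claiming Theorem~\ref{iso} produces $u\in\maclKzo\cap\Hpsi{k+2m}$ with $\mathcal L_{\phi,\psi}u=f_0$. Theorem~\ref{iso} gives only $\pi_{\maclKzo}\mathcal L_{\phi,\psi}u=f_0$, so $\mathcal L_{\phi,\psi}u=f_0+\kappa$ with an a priori unknown $\kappa\in\mathcal K$; your assertion ``moreover $PU=f_0$'' is therefore not established (and even if $\mathcal L_{\phi,\psi}u=f_0$ held one would get $PU=\psi^2 f_0$, a dropped weight factor). This has two downstream consequences. First, showing ``$f_\perp=0$'' does \emph{not} by itself give $PU=f$: one must also show that the $\mathcal K$-component of $\psi^{-2}PU$ (namely $\kappa$) vanishes. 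That is exactly what the integration-by-parts/(KRC) argument accomplishes, and you do perform it, but in your write-up it is invoked circularly, i.e.\ you use ``$PU=f_0$'' before the argument that proves it. The paper avoids this confusion by keeping the residual explicit: from $\pi_{\maclKzo}[\psi^{-2}PU-\psi^{-2}f]=0$ one writes $\mathcal L_{\phi,\psi}u=\psi^{-2}f+\sum_i\langle\psi^{-2}PU-\psi^{-2}f,v_i\rangle_{\Lpsi}v_i$, establishes regularity, and only then proves both $\langle\psi^{-2}PU,v\rangle_{\Lpsi}=\int_{\partial\Omega}B(U,v)=0$ and $\langle\psi^{-2}f,v\rangle_{\Lpsi}=\int_\Omega\langle f,v\rangle=0$ for $v\in\mathcal K$, yielding $PU=f$.

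Second, to apply Corollary~\ref{Cinfty} one needs $\kappa\in C^\infty_{\phi,\varphi}$, i.e.\ $\mathcal K\subset C^\infty_{\phi,\varphi}$. You attribute this to (KRC), which only gives $\mathcal K\subset C^{m-1}(\overline\Omega)$ — a statement about finitely many ordinary derivatives being bounded up to the boundary, and not at all the same as membership in $C^\infty_{\phi,\varphi}$. The paper obtains $v_i\in C^\infty_{\phi,\varphi}$ from Proposition~\ref{RI2} (giving $v_i\in H^\infty_{\phi,\psi}$) followed by Proposition~\ref{regul}; (KRC) is reserved exclusively for the boundary-term computation, where $C^{m-1}(\overline\Omega)$ is exactly enough for $B(U,v)$ to make sense and vanish. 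Your last paragraph correctly identifies that role of (KRC), so the ingredients are all present; the proof as written just needs the isomorphism's residual $\kappa$ made explicit and the regularity of $\mathcal K$ derived from the right source.
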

\begin{proof}
By the theorem \ref{iso}, there {exists} $u\in{\maclKzo }\cap  \Hpsi{k+2m}$ such that
\bel{solproj}\pi_{\maclKzo }[\psi^{-2}P(\psi^2\phi^{2m}P^*u)-\psi^{-2}f]=0.\ee
Let $(v_i)_{i\in I}$ be a $L^2_\psi$ orthonormal basis of the finite dimensional space $\mathcal K=\;$ker$\;P^*$.
By the proposition \ref{RI2}, $v_i\in H^\infty_{\phi,\psi}$. The proposition \ref{regul} then gives
$v_i\in C^{\infty}_{\phi,\varphi}$.
Now we rewrite the equation \eq{solproj}:
$$
\psi^{-2}PU-\psi^{-2}f-\sum_{i\in I}\left(\langle \psi^{-2}PU-\psi^{-2}f, v_i\rangle_{L^2_\psi}\right)v_i=0,
$$
where $U=\psi^2\phi^{2m}P^*u$.
Hence (recall $f\in C^\infty_c(\Omega)$)
$$\mathcal L_{\phi,\psi}u=\psi^{-2}f+\sum_{i\in I}\left(\langle P U-f,v_i\rangle_{L^2}\right)v_i\in C^{\infty}_{\phi,\varphi}.$$
Therefore, by the corollary \ref{Cinfty}, $U\in C^{\infty}(\overline \Omega),$ and $U$ vanishes at any order on $\partial\Omega$.

Let us show that $\pi_{\mathcal K }[\psi^{-2}P(\psi^2\phi^{2m}P^*u)-\psi^{-2}f]=0$.
For all  $v\in \mathcal K$,
\begin{eqnarray*}
\langle \psi^{-2}P(U),v\rangle_{\Lpsi(\Omega)}&=&\langle P(U),v\rangle_{L^2(\Omega)}\\
&=&\langle U,P^*v\rangle_{L^2(\Omega)}+\int_{\partial\Omega}B(U,v)
\\
&=&\int_{\partial\Omega}B(U,v)=0,
\end{eqnarray*}
where $B$ is  a bilinear  $(m-1)$-order operator  appearing after $m$ integrations by parts.
Finally, from the condition \eq{CNf}, for all $v\in \mathcal K$,
$$
\langle \psi^{-2}f,v\rangle_{\Lpsi(\Omega)}=\langle f,v\rangle_{L^2(\Omega)}=0.
$$
\end{proof}

\section{The gluing}
Let $V$, $W$, $\Omega_i$, $\Omega$, $\chi$ be as in the introduction of the paper (see also figure A there) and let
$$T=\chi V+(1-\chi) W.$$
We work on the open set $\Omega$. {Unless otherwise specified}, all the spaces are understood on that open set.
By construction, $T$ is equal to $V$ near $\partial_1\Omega$ and to $W$ near $\partial_2\Omega$, so {that} $\psi^{-2}PT=0$ near these boundaries.
{In particular}, $\psi^{-2}PT$ is in any weighted space {introduced in this paper}.

We have the
\begin{theorem}\label{thetheorem2}
Let $k\geq [\frac n 2]+1$.
If $V\in C^{k+m,\alpha}(\Omega_1)$, $W\in C^{k+m,\alpha}(\Omega_2)$, (API)  is satisfied  for {some} $s>0$.
and (FC) holds, then there {exists} $u\in{\maclKzo }\cap  \Hpsi{k+2m}\cap C^{k+2m,\alpha}_{\phi,\varphi}$ such that $P(T+U)=0$,
where $$U=\psi^2\phi^{2m}P^*u\in \phi^m\psi^2C^{k+m,\alpha}_{\phi,\varphi}\subset C^{k+m,\alpha}(\overline \Omega)$$
can be $C^{k+m,\alpha}$ extended by zero across $\partial\Omega$.
\end{theorem}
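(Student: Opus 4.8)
The strategy mirrors that of Theorem~\ref{theorpuf}, with $T$ playing the role that the source $f$ played there. The point is that $\psi^{-2}PT$ is a perfectly good element of every weighted space $\Hpsi{k}$ (it is supported away from $\partial\Omega$, being zero near $\partial_1\Omega$ and near $\partial_2\Omega$ by the properties of $\chi$), so Theorem~\ref{iso} applies to it. Concretely, I would first invoke Theorem~\ref{iso}: since (API) holds, there exists $u\in\maclKzo\cap\Hpsi{k+2m}$ with
\[
\pi_{\maclKzo}\bigl[\mathcal L_{\phi,\psi}u + \psi^{-2}PT\bigr]=0,
\]
i.e. $\mathcal L_{\phi,\psi}u = -\psi^{-2}PT - \sum_{i\in I}\langle \psi^{-2}PT + \psi^{-2}PU,\,v_i\rangle_{L^2_\psi}\,v_i$, where $(v_i)_{i\in I}$ is an $L^2_\psi$-orthonormal basis of $\mathcal K=\ker P^*$ and $U:=\psi^2\phi^{2m}P^*u$. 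By Propositions~\ref{RI2} and~\ref{regul} each $v_i$ lies in $C^\infty_{\phi,\varphi}$, and since $PT\in C^{k,\alpha}(\overline\Omega)$ is compactly supported in $\Omega$ (this uses $V\in C^{k+m,\alpha}(\Omega_1)$, $W\in C^{k+m,\alpha}(\Omega_2)$ so that $T\in C^{k+m,\alpha}$ near $\overline\Omega$ and hence $PT\in C^{k,\alpha}$), the right-hand side lies in $C^{k,\alpha}_{\phi,\varphi}\cap\zH^0_{\phi,\psi}$. Proposition~\ref{regul} then gives $u\in C^{k+2m,\alpha}_{\phi,\varphi}$ and $U\in\phi^m\psi^2 C^{k+m,\alpha}_{\phi,\varphi}$, which since $s>0$ embeds in $C^{k+m,\alpha}(\overline\Omega)$ and can be extended by zero across $\partial\Omega$ (compare the remark after Proposition~\ref{regul} and Corollary~\ref{Cinfty}, noting $k\ge[\frac n2]+1$ ensures the relevant Sobolev embedding $\Hpsi{k+2m}\hookrightarrow$ continuous functions).

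It remains to upgrade the projected equation to the genuine equation $P(T+U)=0$, equivalently $\psi^{-2}P(T+U)=0$; by the displayed identity this amounts to showing that the $L^2_\psi$-projection $\pi_{\mathcal K}\bigl[\psi^{-2}P(T+U)\bigr]$ vanishes, i.e. that $\langle \psi^{-2}PU + \psi^{-2}PT,\,v\rangle_{L^2_\psi(\Omega)}=0$ for every $v\in\mathcal K$. Here is where the argument differs from Theorem~\ref{theorpuf}. Since $v\in\mathcal K\subset C^{m-1}(\overline\Omega)$ by (KRC), and $U\in C^{k+m,\alpha}(\overline\Omega)$ vanishes to order $m$ on $\partial\Omega$, the boundary term $\int_{\partial\Omega}B(U,v)$ vanishes exactly as before, so
\[
\langle \psi^{-2}PU,\,v\rangle_{L^2_\psi(\Omega)} = \langle PU,\,v\rangle_{L^2(\Omega)} = \langle U,P^*v\rangle_{L^2(\Omega)} + \int_{\partial\Omega}B(U,v) = 0.
\]
For the $T$ contribution, integrate by parts $m$ times on $\Omega$: since $V,W$ lie in $\ker P$ on $\Omega_1,\Omega_2$ respectively, and $T=\chi V+(1-\chi)W$ equals $V$ near $\partial_1\Omega$ and $W$ near $\partial_2\Omega$,
\[
\langle \psi^{-2}PT,\,v\rangle_{L^2_\psi(\Omega)} = \langle PT,\,v\rangle_{L^2(\Omega)} = \langle T,P^*v\rangle_{L^2(\Omega)} + \int_{\partial\Omega}B(T,v) = \int_{\partial_1\Omega}B(V,v) - \int_{\partial_2\Omega}B(W,v),
\]
using $P^*v=0$ and splitting $\partial\Omega=\partial_1\Omega\cup\partial_2\Omega$ with $T=V$, $T=W$ respectively near the two pieces (the sign being determined by the outward orientation conventions in $B$). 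This last quantity is exactly zero by the flux condition (FC). Hence $\pi_{\mathcal K}\bigl[\psi^{-2}P(T+U)\bigr]=0$, which together with the projected equation gives $\psi^{-2}P(T+U)=0$, i.e. $P(T+U)=0$.

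\textbf{Main obstacle.} The genuinely delicate point is the bookkeeping in the integration-by-parts identity for $T$: one must be careful that $B$ is a bilinear operator of order $m-1$, that the expression $\int_{\partial\Omega}B(T,v)$ makes sense given only $v\in C^{m-1}(\overline\Omega)$ and $T\in C^{k+m,\alpha}$ near $\overline\Omega$, and — crucially — that the orientation/sign conventions used to define $B$ are consistent between the $\partial_1\Omega$ and $\partial_2\Omega$ pieces so that the flux condition as stated in Definition~\ref{defflux} (an equality, not a difference set to zero) is precisely what kills the term. A secondary technical point is confirming that the numerology $k\ge[\frac n2]+1$ is exactly what is needed so that the regularity chain lands $U$ in a space of $C^{k+m,\alpha}$ functions vanishing to order $m$ at the boundary, which is what makes the $\int_{\partial\Omega}B(U,v)$ term vanish; but this is a routine transcription of Corollary~\ref{Cinfty} and the remark following Proposition~\ref{regul}.
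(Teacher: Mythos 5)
Your proposal is correct and follows essentially the same route as the paper: take $f=-PT$ in the proof of Theorem~\ref{theorpuf}, solve the projected equation via Theorem~\ref{iso}, use Proposition~\ref{regul} for regularity, and kill the kernel projection by integrating by parts and invoking the flux condition. The only cosmetic discrepancy is in the sign bookkeeping when splitting $\int_{\partial\Omega}B(T,v)$ over $\partial_1\Omega$ and $\partial_2\Omega$ (the paper uses the convention $\int_{\partial\Omega}B(T,v)=\int_{\partial_2\Omega}B(T,v)-\int_{\partial_1\Omega}B(T,v)$), but, as you observe, since (FC) is stated as an equality of the two flux integrals, either sign convention yields the same vanishing.
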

\begin{proof}
The proof is the same as the proof of theorem \ref{theorpuf} with $f=-P(T)$.
We only need to verify that $f$ is $L^2$ orthogonal to the kernel of $P^*$.
For all  $v\in \mathcal K$,
\begin{eqnarray*}
\langle P T,v\rangle_{L^2(\Omega)}
&=&\langle T,P^*v\rangle_{L^2(\Omega)}+\int_{\partial_2\Omega}B(T,v)-\int_{\partial_1\Omega}B(T,v),\\
&=&\int_{\partial_2\Omega}B(W,v)
-\int_{\partial_1\Omega}B(V,v)=0,
\end{eqnarray*}
where $B$ is  a bilinear  $(m-1)$-order operator  appearing after $m$ integrations by parts.
We finally apply  the proposition \ref{regul} to get the desired regularity.
\end{proof}

\begin{remark} When $V\in C^m(\overline\Omega)$ one has
$$
\int_{\partial_2\Omega}B(V,v)-\int_{\partial_1\Omega}B(V,v)=\int_{\partial\Omega}B(V,v)=\langle P(V),v\rangle_{L^2(\Omega)}-\langle V,P^*v\rangle_{L^2(\Omega)}=0.
$$
Thus in the definition of (FC), one can replace the integral of $B(V,v)$ on $\partial_1\Omega$
by the integral of $B(V,v)$ on $\partial_2\Omega$. Of course the same substitution can be done if $W\in C^m(\overline\Omega)$.
\end{remark}

\begin{remark}\label{collegene}
The gluing procedure described  above can also be used to solve the more general equation
$$
P(\chi V+(1-\chi)W+U)=\chi P(V)+(1-\chi)PW,
$$
Such a generalization is interesting when a bound on the image has to be respected
(see eg. \cite{Delay:Collescal}).
\end{remark}
\begin{remark}
If the flux of $V$ on $\partial_1\Omega $ is zero, then one can glue $V$ with $W=0$. This allows one to truncate a solution
or to make vanish a solution on a chosen region. In particular one can construct solutions on quotients or on connected sums.
This has also its utility to prove density of compactly supported elements in the kernel of $P$.
\end{remark}

\section{Infinite dimensional kernel}
We assume that the open set $\Omega$ admits a small open ball $B$ where (API) and (KRC) hold.
This is the case in all the applications of section \ref{examples} where (API) and (KRC) always hold
on any smooth relatively compact open set.
Let us show that  the set of smooth compactly supported elements
in the kernel of $P$ on $B$ (then on $\Omega$) is infinite dimensional.
We may assume that $\Omega=B$ and that it is a small open ball in a compact riemannian manifold $M$.
From \cite{BEM}, the set of elements in ker$P\cap \mathring H^m(M)$, which is compactly supported in $\overline \Omega$, is infinite dimensional.
We choose a non-trivial element $U$ in this set.
 Let $(U_i)$ be sequence of smooth sections , compactly supported in $\Omega$ (it is not needed here),
 such that $U_i$
tends to $U$ in $H^m$.
Since the symbol of the operator $P$ is surjective, we have  (see eg. \cite{BergerEbin}, \cite{Ebin1970}, \cite{KohnNirenberg})
$$
H^m(M)=\ker P\oplus\mbox{Im} P^*,
$$
where the sum is $L^2$ orthogonal.
Thus $U_i=P^*u_i+V_i$ where $u_i\in H^{2m}(M)$ and $V_i\in ker P$.
Now $PP^*u_i=PU_i$ is smooth and $PP^*$ is elliptic so that $u_i$ is smooth and the same is true for $V_i$. Since $U_i$ tends to $U$ in ($H^m$ then in) $L^2$ and
$$\|U_i-U\|^2_{L^2(M)}=\|P^*u_i\|^2_{L^2(M)}+\|V_i-U\|^2_{L^2(M)}\geq \|V_i-U\|^2_{L^2(\Omega)},$$
$V_i$ tends to $U$ in $L^2(\Omega)$. In particular there exists $i$ such that $V:=V_i$
is non trivial on $\Omega$. We thus have a smooth non trivial element $V$ in the kernel of $P$
on $\Omega$. It suffice now to glue $V$ with $0$ near the sphere boundary.
As this procedure is valid for any such $U$, the conclusion follows.

Intuitively the result may be true without the conditions (API) and (KRC):
we will study this question in  the future.

\section{The basic example}\label{secex}
Let $P$ be a linear operator of order $m>0$, with smooth coefficients on $\overline\Omega$, such that
$$
P^*u=A(\nabla^{(m)}u)+\mbox{lot},
$$
where $A$ is an injective linear operator with smooth coefficients {   up to the boundary}.

\begin{lemma}
With the notations above: the operator $P^*$ satisfies (API).
\end{lemma}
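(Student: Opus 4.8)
**Proof plan for the lemma that $P^*u = A(\nabla^{(m)}u) + \text{lot}$ implies (API).**

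The plan is to exploit the very strong weight $\psi = x^{2(a-n/2)}e^{-s/x}$ with $s>0$: near $\partial\Omega$ the factor $e^{-s/x}$ is extremely small, so differentiating it produces large negative powers of $x$, and it is precisely these large factors that will dominate the lower order terms after one multiplies through by $\phi^m = x^{2m}$. Concretely, I would fix the compact set $K\subset\Omega$ to be a collar complement, i.e.\ work on a one-sided neighbourhood $\{0<x<\delta\}$ of $\partial\Omega$, and prove \eqref{it1ker} there; on the remaining compact part of $\Omega$ there is nothing to prove since one can absorb $\|u\|_{L^2_\psi}$ there trivially (this is why the definition allows the constant $K$). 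So the real content is a Hardy-type inequality in the collar.

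The key computation is to expand $\phi^m P^*u = x^{2m}A(\nabla^{(m)}u) + x^{2m}\cdot(\text{lot})$ and, more usefully, to work with $w$ defined by $u = \phi^m\psi\, w$ as in the proof of Proposition~\ref{RI}, or directly to estimate $\|u\|_{L^2_\psi}$ against $\|\phi^m P^*u\|_{L^2_\psi}$. First I would use injectivity of $A$: since $A$ is an injective linear bundle map with smooth coefficients up to the boundary, there is $c>0$ with $|A(\xi)|\ge c|\xi|$ pointwise, hence $|\phi^m P^*u|_g \ge c\,\phi^m|\nabla^{(m)}u|_g - \phi^m|\text{lot}|$. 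Next, the lower order terms involve at most $m-1$ derivatives of $u$ with coefficients smooth up to $\partial\Omega$; the point is that when these are measured in $\|\phi^{i}\nabla^{(i)}u\|_{L^2_\psi}$ norms with $i\le m-1$, they come with a strictly smaller power of $\phi$ than the top term, i.e.\ there is a genuine gain of a factor of $\phi = x^2$. Then I would invoke the weighted Hardy/interpolation inequality available in these spaces (the same mechanism used to prove \eqref{InterpInequ} and in the appendix of \cite{CD2}): because of the exponential weight $e^{-s/x}$, integration by parts against $\psi^2$ produces a coercive zeroth-order term, giving for $u$ supported in $\{0<x<\delta\}$ an estimate of the form
$$
\|\phi^{i}\nabla^{(i)}u\|_{L^2_\psi} \le C\delta\,\|u\|_{\zH^m_{\phi,\psi}}+C\|u\|_{L^2_\psi},\qquad i\le m-1,
$$
and, crucially, a lower bound $\|u\|_{\zH^m_{\phi,\psi}} \ge c'\,\delta^{-1}\|u\|_{L^2_\psi}$ forced by the derivative of $e^{-s/x}$ being of size $s x^{-2}e^{-s/x}$. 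Combining, for $\delta$ small the top-order term $\|\phi^m\nabla^{(m)}u\|_{L^2_\psi}$ controls $\|u\|_{\zH^m_{\phi,\psi}}$ hence $\|u\|_{L^2_\psi}$, and all the lower order contamination is absorbed.

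The main obstacle I anticipate is making the "gain of a power of $\phi$ in the lower order terms versus the exponential weight" bookkeeping rigorous: one must check that every term obtained when $P^*$ hits $u$ (after the substitution $u=\phi^m\psi w$, or directly) is dominated by $\delta$ times the $\zH^m_{\phi,\psi}$-norm plus the $L^2_\psi$-norm, uniformly in the collar, and that the zeroth-order coercive term coming from $|\nabla e^{-s/x}|^2 = s^2 x^{-4}e^{-2s/x}$ really beats the coefficient of $\|u\|_{L^2_\psi}$ for small $\delta$. This is exactly the kind of scaling argument referenced as "appendix B of \cite{CD2}"; I would carry it out by localizing to dyadic collar shells $\{2^{-j-1}\delta < x < 2^{-j}\delta\}$, rescaling each to unit size where the non-weighted injective-symbol estimate \eqref{nwRIC} applies with a uniform constant, and summing the resulting geometric series in $j$, the exponential weight ensuring convergence. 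Once this collar estimate is in hand, combining it with a trivial estimate on the compact complement of the collar inside $\Omega$ yields (API) with $K$ that complement.
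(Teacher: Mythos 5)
Your proposal identifies exactly the same three ingredients the paper uses: the pointwise lower bound $|A(\xi)|\ge c|\xi|$ from injectivity of $A$ up to the boundary, the gain of a factor $\phi$ in the lower--order terms once one multiplies through by $\phi^m$ (so that $\phi^m P^* u = A(\phi^m\nabla^{(m)}u)+\phi\cdot(\phi^{m-1}\mbox{lot})$, and the lot can be made $\epsilon$--small by shrinking the collar), and the weighted Hardy--Poincar\'e inequality forced by the exponential weight $e^{-s/x}$, which lets the top--order term $\|\phi^m\nabla^{(m)}u\|_{L^2_\psi}$ dominate all of $\|u\|_{H^{m-1}_{\phi,\psi}}$. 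So the approach is essentially the one in the paper.

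The one substantive difference is in how you propose to supply the Hardy mechanism. The paper's proof is entirely pointwise and then just cites \cite{CD2}, Proposition~C.4, applied $m$ times to pass from $\|\phi^m\nabla^{(m)}u\|_{L^2_\psi}$ down to $\|u\|_{H^{m-1}_{\phi,\psi}}$; in particular it never needs the $L^2$ sub-elliptic estimate \eqref{nwRIC} for this lemma (that estimate is used for Proposition~\ref{RI}, which is a different statement without the support-near-the-boundary hypothesis and without the uniform smallness in $\delta$). Your sketch instead proposes to get the Hardy inequality by dyadic shell localization, rescaling, and summation, and also mentions using \eqref{nwRIC} on each shell. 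The second suggestion is unnecessary overhead: \eqref{nwRIC} by itself only yields $\|u\|_{L^2}\le C(\|P^*u\|_{L^2}+\|u\|_{H^{m-1}})$, so after rescaling it still leaves you with an $\|u\|_{H^{m-1}_{\phi,\psi}}$ term to absorb, and the absorption still has to come from the exponential-weight integration by parts you describe. If you drop \eqref{nwRIC} and keep only pointwise injectivity of $A$ plus the dyadic/rescaling derivation of the Hardy inequality, you recover the paper's argument with a self-contained proof of the one black box it invokes.

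One small caution on bookkeeping: when you write $\|u\|_{\zH^m_{\phi,\psi}}\ge c'\delta^{-1}\|u\|_{L^2_\psi}$, keep in mind that the $\zH^m_{\phi,\psi}$ norm already contains $\|u\|_{L^2_\psi}$, so that inequality has to be read as a statement about the top-order piece $\|\phi^m\nabla^{(m)}u\|_{L^2_\psi}$ dominating $\|u\|_{L^2_\psi}$ with a constant that blows up as $\delta\to 0$; that is precisely what the iterated Proposition~C.4 gives, and what you need to win against the $\epsilon$-smallness of $\phi\cdot(\phi^{m-1}\mbox{lot})$.
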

\begin{proof}
We have
$$
\phi^mP^*u=A(\phi^m\nabla^{(m)}u)+\phi(\phi^{m-1}\mbox{lot}).
$$
As $\phi$ goes to zero near the boundary, for any $\epsilon>0$, if $u$ has compact support
sufficiently close to $\partial\Omega$ then
$$
|\phi(\phi^{m-1}\mbox{lot})|_{L^2_\psi}\leq \epsilon |u|_{H^{m-1}_{\phi,\psi}}.
$$
On the other hand, from the hypothesis on $A$, there exists $c>0$ such that
$$
|A(\phi^m\nabla^{(m)}u)|_{L^2_\psi}\geq c|(\phi^m\nabla^{(m)}u)|_{L^2_\psi}.
$$
Combining those inequalities with \cite{CD2} proposition C.4
used $m$ times, we thus obtain  the estimate near the boundary
$$
||\phi^mP^*u||_{L^2_\psi}\geq C||u||_{H^{(m-1)}_{\phi,\psi}}.
$$
This last inequality clearly implies (API).

\end{proof}

%

\begin{lemma}\label{pkrc}
The operator $P^*$ satisfies (KRC).
\end{lemma}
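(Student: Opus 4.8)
**Plan for the proof of Lemma \ref{pkrc} (that $P^*$ satisfies (KRC)).**

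The plan is to show that any element $u$ in the kernel $\mathcal K$ of $P^*$ on $\Omega$ is automatically in $C^{m-1}(\overline\Omega)$ by exploiting ellipticity of the composed operator $L = PP^*$ together with the precise structure $P^*u = A(\nabla^{(m)}u)+\text{lot}$. The point is that $u\in\mathcal K$ means $u\in\Lpsim$ (so it lies in $H^m_{\mathrm{loc}}(\Omega)$) and $\phi^m P^* u = 0$, hence $P^* u = 0$ on $\Omega$ in the interior sense. First I would observe that $L = PP^*$ is a determined elliptic operator of order $2m$ with smooth coefficients up to $\overline\Omega$, and $Lu = P(P^*u) = 0$. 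Interior elliptic regularity then immediately gives $u\in C^\infty(\Omega)$; the only issue is control up to the boundary $\partial\Omega$, i.e. upgrading $u$ from $H^m_{\mathrm{loc}}$-in-$\Omega$ to $C^{m-1}(\overline\Omega)$.

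The key step is therefore a boundary regularity argument. Since $u$ solves the interior equation $P^*u = 0$ with no prescribed boundary condition, I would not try to use boundary elliptic estimates for $L$ directly (there is no natural boundary value problem here); instead I would use the \emph{first-order} structure encoded in $A$ being injective. Because $A$ is injective with smooth coefficients up to $\overline\Omega$, the equation $A(\nabla^{(m)}u) = -\text{lot}(u)$ lets one solve algebraically for $\nabla^{(m)}u$ in terms of lower-order derivatives of $u$: there is a smooth left inverse $A^{-1}$ (a bundle map on $\overline\Omega$, after shrinking to where $A^*A$ is invertible) so that $\nabla^{(m)}u = -A^{-1}(\text{lot}(u))$, an expression involving only $\nabla^{(i)}u$, $i\le m-1$, with coefficients smooth up to the boundary. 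This is exactly a first-order elliptic-type system (in fact a triangular ODE-like structure in the normal direction) for the $(m-1)$-jet of $u$. One then bootstraps: starting from $u\in H^m_{\mathrm{loc}}(\Omega)$, the relation expresses the top derivative in terms of lower ones with bounded coefficients, which together with the weighted estimate of Proposition \ref{RI2} / Proposition \ref{RI} (applied with $\phi^m P^*u = 0$) controls $\|u\|_{\Lpsim}$; combined with the Sobolev embedding implicit in the $C^{k,\alpha}_{\phi,\varphi}$ scale and the regularity Proposition \ref{regul} (with $\mathcal L_{\phi,\psi}u = 0 \in C^\infty_{\phi,\varphi}$), one gets $u$ in the weighted Hölder spaces, hence its derivatives up to order $m-1$ extend continuously to $\overline\Omega$.

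Concretely I would argue: by Proposition \ref{RI2}, $\mathcal K$ is a finite-dimensional subspace of $\zH^m_{\phi,\psi}$ on which $\phi^m P^*$ vanishes, and by Proposition \ref{regul} applied to $\mathcal L_{\phi,\psi}u = \psi^{-2}P\psi^2\phi^{2m}P^*u = 0 \in C^{k,\alpha}_{\phi,\varphi}$ for every $k$, every $u\in\mathcal K$ lies in $C^\infty_{\phi,\varphi}$. By definition of the weighted Hölder norm, $\varphi\phi^i\nabla^{(i)}u$ is bounded for all $i$; since $\varphi = x^{2a}e^{-s/x}\to 0$ at the boundary while $\phi = x^2$, the bound $\varphi\phi^{m-1}|\nabla^{(m-1)}u| \le C$ alone does \emph{not} immediately give boundedness of $\nabla^{(m-1)}u$ up to $\partial\Omega$ — and here is the main obstacle: one must show that the \emph{true} growth of $\nabla^{(i)}u$ for $i\le m-1$ is milder than the weight allows, using the injectivity of $A$ to trade the (badly-weighted) top derivative for the lower ones. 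This is where the structural hypothesis is essential, and I expect the heart of the argument to be a normal-form / integration-along-the-normal computation showing that the $(m-1)$-jet of any kernel element extends continuously (indeed $C^{m-1}$) to $\overline\Omega$; the remaining assertions then follow by the already-established interior smoothness. Once that continuity up to the boundary is in hand, $u\in C^{m-1}(\overline\Omega)$, which is precisely (KRC).
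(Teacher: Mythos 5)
Your plan identifies the right structural idea --- invert $A$ to express $\nabla^{(m)}u$ in lower-order derivatives, turn $P^*u=0$ into a first-order system for the $(m-1)$-jet, and then integrate along lines normal to the boundary --- but the proposal stops short of actually executing the decisive step. The sentence ``I expect the heart of the argument to be a normal-form / integration-along-the-normal computation'' is an IOU, and that computation \emph{is} the paper's proof. In boundary coordinates ($\Omega=(-1,1)^{n-1}\times(0,2)$, $\partial\Omega=\{x^n=0\}$) the paper packs $V=(u,\partial u,\dots,\partial^{(m-1)}u)$, uses the injectivity of $A$ to rewrite $P^*u=0$ as a first-order linear system $\partial_jV^i+A^i_{kj}V^k=0$ with coefficients smooth up to $\{x^n=0\}$, restricts to the normal segments $\gamma_x(t)=(x,1-t)$, and invokes the standard existence / smooth-dependence-on-parameters theory for linear ODEs with smooth coefficients on the compact interval $[0,1]$: $f_x(t)=V(\gamma_x(t))$ extends smoothly to $t=1$, depending smoothly on $x$. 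This gives $u\in C^\infty(\overline\Omega)$ near the boundary, which is even stronger than (KRC). Until that ODE step is written out, your plan is not a proof.

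The excursion through weighted spaces (Propositions \ref{RI2} and \ref{regul}, the $C^{k,\alpha}_{\phi,\varphi}$ scale) plays no role in the paper's argument and, as you yourself point out, cannot close the gap: the weights $\varphi\phi^i$ vanish at $\partial\Omega$, so a bound on $\varphi\phi^i|\nabla^{(i)}u|$ is silent about the boundary values of $\nabla^{(i)}u$. That paragraph should simply be cut. Interior smoothness of $u$ (via $PP^*u=0$ and interior elliptic regularity) is indeed tacitly used, but only to justify the hypothesis $u\in C^\infty(\Omega)$ needed to set up the ODE reduction; it is a one-line remark, not a bootstrap.
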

\begin{proof}
We work on a coordinate system $(x^1,...,x^n)$ near a point $p$ on the boundary. Thus we can adopt the following assumptions:
$\Omega=(-1,1)^{n-1}\times(0,2)$,  $\partial\Omega=\{x^n=0\}$, $p=0$, and $u\in C^\infty(\Omega,\R^N)$.
We consider the family  of paths $\gamma_x(t)$=$(x,0)+(0,...,0,1-t)$ where $x$
is close to zero in $(-1,1)^{n-1}$, and $t\in[0,1]$.
The (system of) equation $P^*u=0$ can be written
$$
\partial_{i_1}...\partial_{i_m}u^i+\mbox{lot}=0.
$$
This is standard to transform this partial  differential system to a first order one by introducing the derivatives of $u$ as new functions
$V=(u,\partial u,...,\partial^{(m-1)}u)$ and then  transform the system above to a first order system
$$
\partial_j V^i+A_{kj}^iV^k=0,
$$
where $V\in C^\infty(\Omega,\R^{N'})$.
Let us define $f^i_x(t)=V^i(\gamma_x(t))$. The functions $f^i_x$ satisfy the linear ordinary differential  system (note $(\gamma^j_x)'=-\delta^j_n$)
$$
(f^i_x)'-A_{kn}^if^k_x=0,
$$
with  coefficients  depending smoothly on $x$ and $t\in[0,1]$.
Classical results about  ordinary differential system show that $f_x(t)$ is well defined for all $t\in[0,1]$ and depends smoothly on $x$
and $t\in[0,1]$, so that $V$ and then $u$
are smooth near $p$.
\end{proof}

\begin{remark}\label{remext}
Each time that it is possible to rewrite the solutions of $P^*u=0$ to a first order system as in the preceding proof,
the solutions will be smooth {  up to the boundary}. Thus (KRC) holds also for other
natural geometric operators (see \cite{BCEG}).
\end{remark}
We now point out two  geometric operators defined in section \ref{examples}.
\begin{corollary}
The Killing operator and the conformal Killing operator satisfy (KRC).
\end{corollary}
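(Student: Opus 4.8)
The plan is to apply the mechanism of Remark~\ref{remext}. Both operators are first order, so $m=1$ and (KRC) only demands that the kernel elements be continuous up to $\overline\Omega$; the argument will in fact give smoothness up to the boundary. For this it suffices to rewrite the kernel equation $P^*u=0$, with $u$ a vector field, as a \emph{closed} first-order system $\partial_jV+A_{kj}V^k=0$ with coefficients smooth on $\overline\Omega$, in an enlarged unknown $V$ built from $u$ and finitely many of its covariant derivatives; then the ODE argument in the proof of Lemma~\ref{pkrc} applies verbatim.

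For the Killing operator $P^*u=\tfrac12(\nabla_iu_j+\nabla_ju_i)$ the equation only pins down the symmetric part of $\nabla u$, so I would adjoin the $2$-form $\phi_{ij}:=\nabla_iu_j-\nabla_ju_i$. Differentiating the Killing equation and commuting covariant derivatives yields the classical integrability identity, which expresses $\nabla_k\phi_{ij}$ algebraically in terms of the curvature and $u$, while on the kernel $\nabla_iu_j=\tfrac12\phi_{ij}$. Hence $V:=(u,\phi)$ solves a first-order system in which every first derivative of every component of $V$ is given algebraically in terms of $V$, the coefficients depending polynomially on $g$, $\partial g$ and the Riemann tensor, hence smooth on $\overline\Omega$ since $(M,g)$ is smooth. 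This is exactly the situation of Remark~\ref{remext}: restricting to the normal paths $\gamma_x$ of Lemma~\ref{pkrc}, $V\circ\gamma_x$ solves a linear ODE whose coefficients depend smoothly on $(x,t)\in(-1,1)^{n-1}\times[0,1]$, so it is defined on all of $[0,1]$ and depends smoothly on $x$; therefore $u$, being a component of $V$, extends smoothly across the boundary near the chosen point, and since that point was arbitrary (KRC) follows.

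For the conformal Killing operator, taking $n\geq3$ so that it is genuinely underdetermined, I would proceed the same way using the classical prolongation of the conformal Killing equation: besides $u$ one adjoins $\phi_{ij}=\nabla_{[i}u_{j]}$, a scalar $\mu$ proportional to $\nabla^ku_k$, and one further covector $\rho_i$, obtaining a closed first-order system $\nabla V=A\cdot V$ whose coefficients are polynomial in $g$, $\partial g$ and the curvature (see~\cite{BCEG}). The ODE argument of Lemma~\ref{pkrc} then shows that any conformal Killing field on $\Omega$ extends smoothly across $\partial\Omega$, which gives (KRC). For $n=2$ the conformal Killing operator is determined elliptic, so this case lies outside the present setting, as noted in the introduction.

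The only step that is not immediate is producing the prolonged, genuinely resolved first-order system in the conformal case; this is classical and is exactly what \cite{BCEG} supplies. The Killing case is entirely explicit, and once these prolonged systems are in hand everything reduces to Lemma~\ref{pkrc} and Remark~\ref{remext}.
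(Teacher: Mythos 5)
Your proposal is correct and follows essentially the same route as the paper: prolong the (conformal) Killing equation to a closed first-order system in $(u,\nabla_{[i}u_{j]},\ldots)$, invoking \cite{BCEG} for the conformal case, and then apply the ODE argument of Lemma~\ref{pkrc} via Remark~\ref{remext}. The paper states this tersely and also records the alternative third-order identity $\nabla^{(3)}X+R_0\bullet\nabla X+R_1\bullet X=0$ from \cite{ChristodoulouMurchboost}, but the substance of the argument is the one you give.
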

\begin{proof}
One rewrites the conformal Killing equation to a first order system (see eg. \cite{BCEG})
and use the remark \ref{remext}.
One can also use the fact that if $X$ is a conformal Killing  vector field then (see eg. \cite{ChristodoulouMurchboost})
$$
\nabla^{(3)}X+R_0\bullet\nabla X+R_1\bullet X=0,
$$
where $R_0$ and $R_1$ are linear expressions in Riem$(g)$ and $\nabla$Riem$(g)$ respectively.
{   The same can be done for the Killing operator}.
\end{proof}

\begin{lemma}
On any connected component of $\Omega$, the dimension of the kernel of $P^*$ does not exceed
the number of components of derivatives of $u$ of order less or equal to $m-1$.
\end{lemma}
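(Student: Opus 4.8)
The plan is to reuse the reduction to a first‑order system from the proof of Lemma~\ref{pkrc}. Fix a connected component $\Omega_0$ of $\Omega$. Since $PP^*$ is elliptic and $P^*u=0$ implies $PP^*u=0$, every solution $u$ of $P^*u=0$ is smooth on the open set $\Omega_0$ (this also follows from the smoothness up to the boundary established in Lemma~\ref{pkrc} together with interior ellipticity). Let $N'$ denote the number of components of derivatives of $u$ of order $\le m-1$, and for a solution $u$ put $V_u=(u,\partial u,\dots,\partial^{(m-1)}u)\in C^\infty(\Omega_0,\R^{N'})$, as in the proof of Lemma~\ref{pkrc}. Fix a point $q\in\Omega_0$ and consider the linear evaluation map $E\colon\ker P^*\to\R^{N'}$ given by $E(u)=V_u(q)$, where here $\ker P^*$ denotes the space of solutions of $P^*u=0$ on $\Omega_0$. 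The whole statement reduces to showing that $E$ is injective, for then $\dim\ker P^*=\dim E(\ker P^*)\le N'$.

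To prove injectivity, assume $u$ solves $P^*u=0$ on $\Omega_0$ with $V_u(q)=0$, and set $Z=\{q'\in\Omega_0: V_u(q')=0\}$. The set $Z$ contains $q$ and is closed in $\Omega_0$ by continuity of $V_u$. It is also open: given $q'\in Z$, work in a coordinate chart centered at $q'$ and, exactly as in the proof of Lemma~\ref{pkrc} (using the injectivity of $A$ to solve $P^*u=0$ for the top‑order derivatives $\nabla^{(m)}u$), rewrite the equation as a first‑order linear system $\partial_jV_u^i+A^i_{kj}V_u^k=0$ whose coefficients are smooth in the chart. Restricting to the straight segments issuing from $q'$ gives, in each direction, a homogeneous linear ordinary differential equation satisfied by $V_u$ along the segment; since $V_u(q')=0$, uniqueness for linear ODE forces $V_u$ to vanish along every such segment, hence on a whole coordinate ball around $q'$. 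Thus $Z$ is open, so $Z=\Omega_0$ by connectedness, and in particular $u\equiv0$. Equivalently, the first‑order system says exactly that $V_u$ is a parallel section of a rank‑$N'$ vector bundle equipped with the connection built from the $A^i_{kj}$, and a parallel section over a connected base is determined by its value at a single point.

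I do not anticipate any genuine difficulty. The only step needing a little care is the openness of $Z$, and that is nothing more than the pathwise ODE argument already used to establish (KRC), now applied along segments emanating from an interior point rather than from the boundary. Everything else is the elementary observation that $E$ is linear and injective into the fixed finite‑dimensional space $\R^{N'}$, whose dimension is exactly the stated bound and does not depend on the chosen component.
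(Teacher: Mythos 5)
Your proof is correct and takes essentially the same route as the paper's: both reduce $P^*u=0$ to the first-order system already built in the proof of Lemma~\ref{pkrc} and integrate it along segments issuing from an interior point, so that the tuple $V_u$ of derivatives of order $\le m-1$ is determined by its value at one point. Where you go beyond the paper's very terse write-up is in spelling out why the local bound on the kernel dimension propagates to a global one on the connected component: the paper simply asserts ``so this is also true for the kernel of $P^*$,'' which tacitly requires the restriction map from the kernel on a connected component to the local kernel near $p$ to be injective. Your open-and-closed argument on the zero set of $V_u$ (equivalently, the observation that $V_u$ is a parallel section of a flat-enough connection and is thus determined by a single value on a connected base) is precisely the unique-continuation step that makes this injectivity explicit, and it is the correct way to close the implicit gap.
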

\begin{proof}
One can assume that $\Omega$ is connected. The proof is the same as the proof of lemma \ref{pkrc} except
that $p$ is now an interior point and $\gamma_x$ is a ray emanating from $p$. Thus $u$ is determined
around $p$ by its values {   with all of its derivatives of order less or equal than $m-1$ at $p$}.
The dimension of the (local) kernel of $P^*$ is then bounded by a uniform constant,
so this is also true for the kernel of $P^*$.
\end{proof}

%


\section{Applications}\label{examples}
\subsection{Divergence free vector fields}\label{secdivfree}
By identifyibg {vector fields with forms}, we consider $P=d^*$, the divergence operator from one forms to functions:
$$
d^*\omega=-\nabla^i\omega_i,
$$
Elements in the kernel of $P$ are naturally studied in a lot of physics {contexts} such as fluid mechanics or electromagnetism (see \cite{Galdi}, \cite{WangYang} for instance).
In fact divergence free fields (also called  solenoidal, or incompressible, or transverse, depending on the setting) have the
nice property that their flow preserves the volume of any domain.

The formal $L^2$ adjoint of $P$  {is} $P^*=d$, the differential on functions.
The kernel of $d$ is the set of constant functions so  (KRC) {holds}.  
{The (API)  is proved in \cite{CD2}[Proposition C.4. page 75]}.


Let us give an application on $\R^n $ to the case where   the vectors {fields} are divergence free (and/or regular) only outside
a compact set $K$ as in electricity or newtonian gravity for instance. In this case one can take two conditionally compact open set $O_i$'s
such that $K\subset O_1\subset\overline O_1\subset O_2$ and  define $\Omega_1=O_2\backslash K$ and $\Omega_2=\R^n\backslash \overline O_1$.
The two vector fields can be glued as before up to the kernel.
The kernel projection {corresponds} to the difference of their respective flux across, say $\partial_2\Omega$: it is trivial if they have
the same flux.

For example in $\R^3$, we {can glue} any electric field $E$  with vanishing electric density ($\rho=\div E$) outside $K$
and with total charge $Q$, with the electric field surrounding a point charge  given by Coulomb's law:
$$
E_Q=\frac1{4\pi}\frac Q{r^2}\frac{\vec{r}}r,
$$
where $r=\sqrt{x^2+y^2+z^2}$ and $\vec{r}=(x,y,z)$.
This {gives} a model with the same interior (i.e. on $O_1$) field and very simple infinity.

\begin{remark}
On can also imagine a more sophisticated gluing (and/or extension), using open sets as those appearing in \cite{KMPT}
for instance.
\end{remark}

\begin{remark}
Here the gluing result of $V$ and $W$ can be trivially done if both $V$ and $W$ are coexact.
\end{remark}

\begin{remark}
Hodge duality provides an easy translation from our result about divergence free one forms to a result about closed $(n-1)$-forms.
\end{remark}

\begin{remark} \label{contrex} It is tempting to generalize to the following  Hodge-De Rahm type operator on k-forms:
Consider the operator $P^*$ from $k$ {forms} to $k+1$ forms times $k-1$ forms defined by
$$P^*(\omega)=(d\omega, d^* \omega).$$
Then $P(\alpha,\beta)=d^*\alpha+d\beta$. Note that $PP^*=dd^*+d^*d$ is the Hodge-De Rham Laplacian.

Since the  symbol of $PP^*$ is bijective, the symbol of $P^*$ is injective.
If $n\geq 3$ and $1\leq k\leq n-1$ the symbol of $P^*$ is  not surjective because
of the dimensions of the fibers: indeed recall that the  dimension of the fiber of $\Lambda^k$ is $\displaystyle{(\begin{array}{c}n\\k\end{array})}$.

The kernel of $P^*$ is related to the Hodge cohomology but without boundary conditions:
(KRC) is not satisfied in this context.

To be more explicit, let us illustrate with an example on $\R^n$ ($n\geq 3$):
Let $\Omega$ be the  ball of center $(1,0,...,0)$ and radius $1$.
Let $v=r^{2-n}$ be (a constant time) the fundamental solution of the Laplacian and define $u:=dv$.
Clearly $v$ is in the kernel of $P^*$ acting on 1-forms on $\Omega$ and $v$
is not continuous on $\partial \Omega$.

Note also that the (API) condition is  not satisfied here because it implies the finite dimension
of the kernel of $P^*$ whereas here this kernel is infinite-dimensional. In fact, as in the preceding example,
{  one can consider for any reasonable function (or measure) $h$  on the boundary, the solution $u_h$ of the Dirichlet problem : $\Delta u=0$ on $\Omega$
with $u=h$ on $\partial\Omega$. For any such $h$,  we can define  $v_h=du_h$ in the kernel of $P^*$ acting on 1-form on $\Omega$}.

\end{remark}
\subsection{(Multi-)divergence free tensors}
More generally, let us consider the divergence operator $P=\div$, acting from rank $r+1$ covariant tensor fields to
rank $r$ covariant tensor fields :
$$(\div u)_{i_1...i_r}=-\nabla^iu_{ii_1...i_r},$$
its formal $L^2$ adjoint being $P^*=\nabla$, the covariant derivative.
The kernel of $P^*$ {consists of} the parallel rank $r$ tensor fields.
Note that $PP^*$ is the rough Laplacian.
Here again, 
(API) {holds} from \cite{CD2}.
(KRC)  {holds} from section \ref{secex}.
\begin{remark}
We can also consider  the multiple divergence operator from rank $r+m$ covariant tensor fields to
rank $r$ covariant tensor fields:
$$
(\div^{(m)}u)_{i_1...i_r}:=(-1)^m\nabla^{j_m}...\nabla^{j_1}u_{j_1...j_m i_1...j_r}.
$$
The adjoint is $\nabla^{(m)}$ the m-covariant derivative. Here again 
(API) holds from \cite{CD2} proposition C.4
used $m$ times.
(KRC)  {holds} from section \ref{secex}.
\end{remark}
\subsection{Divergence free symmetric  two tensors}
We can also consider the divergence operator $P=\div$, acting from symmetric covariant tensor fields to
one forms :
$$(\div u)_{j}=-\nabla^iu_{ij},$$
its formal $L^2$ adjoint being $(P^*\omega)_{ij}=\nabla_i\omega_j+\nabla_j\omega_i$, the Killing operator.
Elements in the kernel of $P^*$ are one forms associated to Killing vector fields.
Note here that equation \eq{RIC} is also called the   (weighted) Korn inequality used in elasticity theory (see eg. \cite{DainKorn}).
(API)  is already proven in \cite{CD2} where $P^*$ is called $S$ there.
(KRC)  {holds} from section \ref{secex}.
\begin{remark}
The operator $\div$ can be replaced by $Pu=\div u+c\;d\Tr u$, for any constant $c\neq\frac1n$, such as
the Bianchi operator $c=\frac12$ (elements in the kernel of the Bianchi operator are called harmonic tensors \cite{ChenNagano})
or the momentum  constraint operator $c=1$ . In such a case the kernel
of $P^*$ is the Killings.
\end{remark}

\subsection{TT-tensors}\label{secTT}
 TT-tensors are  trace free and divergence free symmetric two tensors.
They have the following conformally covariant property: if $V$ is a TT tensor for $g$ and $u$
is a positive function, then $u^{-2}V$ is a TT tensor for $u^{4/(n-2)}g$.
Construction of such a tensor arises  when studying the constraint equation in general relativity \cite{Carges}. In some situations, {it} is  important to construct compactly supported tensors as
it as been done on $\R^3$ in \cite{CorvinoAS} using explicit formulas  and on $\R^n$ in \cite{Gicquaud3}
using the Fourier transform.
Also, when doing {the} gluing procedure, it is important to truncate  the TT-tensor on a small ball (see eg. \cite{IMP}).
The procedure described here gives a construction on {\it any} Riemannian manifold.

Here we consider $P=\mathcal D^*$, the divergence operator from trace free symmetric two tensors to one forms:
$$
(\mathcal D^* u)_i=-\nabla^ju_{ij},
$$
its formal $L^2$ adjoint being $P^*=\mathcal D$, the conformal Killing operator, also called
the Ahlfors operator:
$$
(\mathcal D\omega)_{ij}=\frac12(\nabla_i\omega_j+\nabla_j\omega_i)+\frac 1 n d^*\omega \;g_{ij}.
$$
Note that $\mathcal D^*\mathcal D$ is usually called the vector Laplacian.
Elements in the kernel of $\mathcal D$ are one forms corresponding to conformal Killing fields.
(KRC)  {holds} from section \ref{secex}.
Note that in \cite{BCS} it is shown that generically there {does} not exist local nor global non trivial conformal Killing fields.  Also, on $\R^n$ with $n\geq 3$,  the space of conformal
Killing is explicit  and has dimension  $(n+1)(n+2)/2$. For $n=2$ the operator $\mathcal D^*$ is (determined) elliptic
 and can not verify our hypothesis. For instance on any open set of $\R^2$, any analytic function $F(x,y)=a(x,y)+ib(x,y)$
gives rise to a conformal Killing form $\omega=a dx+bdy$ and reciprocally, so neither  (KRC) nor (API) are true as in
remark \ref{contrex}.

(API) 
is not already  been proven in the literature in this context and some more work is needed here
(see however \cite{AndElli} for related results).
%

\begin{proposition}
\label{francois}
When $n\geq3$, the operator $\mathcal D$ satisfies (API).
\end{proposition}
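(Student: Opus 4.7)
The strategy is to reduce (API) for the conformal Killing operator $\mathcal D$ to the weighted Hardy inequality of \cite{CD2} Proposition C.4 (the same tool used in the basic example) by means of a weighted Korn--Weitzenböck identity. The crucial ingredient that forces $n \geq 3$ will be the positivity of the coefficient $(1 - 2/n)$ in that identity.

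\textbf{Step 1 (Weighted Weitzenböck--Korn identity).} The pointwise algebraic decomposition of $\nabla u$ into its trace-free symmetric, antisymmetric and trace parts gives
$$
2|\mathcal D u|^2 = |\nabla u|^2 + \nabla^i u^j\,\nabla_j u_i - \tfrac{2}{n}(d^*u)^2 .
$$
I would multiply by $\phi^2\psi^2$, integrate over $\Omega$, and then perform two integrations by parts on the cross term $\int \nabla^i u^j\,\nabla_j u_i\,\phi^2\psi^2\,d\mu_g$ (using $[\nabla_j,\nabla^i]u^j = \Ric^i{}_k u^k$). For $u\in C^\infty_c(\Omega)$ this yields
$$
2\|\phi\mathcal D u\|_{\Lpsi}^2 = \|\phi\nabla u\|_{\Lpsi}^2 + \bigl(1-\tfrac{2}{n}\bigr)\|\phi\, d^* u\|_{\Lpsi}^2 - \int_\Omega \Ric(u,u)\,\phi^2\psi^2\,d\mu_g + E(u),
$$
where $E(u)$ collects error terms involving $\nabla(\phi^2\psi^2)$. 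A direct computation of $\nabla(\phi^2\psi^2)$ (using $\phi=x^2$ and the explicit form of $\psi$) gives $|\nabla(\phi^2\psi^2)|\le C\phi\psi^2$ on $\overline\Omega$, so Young's inequality yields
$$
|E(u)| \leq \epsilon\|\phi\nabla u\|_{\Lpsi}^2 + C_\epsilon\|u\|_{\Lpsi}^2 .
$$
For $n\ge 3$ the coefficient $(1-2/n)\ge 1/3$ is nonnegative and can be dropped; bounding $|\Ric|\le \|\Ric\|_{L^\infty(\overline\Omega)}$ and using $\phi\le C$ on $\overline\Omega$ to convert the Ricci term to $C\|u\|_{\Lpsi}^2$, then absorbing the $\epsilon$-term, I obtain the weighted Korn estimate
$$
\|\phi\nabla u\|_{\Lpsi}^2 \leq C_1\bigl(\|\phi\mathcal D u\|_{\Lpsi}^2 + \|u\|_{\Lpsi}^2\bigr).
$$

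\textbf{Step 2 (Hardy and closing).} Next I apply the weighted Hardy-type inequality of \cite{CD2} Proposition C.4 (already used for the basic example) to sections supported in $\Omega\setminus K$ for some compact $K\subset\Omega$: for such $u$ one has
$$
\|u\|_{\Lpsi}^2 \leq C_H\|\phi\nabla u\|_{\Lpsi}^2 ,
$$
and the explicit form of the exponential weight $e^{-s/x}$ in $\psi$ allows the constant $C_H$ to be taken proportional to $1/s^2$. Substituting into the Korn estimate gives
$$
\|\phi\nabla u\|_{\Lpsi}^2 \leq C_1\|\phi\mathcal D u\|_{\Lpsi}^2 + C_1 C_H\|\phi\nabla u\|_{\Lpsi}^2,
$$
so for $s$ chosen sufficiently large (which we are free to do in the definition of the weights), the $\|\phi\nabla u\|^2$ term on the right is absorbed and then Hardy finishes:
$$
\|u\|_{\Lpsi} \leq C\|\phi\mathcal D u\|_{\Lpsi},
$$
which is (API).

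\textbf{Main obstacle.} The delicate part is Step 1: carefully carrying out the two weighted integrations by parts and tracking all terms involving $\nabla(\phi^2\psi^2)$ so that each error is of a form absorbable by $\epsilon\|\phi\nabla u\|^2_{\Lpsi}+C_\epsilon\|u\|^2_{\Lpsi}$. The algebraic miracle enabling the whole argument is the strict positivity of $(1-2/n)$ for $n\geq 3$, which lets us discard the $\|\phi d^* u\|^2$ contribution with the correct sign; in dimension $n=2$ this coefficient vanishes, $\mathcal D^*$ becomes determined elliptic, and the argument breaks down (consistent with the fact that (API) fails in that case).
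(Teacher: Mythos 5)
Your overall scheme (integrated Korn--Weitzenb\"ock identity for $\mathcal D$, then the weighted Hardy inequality, then ``absorb for $s$ large'') does not close, and the gap is quantitative rather than cosmetic: the constant $C_1$ in your Step 1 is not independent of $s$. The error $E(u)$ produced by the two weighted integrations by parts is controlled by $\int |u|\,|\nabla u|\,|\nabla(\phi^2\psi^2)|$, and since $\phi^2\psi^2=x^{4+4a-2n}e^{-2s/x}$ one has $|\nabla(\phi^2\psi^2)|\sim \frac{2s}{x^2}\,\phi^2\psi^2=2s\,\phi\,\psi^2$ near the boundary: your bound $|\nabla(\phi^2\psi^2)|\le C\phi\psi^2$ is true only with $C$ growing linearly in $s$. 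Hence $|E(u)|\le (4s+o(1))\,\|u\|_{L^2_\psi}\,\|\phi\nabla u\|_{L^2_\psi}$, so after Young your $C_\epsilon$, and therefore $C_1$, grows like $s^2$, while Hardy only gives $\|u\|_{L^2_\psi}\le \frac{C}{s}\|\phi\nabla u\|_{L^2_\psi}$. The product $C_1C_H$ therefore does not tend to $0$ as $s\to\infty$; with the sharp constants it is of size about $4$, so nothing can be absorbed: the gain of $1/s$ coming from the exponential weight is exactly eaten by the factor $s$ produced when a derivative falls on that same weight. A decisive sanity check: apart from discarding the coefficient $1-\frac2n$, which is still $\ge 0$ when $n=2$, your argument never uses $n\ge 3$, so if it were correct it would prove (API) for $n=2$; but (API) implies finite-dimensionality of the kernel of $\mathcal D$ in the weighted space, and for $n=2$ that kernel is infinite dimensional (holomorphic functions), so (API) fails there, as the paper notes.

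This is precisely the difficulty the paper flags (``$|\mathcal D\omega|$ does not control $|d^*\omega|$ pointwise\dots we need a more precise estimate''), and its proof is organized to avoid your absorption step. Working in a collar where $g=dx^2+h$ and decomposing $\omega=f\,dx+\alpha$, it integrates by parts only the combinations $f\partial_x f$ and $\tfrac12\partial_x|\alpha|^2$ against $x^{2t-2}e^{-2s/x}$, so the terms proportional to $s$ appear with a definite sign, namely as $(s+o(1))\,x^{2t-4}e^{-2s/x}f^2$ and $(s+o(1))\,x^{2t-4}e^{-2s/x}|\alpha|^2$ on the favorable side of exact identities; the tangential term $\langle\alpha,d_hf\rangle_h$ is rewritten as $2(\mathcal D\omega)(\alpha,dx)-\tfrac12\partial_x|\alpha|^2$, and the single remaining mixed term $(d^*_h\alpha)f$ is controlled through the tangential inequality $2\int_{\partial\Omega}|\mathcal D_h\alpha|_h^2\ge\int_{\partial\Omega}\big[\tfrac{n-2}{n-1}|d^*_h\alpha|^2-\Ric_h(\alpha,\alpha)\big]$, whose contribution carries the better weight $x^{2t}$ and is absorbable for supports close to the boundary, uniformly in $s$. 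It is in that inequality --- not in the coefficient $1-\frac2n$ --- that $n\ge3$ genuinely enters. To repair your argument you would need to identify a sign for the $O(s)$ part of $E(u)$ instead of estimating it in absolute value, and doing so essentially forces the component-wise boundary analysis the paper carries out.
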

\begin{proof}
Here we can not  apply  \cite{CD2}, corollary D.5 page 79, because we need the same kind of inequality with
the Ahlfors operator in place of the Killing one, and $|\mathcal D\omega|$ does not control $|d^*\omega|$ pointwise.
Thus we need a more precise estimate.

Before going to the proof, let us make some simplifying assumptions. As we will work near the boundary, we may
first choose for $x$ the distance to the boundary. The metric then take the form $g=dx^2+h(x)$, where $h(x)$ is
a family  of metrics on $\partial\Omega$. The difference between the connection of $g$ and the connection of $dx^2+h(0)$
goes to zero on the boundary.
We then may assume that the metric is a product $g=dx^2+h$, where $h$ is a fixed metric
on $\partial\Omega$ and $x\in(0,\epsilon)$, with a small $\epsilon$.

For a one form $\omega$ compactly supported in $\Omega_\epsilon:=(0,\epsilon)\times\partial\Omega$, we decompose
$$
\omega=fdx+\alpha,
$$
where $f$ is a function on $\Omega_\epsilon$ and $\alpha $ a one form in $C^\infty(\Omega_\epsilon,T^*\partial\Omega)$.
First, from the equality
$$
\langle \mathcal D\omega,fdx\otimes dx\rangle_g=f\partial_xf+\frac1n (d^*\omega)f=(1-\frac1n)f\partial_xf+\frac1n(d^*_h\alpha)f,
$$
combined with
\begin{eqnarray*}
J:=\int x^{2t-2}e^{-2s/x}(-\partial_xf)f&=&\int_{\partial\Omega}\left(\int_0^\epsilon x^{2t-2}e^{-2s/x}[-\frac12\partial_x(f^2)]dx\right)d\mu_h\\
&=&\int_{\partial\Omega}\left(\int_0^\epsilon (s+o(1))x^{2t-4}e^{-2s/x}f^2dx\right)d\mu_h\\
&=&\int(s+o(1))x^{2t-4}e^{-2s/x}f^2,
\end{eqnarray*}
we deduce
\bel{estif}
\int x^{2t-2}e^{-2s/x}[(d^*_h\alpha)f-n\langle\mathcal D\omega,fdx\otimes dx\rangle_g]=(n-1)\int x^{2t-4}e^{-2s/x}(s+o(1))f^2.
\ee
Now, let  us compute
\begin{eqnarray*}
I:=\int x^{2t-2}e^{-2s/x}(d^*_h\alpha)f&=&\int_0^\epsilon x^{2t-2}e^{-2s/x}\left(\int_{\partial\Omega} (d^*_h\alpha)fd\mu_h\right)dx\\
&=&\int_0^\epsilon x^{2t-2}e^{-2s/x}\left(\int_{\partial\Omega} \langle \alpha, d_h f\rangle_hd\mu_h\right)dx.\\
\end{eqnarray*}
Let $(x^i)=(x^0=x,x^A)$ be a coordinate system of $\Omega_\epsilon$ adapted to its character. We rewrite
$$
\langle \alpha, d_h f\rangle_h=\omega^A\partial_A\omega_0=2\omega^A(\mathcal D\omega)_{0A}-\omega^A\partial_0\omega_A=
2(\mathcal D\omega)(\alpha,dx)-\frac12\partial_x|\alpha|^2.
$$
This shows that
$$
I=2\int x^{2t-2}e^{-2s/x}\langle\mathcal D\omega,\alpha\otimes dx\rangle_g+\int x^{2t-4}e^{-2s/x}(s+o(1))|\alpha|^2
$$
so
\bel{estialpha}
\int x^{2t-2}e^{-2s/x}[(d^*_h\alpha)f-2\langle \mathcal D\omega,\alpha\otimes dx\rangle_g]=\int x^{2t-4}e^{-2s/x}(s+o(1))|\alpha|^2.
\ee

We will now show that any term appearing in the left hand-side  of \eq{estialpha} and \eq{estif} can be estimated in absolute
value by a term of the form $\frac a 2\|\phi\mathcal D\omega\|^2_{L^2_\psi}+\frac1{2a}\|\omega\|^2_{L^2_\psi}$, for any $a>0$, possibly modulo terms
of the form $\|o(1)\omega\|^2_{L^2_\psi}$, where $o(1)\longrightarrow 0$ when $x$ goes to zero. This will then prove the announced result.
Let
\begin{eqnarray*}
\eta_{AB}:=(\mathcal D\omega)_{AB}&=&\frac12(\nabla_A\omega_B+\nabla_B\omega_A)+\frac1n(d^*\omega)h_{AB}\\
&=&(\mathcal D_h\alpha)_{AB}+[(\frac1n-\frac1{n-1})d^*_h\alpha-\frac1n\partial_xf]h_{AB}\\
&=&(\mathcal D_h\alpha)_{AB}-\frac1{n-1}(\mathcal D\omega)_{00}h_{AB},
\end{eqnarray*}
then
$$
|\eta|_h^2=|\mathcal D_h\alpha|_h^2+\frac 1{n-1}[(\mathcal D\omega)_{00}]^2.
$$
The inequality $|\nabla\alpha|_h^2\geq \frac1{n-1}|d_h^*\alpha|^2$ then gives
\begin{eqnarray*}
2\int_{\partial\Omega}|\mathcal D_h\alpha|_h^2&=&\int_{\partial\Omega}[|\nabla_h\alpha|^2+\frac{n-3}{n-1}|d_h^*\alpha|^2-\Ric_h(\alpha,\alpha)]\\
&\geq& \int_{\partial\Omega}[\frac{n-2}{n-1}|d_h^*\alpha|^2-\Ric_h(\alpha,\alpha)],
\end{eqnarray*}

Therefore, for any constant $a>0$:
\begin{eqnarray*}
\left|\int x^{2t-2}e^{-2s/x}(d^*_h\alpha)f\right|&\leq& \frac a 2\int x^{2t}e^{-2s/x}(d^*_h\alpha)^2+\frac1{2a}\int x^{2t-4}e^{-2s/x}f^2\\
&\leq&\frac{n-1}{n-2}\frac a 2\int x^{2t}e^{-2s/x}\left[2|\mathcal D_h\alpha|_h^2+O(1)|\alpha|^2\right]\\
&&\;\;\;+\frac1{2a}\int x^{2t-4}e^{-2s/x}f^2\\
&\leq&\frac{n-1}{n-2}\frac a 2\int x^{2t}e^{-2s/x}\left[2|\eta|_h^2+O(1)|\alpha|^2\right]\\
&&\;\;\;+\frac1{2a}\int x^{2t-4}e^{-2s/x}f^2
\end{eqnarray*}
Similarly for any constant $b>0$:
$$
\left|\int x^{2t-2}e^{-2s/x}\langle \mathcal D\omega,\alpha\otimes dx\rangle_g\right|\leq\frac b 2 \int x^{2t}e^{-2s/x}|\nu|_h^2+\frac1{2b}\int x^{2t-4}e^{-2s/x}|\alpha|^2,
$$
where $\nu_A:=(\mathcal D\omega)_{0A}$.
Also, for any constant $c>0$:
$$
\left|\int x^{2t-2}e^{-2s/x}\langle \mathcal D\omega,fdx\otimes dx\rangle_g\right|\leq\frac c 2 \int x^{2t}e^{-2s/x}|(\mathcal D\omega)_{00}|^2+\frac1{2c}\int x^{2t-4}e^{-2s/x}f^2
$$
By combining the last three inequalities for large $a,b,c$ with equations \eq{estialpha},\eq{estif} and the fact that
$$
|\mathcal D\omega|^2=|(\mathcal D\omega)_{00}|^2+2|\nu|^2_h+|\eta|^2_h
$$
we get the proof of the proposition \ref{francois}.
\end{proof}

As in the case of electric fields, one can use the same procedure  to glue any TT-tensor $V_{ij}$ defined outside a compact  set $K$ of $\R^3$
(containing zero for simplicity)
with a unique Beig-Bowen-York tensor \cite{BeigBY}\cite{BeigKrammer} as follows.
Let us consider the $10$ parameters family of Beig-Bowen-York tensors
$$E_{ij}=^1K_{ij}+...+^4K_{ij},$$
where the $^lK_{ij}$'s are defined in \cite{BeigBY}.
Let also consider a fixed basis $(v_1,...,v_{10})$ of the space of conformal Killing fields (choose  particular $^l\eta_j(x)$'s in \cite{BeigBY}
for instance).
Let the $\Omega_i$'s be chosen
as in section \ref{secdivfree}. The two TT-tensors fields $V$ and $E$ can be glued on $\Omega$ modulo kernel.
For the kernel projection, one projects on any elements $v_i$ of the basis: each of them gives the difference of their
respective "flux"\footnote{In this setting this correspond to linear momentum, angular momentum,...}  across, say $\partial_2\Omega$ :
$$
p_i:=\int_{\Omega}\langle\mathcal D^* T,v_i\rangle=\int_{\partial\Omega_2}E(v_i,\eta)-\int_{\partial\Omega_2}V(v_i,\eta),
$$
where $\eta$ is the unit normal.
One then chooses the $10$-parameters of $E$ to make vanish the $10$ projections (it is easily
be checked that the linear map which sends  the $10$ parameters of $E$ to the $10$ real numbers $\int_{\partial_2\Omega}E(v_i,\eta)$
is an isomorphism of $\R^{10}$).

This construction has the advantage to produce an infinite dimensional family of  TT-tensors with well know  infinity. Moreover, by using the Licherowicz-York
method (see eg. \cite{IsenbergCMC}) it gives rise
to conformally euclidian CMC initial data  for the Einstein equation. Because of conformal euclidian setting, such a kind of data
is {  valued}  by numerical relativity.

\begin{remark}
Here also one can imagine a more sophisticated gluing (and/or extension), using open sets of the form used in \cite{KMPT}
for instance.
\end{remark}

\begin{remark}
It is  easy to prove  that on any  relatively compact open set of $(M,g)$, the set of smooth TT-tensors
is infinite dimensional. Taking any (small) ball  and gluing TT-tensors with zero as before near the sphere boundary, one deduces that
 the set of smooth TT-tensors with compact support
in a fixed  ball (thus on any open set)  is also infinite dimensional.
\end{remark}

%

\subsection{Linearized scalar curvature operator}
We consider the operator which to a Riemannian metric $g$ give{s} its scalar curvature. The linearization of this operator
is another operator $P$ from symmetric two tensors to functions given by
$$
P h=\nabla^k\nabla^lh_{kl}-\nabla^k\nabla_k(\Tr
\;h)-R^{kl}h_{kl},
$$
Its formal adjoint is
$$
P^*f=\nabla\nabla f-\nabla^k\nabla_k fg-f\;Ric(g).
$$
Those operators were studied by Fischer and Marsden \cite{FM1974}.
It is well know that the dimension of the kernel of $P^*$ is less or equal to $(n+1)$ (see eg. \cite{Corvino}).

Compactly supported elements in the kernel of $P$ {play} an important role
in some situations (see \cite{CorvinoAS} on $\R^n$).
Here again our procedure give{s} a construction in a general context.
(API) is proved in \cite{CD2} and
(KRC)  {holds} from section \ref{secex}. Note that the kernel is trivial
 in generic situations or on small balls \cite{BCS}.

Here also, on $\R^n$ ($n\geq 3$) for instance, one can glue any element in the kernel of $P$, smooth outside a compact set of $\R^n$ with
an element of the family
$$
E=\frac{m}{|x-c|^{n-2}}\mbox{euc},
$$
where euc is the euclidian metric, $m\in\R$, and $c\in\R^n$.\\

\subsection{A non linear application}
As already written in the introduction, the gluing procedure has been used in a non linear context in general relativity.
We are interested here in a non linear operator which appeared  in riemannian  Weyl structures.
For a riemannian manifold $(M,g)$, we consider the operator from one forms to functions defined
by
$$
\mathcal P\theta:=d^*\theta+\frac{n-2}4|\theta|^2.
$$
This operator is related to the scalar curvature of a  Weyl structure by (see \cite{GauduchonWeyl} for instance, with
a different normalisation of $\theta$).
$$
R^W=R(g)+(n-1)\mathcal P\theta.
$$
The linearization  of $\mathcal P$ at $\theta$ is
$$
P\omega=d^*\omega+\frac{n-2}2\langle \theta,\omega\rangle.
$$
The adjoint of $P$ is then
$$
P^*u=du+\frac{n-2}2u\theta.
$$
Let $u$ be in the kernel of $P^*$. If $u$ vanishes at some point $p$, then $u$ vanishes near $p$, and where $u$ does not
vanishes,
$\theta=-\frac2{n-2}d\ln(|u|)$.
Thus the kernel of $P^*$ on any connected open set is trivial if and only if $\theta$ is not exact on this set.
Otherwise the kernel is one dimensional.

One then proceeds as in  \cite{Corvino} to
show that for any smooth function $\rho$ with compact support, and close to zero, there exists a small, smooth one form $U$, with compact
support close to the support of $\rho$, such that up to kernel projection if any,
$$
\mathcal P(\theta+U)=\mathcal P(\theta)+\rho.
$$
In the same way, as in \cite{Delay:Collescal}, exploiting the fact that the norm of the  inverse of the operator $\pi_{\maclKzo } {\mathcal L}_{\phi,\psi}$ in Theorem \ref{iso} is uniformly
bounded for any $\theta'$  close to $\theta$ in $W^{k+1,\infty}_\phi$, one can glue two Weyl form connexions close one to each other on a compact region,
by interpolating their images with $\mathcal P$.
$$
\mathcal P[\chi\theta+(1-\chi)\theta'+U]=\chi\mathcal P(\theta)+(1-\chi)\mathcal P(\theta').
$$
As in \cite{Corvino}, on $(\R^n,$euc$)$ it is possible to glue an asymptotically flat Weyl form connexion (see \cite{Vassal} for a definition) such that $\mathcal P(\theta)=0$
with a
$$
\theta_m:= \frac{4m\;dr}{r^{n-1}+mr}=df_m\;,\;\;f_m=-\frac4{n-2}\ln\left(1+\frac m{r^{n-2}}\right),
$$
on an annulus close to infinity, to a form connexion in $\mathcal P^{-1}(\{0\})$.
{  In particular the set of AF Weyl connexions on  $(\R^n, $euc$)$ with vanishing Weyl scalar curvature and correspond to the Levi Civita
connection of a Schwarzschild metric}
$$
g_m=\left(1+\frac m{r^{n-2}}\right)^{\frac4{n-2}}\mbox{euc}
$$ outside of a compact set, is dense in
the set of AF Weyl connexions on  $(\R^n, $euc$)$ with vanishing Weyl scalar curvature.

%
\section{Appendix : Scaling Estimates }\label{Sscaling}
For completeness, we recall the appendix B of \cite{CD2} (with some misprints corrected) and we add
a Sobolev estimate let to the reader there.

\subsection{Preliminary}
The weight functions we consider have the following property:

\begin{equation}\label{lcond}
 |\phi^{i-1}\nabla^{(i)}\phi|_g\leq C_{i}\;,\;\;\;
|\phi^{i}\psi^{-1}\nabla^{(i)}\psi|_g\leq C_{i}\;,
\end{equation}
for $i\in\N$ and for some constants $C_i$. This implies that  for all
$i,k\in \N$ we have \be \label{0.1}
|\phi^{i-k}\nabla^{(i)}\phi^k|_g\leq C_{i,k}\;,\;\;\;
|\phi^{i}\psi^{-k}\nabla^{(i)}\psi^k|_g\leq C_{i,k}\;. \ee Thus, for $m,s,i,k\in\N$  the maps
$$
\psi^{-m}\phi^{i-s}\nabla^{(i)}(\phi^s\psi^m\cdot)
:\zH^{k+i}_{\phi,\psi}\longmapsto \zHkpp \;,
$$
$$
\psi^{-m}\phi^{i-s}\nabla^{(i)}(\phi^s\psi^m\cdot)
:W^{k+i,\infty}_{\phi}\longmapsto W^{k,\infty}_{\phi}\;,
$$
$$
\psi^{-m}\phi^{-s}\nabla^{(i)}(\phi^{i+s}\psi^m\cdot)
:\zH^{k+i}_{\phi,\psi}\longmapsto \zHkpp \;,
$$
\be\label{mpp}
\psi^{-m}\phi^{-s}\nabla^{(i)}(\phi^{i+s}\psi^m\cdot)
:W^{k+i,\infty}_{\phi}\longmapsto W^{k,\infty}_{\phi}\;, \ee are
continuous and bounded.  The function $\varphi$ satisfies the
same condition \eq{0.1} as  $\psi$, so that we can replace
$\zH^j_{\phi,\psi}$ by $C^{j,\alpha}_{\phi,\varphi}$ in the equations \eq{mpp}.

 For all
$p\in \Omega$, we denote by $B_p $, the open ball of center $p$ with
radius $\phi(p) /2$. Changing the defining function $x$ if necessary, we require that for all $p\in \Omega $,
\bel{scalprop0}B(p,\phi(p) ) \subset \Omega\;.\ee

\begin{lemma}\label{supinf}
There exists a constant $C_1>0$ such that for all
$p\in \Omega $ and for all $y\in B_p $, we have
\bel{scalprop1}
C_1^{-1}\phi(p) \leq \phi (y)\leq C_1\phi(p) .
\ee
There exists a constant
$C_2>0$ such that for all $p\in \Omega $ and for all $y\in B_p $, we have
\bel{scalprop2}
C_2^{-1}\varphi(p)\leq \varphi (y)\leq C_2\varphi(p).
\ee
The same assertion holds when substituting $\varphi$ with $\psi$.
\end{lemma}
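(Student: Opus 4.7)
The plan is to reduce both parts of the lemma to one-dimensional computations along geodesics in $B_p$, exploiting the fact that the radius $\phi(p)/2 = x(p)^2/2$ is tuned precisely to the logarithmic derivatives of the weight functions.

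First I would prove comparability of $\phi$. Since $x \in C^\infty(\overline{\Omega})$, it is globally Lipschitz with some constant $L$, so that for $y\in B_p$ and any minimizing geodesic $\gamma$ from $p$ to $y$ one has $|x(y)-x(p)| \leq L\,d(y,p) \leq L x(p)^2/2$. For $p$ near the boundary, say $x(p) \leq 1/L$, this forces $|x(y)-x(p)| \leq x(p)/2$, hence $x(y)/x(p)\in[1/2,3/2]$, so $\phi(y)/\phi(p) \in [1/4,\,9/4]$. For $p$ at distance bounded away from $\partial\Omega$, the function $x$ is bounded above and away from zero on that closed region, and a standard continuity/compactness argument (using that $B_p$ has diameter bounded by $\phi(p)/2$ which is itself bounded) yields the same comparability. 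Combining the two ranges and taking a common constant produces \eqref{scalprop1}.

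Next, for $\psi = x^{2(a-n/2)}e^{-s/x}$ one computes
$$\nabla\log\psi \;=\; \left(\frac{2a-n}{x} + \frac{s}{x^2}\right)\nabla x,$$
so that $|\nabla\log\psi|_g \leq C/x^2$ (with $C$ absorbing both terms, since $x$ is bounded on $\overline\Omega$). By the first step, along any geodesic in $B_p$ we have $x \geq x(p)/2$, hence $|\nabla\log\psi|_g \leq 4C/x(p)^2$ on $\gamma$. Integrating,
$$|\log\psi(y)-\log\psi(p)| \;\leq\; \frac{4C}{x(p)^2}\cdot d(y,p) \;\leq\; \frac{4C}{x(p)^2}\cdot\frac{x(p)^2}{2} \;=\; 2C,$$
and exponentiating gives \eqref{scalprop2} with $C_2 = e^{2C}$. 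Because $\varphi = x^{2a}e^{-s/x}$ has the same structure (the $\log$-derivative is again of the form $O(1/x^2)$), the identical argument yields the third assertion.

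The main (and only) subtlety is the ordering: one must establish the comparability of $\phi$ (equivalently, of $x$) on $B_p$ first, before converting the scale-invariant bound $|\nabla\log\psi|_g \leq C/x^2$ into a finite integral along $\gamma$. This also explains the choice of radius $\phi(p)/2 \sim x(p)^2$ rather than $x(p)$: with radius $x(p)$ the integral $\int_\gamma |\nabla\log\psi|_g$ would be of order $1/x(p)$ and diverge at the boundary, whereas with radius $x(p)^2$ it is $O(1)$, giving the desired multiplicative comparability uniformly in $p$.
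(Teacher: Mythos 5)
Your argument is correct and rests on the same mechanism as the paper's own proof: on $B_p$ the defining function varies only by $O(x(p)^2)$, so the exponent $s/x$ varies by $O(1)$ — the paper estimates the ratio $e^{-s(x(p)-x(q))/(x(p)x(q))}$ directly (handling the power prefactor by product stability) where you integrate $\nabla\log\psi$ along a geodesic, which is only a repackaging. The one point to tighten is your interior case ($x(p)$ not small): to know $x$ stays bounded below on all of $B_p$ there you should invoke the standing normalization \eqref{scalprop0}, $B(p,\phi(p))\subset\Omega$, which gives $d(y,\partial\Omega)>\phi(p)/2$ for $y\in B_p$; the paper avoids the case split entirely by combining the triangle inequality for $d(\cdot,\partial\Omega)$ with $x(p)^2<d(p,\partial\Omega)$, which yields the comparability uniformly in $p$.
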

\begin{proof}

Let us remind that $\phi = x^2$ where  $x$ is
equivalent to $d(\cdot,\partial\Omega)$. In order to prove \eq{scalprop1}, we
compute for all $q\in B_p$: by the triangle inequality,
$$
d(p,\partial\Omega)-d(p,q)<d(q,\partial\Omega)\leq d(p,\partial\Omega)+d(p,q)\;.
$$
Then, since $d(p,q)<x(p)^2/2$ for $q\in B_p$,
$$
d(p,\partial\Omega)-x(p)^2/2<d(q,\partial\Omega)\leq d(p,\partial\Omega)+x(p)^2/2\;.
$$
From \eq{scalprop0} we have $x(p)^2<d(p,\partial\Omega)$. This gives
$$
d(p,\partial\Omega)/2<d(q,\partial\Omega)\leq 3d(p,\partial\Omega)/2\;,
$$
and as $x$ is equivalent to $d(.,\partial\Omega)$ we obtain
\eq{scalprop1}.

 Now recall that $\psi =e^{-s/x}$,
where $s\in\R,\; s\neq 0$.
Moreover, for all $q\in B_p$,
$$
e^{-s/x(p)}e^{s/x(q)}=e^{-s(x(p)-x(q))/x(p)x(q)},
$$
but $|x(p)-x(q)|$  is bounded by some constant times $x(p)^2$ and
$x(p)x(q)$ is equivalent to $x(p)^2$. We so get \eq{scalprop2} for $\psi$.

If $\varphi_1$ and $\varphi_2$ satisfy
\eq{scalprop2}, then $\varphi_1 \varphi_2$ also will. It follows
that $\varphi= x^\alpha e^{s/x}$ can also be used as a weighting
function in our context.
\end{proof}

\subsection{Estimates in H\"older spaces}
 In this section we will see that the choice of functions $\phi $ and $\varphi $ will guarantee
the estimate \eq{erc}.   We assume
that $\Omega$ is an open subset of $\R^n$, and that the
elliptic operator we work with is an operator on functions. The
result generalizes to tensor fields on manifolds by using
coordinate patches, together with covering arguments.

 For $p\in \Omega$, we define
$$
\varphi_p  :B(0,1/2)\ni z\mapsto p+\phi(p) z\in B_p .
$$
For all functions $u$ on $\Omega$ and all multi-indices $\gamma$ we
have
$$
\partial_z^\gamma(u\circ
\varphi_p )=\phi(p) ^{|\gamma|}(\partial^\gamma u)\circ \varphi_p
.
$$
Let $L(p,\partial)$ be a strictly elliptic (\emph{e.g.}, in the
sense of Agmon-Douglis-Nirenberg) operator of order $2m$ on $\Omega$ and
set
$$
(L_\phi u)(p):=[L(\cdot,\phi \partial)u](p).
$$
Notice that in our setting $L_\phi=\mathcal L_{\phi,\psi}$ will be elliptic uniformly
degenerate whenever $\phi(p) $ approaches zero in some regions. We
assume that the coefficients of $L$ are in $C^{k,\alpha}_{\phi,1}
(\Omega)$. For all $p\in \Omega $, we define the elliptic operator $Q_p$ on
$B(0,1/2)$ by
$$
Q_p(z,\partial):=L(\varphi_p (z),(\phi(p) )^{-1}\phi
\circ\varphi_p (z)\partial).
$$
We then have
$$
Q_p(u\circ\varphi_p )=(L_\phi u)\circ\varphi_p .
$$

By the lemma \ref{supinf}, the $C^{k,\alpha}(B(0,1/2))$ norm of the coefficients of
$Q_p$ are bounded by the $C^{k,\alpha}_{\phi,1} (\Omega)$ norm of the
coefficients of $L$. On the other hand, $Q_p$ is strictly elliptic and
by the usual interior elliptic estimates, there exists $C>0$, which
depends neither on $p$ nor on $v$,  such that, for all functions $v\in
L^{2}(B(0,1/2))$ such that $Q_pv$ is in $C^{k,\alpha}(B(0,1/2))$
we have $v\in C^{k+2m,\alpha}(B(0,1/4))$ and
$$
\|v\|_{C^{k+2m,\alpha}(B(0,1/4))}\leq
C(\|Q_pv\|_{C^{k,\alpha}(B(0,1/2))}+\|v\|_{L^{2}(B(0,1/2))}).
$$
So, if $u$ is in $ L^{2}_{\varphi\phi^{-n/2}} (M)$ with $Lu\in
C^{k,\alpha}_{\phi ,\varphi }(\Omega)$, then $u\in
C^{k+2m,\alpha}_{loc}$.

For $p\in \Omega $, we  define $B'_p $ the ball of center $p$ and
radius $(1/4)\phi(p) $. It follows from \eq{scalprop1} that there
is a $p$--independent number $N$ such that each $B_p$ is covered
by $N$ balls $B_{p_i(p)}'$, $i=1,\ldots,N$. We then have (the
second and the last inequalities below come from \eq{scalprop2})

\begin{eqnarray}\nonumber
\|u\|_{C^{k+2m,\alpha}_{\phi ,\varphi }(\Omega)}&\leq& C \sup_{p\in \Omega
}\|u\|_{C^{k+2m,\alpha}_{\phi ,\varphi }(B'_p )}\\\nonumber &\leq&
C \sup_{p\in \Omega }(\varphi(p)\|u\|_{C^{k+2m,\alpha}_{\phi(p) ,1}(B'_p
)})\\\nonumber &\leq& C \sup_{p\in M }(\varphi(p)\|u\circ\varphi_p
\|_{C^{k+2m,\alpha}(\varphi_p ^{-1}(B'_p ))})
\\\nonumber &=& C
\sup_{p\in \Omega }(\varphi(p)\|u\circ\varphi_p
\|_{C^{k+2m,\alpha}(B(0,1/4))})
\\\nonumber &\leq& C \sup_{p\in \Omega
}[\varphi(p)(\|(L_\phi u)\circ\varphi_p \|_{C^{k,\alpha}(B(0,1/2))}
+\|u\circ\varphi_p \|_{L^{2}(B(0,1/2))})]\\\nonumber &\leq& C
[\sup_{p\in \Omega }(\varphi(p)\|L_\phi u\|_{C^{k,\alpha}_{\phi(p) ,1}(B_p
)})
+\sup_{p\in \Omega }(\|u\|_{L^{2}_{\varphi\phi^{-n/2}}(B_p  )})]\\
&\leq& C (\|L_\phi u\|_{C^{k,\alpha}_{\phi ,\varphi
}(M)}+\|u\|_{L^{2}_{\varphi\phi^{-n/2}}(\Omega)})\;.
\label{weighcalc}\end{eqnarray} In particular $u\in
C^{k+m,\alpha}_{\phi ,\varphi }(\Omega)$. An identical calculation
gives
\begin{eqnarray*}
\|u\|_{C^{k+m,\alpha}_{\phi ,\varphi }(\Omega)} &\leq& C
(\|L_\phi u\|_{C^{k,\alpha}_{\phi ,\varphi
}(\Omega)}+\|u\|_{L^{\infty}_{\varphi}(\Omega)})\;.
\end{eqnarray*}

\subsection{Estimates in Sobolev spaces}

The proof of the following lemma is left to the reader in \cite{CD2}. It is somehow reminiscent of \cite{CD5} lemma 3.6.
Here, we recall the proof for completeness.
\begin{lemma}
Let $k\in\N$. There exists a
constant $C$ such that for
all $u\in{H^{k+2m}_{\phi,\psi}}$,
$$
 \|u \|_{H^{k+2m}_{\phi,\psi}} \le C \Big( \|\mathcal L_{\phi,\psi} u \|_{H^{k}_{\phi,\psi}} +
 \|u\|_{H^{0}_{\phi,\psi }}\Big)\;.
$$
\end{lemma}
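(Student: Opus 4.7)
The plan is to run the same scaling/covering argument that was just carried out for the H\"older estimate \eqref{weighcalc}, with the interior Schauder inequality replaced by the standard interior $L^2$ elliptic estimate for uniformly strongly elliptic operators of order $2m$.

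First I fix $p\in\Omega$ and pull $u$ and $L_\phi=\mathcal L_{\phi,\psi}$ back to the unit-scale ball $B(0,1/2)$ via $\varphi_p(z)=p+\phi(p)z$. As already recorded, $Q_p(u\circ\varphi_p)=(L_\phi u)\circ\varphi_p$, the operator $Q_p$ is uniformly strongly elliptic of order $2m$, and the $C^{k}$ norm of its coefficients on $B(0,1/2)$ is controlled by the $C^{k}_{\phi,1}(\Omega)$ norm of the coefficients of $L$ — hence bounded independently of $p$. The standard interior Sobolev estimate therefore gives, uniformly in $p$,
\[
\|u\circ\varphi_p\|_{H^{k+2m}(B(0,1/4))}
\le C\bigl(\|Q_p(u\circ\varphi_p)\|_{H^k(B(0,1/2))}+\|u\circ\varphi_p\|_{L^{2}(B(0,1/2))}\bigr).
\]

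Next I would translate each of these three terms into a weighted norm on $B'_p$. Derivatives rescale as $\partial_z^\gamma(u\circ\varphi_p)=\phi(p)^{|\gamma|}(\partial^\gamma u)\circ\varphi_p$, the change of variables produces a Jacobian $\phi(p)^n$, and by Lemma \ref{supinf} both $\phi$ and $\psi$ are comparable on $B_p$ to their values $\phi(p)$ and $\psi(p)$, with constants independent of $p$. Inserting/removing appropriate powers of $\phi(p)$ and a factor $\psi(p)^2\sim\psi^2$, one obtains a local weighted estimate
\[
\|u\|^2_{H^{k+2m}_{\phi,\psi}(B'_p)}
\le C\bigl(\|\mathcal L_{\phi,\psi} u\|^2_{H^k_{\phi,\psi}(B_p)}+\|u\|^2_{H^0_{\phi,\psi}(B_p)}\bigr),
\]
with $C$ independent of $p$.

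Finally, in contrast with the H\"older case where only a supremum over $p$ was needed, one has to sum these local inequalities. I would extract from the family $\{B'_p\}_{p\in\Omega}$ a countable sub-cover $\{B'_{p_i}\}$ of $\Omega$ whose dilates $\{B_{p_i}\}$ still have uniformly bounded multiplicity of overlap; this is a Vitali-type selection that exploits once again \eqref{scalprop1}, which says that the radii of nearby balls are mutually comparable. Squaring and summing the local estimates over $i$, the finite-overlap property converts both sides into the corresponding global weighted $H^j_{\phi,\psi}(\Omega)$-norms, producing the announced inequality.

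The only genuinely new point, compared with the already-completed H\"older case, is therefore the production of a locally finite cover by the balls $B'_{p_i}$; this is the main (but essentially geometric) obstacle, and it is a straightforward consequence of Lemma \ref{supinf}. Everything else is a transcription of the H\"older proof with $C^{k,\alpha}$ replaced by $H^k$ and $\sup_p$ replaced by $\sum_i$.
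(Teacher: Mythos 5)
Your argument is correct, and its analytic core (pull back by $\varphi_p$ at scale $\phi(p)$, apply the standard interior $L^2$ estimate for the uniformly elliptic operators $Q_p$, use Lemma \ref{supinf} to freeze the weights $\phi,\psi$ on each ball, and recover a $p$-independent local weighted estimate) is exactly the same as in the paper; where you diverge is the summation step. The paper does not invoke any abstract covering lemma: it builds, by hand, a bounded-overlap decomposition of a boundary collar into ``annuli times cubes'' $\mcU_\ell\subset\hat\mcU_\ell$, using the recursion $x_{k+1}+x_{k+1}^2=x_k-x_k^2$ to produce intervals $I_k\subset J_k$ of width $\sim x_k^2$ whose enlargements meet at most a fixed number of neighbours (cf.\ \eqref{firint2}), and subdividing $\partial\Omega$ into cubes of side $\sim x_k^2$ via \eqref{mineq}; it then rescales each $\hat\mcU_\ell$ to a fixed cube, applies Morrey's interior estimate there, sums over $\ell$, and treats the interior region $G_{-1}$ separately, where the weighted and unweighted norms are equivalent. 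You instead keep the Whitney-type balls $B'_p\subset B_p$ from the H\"older subsection and extract a countable subcover with uniformly bounded overlap of the dilates by a Vitali-type selection; this is legitimate, and the bounded multiplicity you assert does follow from \eqref{scalprop1} by the usual maximal-disjoint-subfamily plus volume-counting argument (radii of intersecting balls are comparable, and on the compact $\overline\Omega$ the metric is Euclidean-comparable at scale $\phi(p)$), but note that this selection is precisely the point the paper chose to make explicit rather than cite, so in a self-contained write-up you should spell it out. The trade-off: your route is shorter and matches the standard summation over Whitney/M\"obius-type charts familiar from Fredholm theory for uniformly degenerate operators, while the paper's explicit decomposition gives concrete overlap constants, handles the boundary collar and the interior region in one stroke, and avoids any appeal to covering theorems. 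One further small point common to both arguments and worth a sentence in yours: the passage between coordinate derivatives and covariant derivatives, and the $p$-uniform control of the coefficients and ellipticity of $Q_p$ (which rests on \eqref{lcond} and the identity $\mathcal L_{\phi,\psi}=L(p,\phi\partial)$), should be quoted from the H\"older subsection rather than taken silently for granted.
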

\proof
Let $f(x)=\frac{\sqrt{1+4(x-x^2)}-1}2$. Let $x_0\in(0,1)$ to be defined later and let us consider the sequence
$x_{k+1}=f(x_k)$. From the definition, we have
$$
x_{k+1}+x_{k+1}^2=x_k-x_k^2,
$$
and  $x_k$ decreases to zero.
As
$$
x-\frac{11}{10}x^2-[f\circ f(x)+\frac{11}{10}(f\circ f(x))^2]=\frac95x^2+O(x^3),
$$
 for $x_0$ small we have
$x_{k+2}+\frac{11}{10}x_{k+2}^2<x_k-\frac{11}{10}x_k^2$.
In particular the number of intervals of the form $J_k=[x_k-\frac{11}{10}x_k^2,x_k+\frac{11}{10}x_k^2]$ that intersect some $J_k$,$k>0$,
is equal to two. At this stage, by  reducing $x_0$ if necessary, we also have
$$
\forall x\in J_k\;,\;\;\;\; \frac12x_k^2<x^2<2x_k^2.
$$
We define $I_k=[x_k-x_k^2,x_k+x_k^2]=[x_{k+1}+x_{k+1}^2,x_k+x_k^2]$.

Let $F_{-1}=\{p\in\Omega, x(p)\geq x_0+x_0^2\}$ and let $F_k=\{p\in\Omega, x(p)\in I_k\}$ for $k\in\N$.

Let $G_{-1}=\{p\in\Omega, x(p)\geq x_0\}$ and let $G_k=\{p\in\Omega, x(p)\in J_k\}$ for $k\in\N$.


  For any
function  $f\in L^1(\Omega)$ we thus get
\bel{firint}
 \int_{\Omega}f = \sum_i \int_{F_i} f
 \;.
\ee
%
From above, for all positive integrable functions $f$ we
have
$$
\int_{F_k}f\leq \int_{G_k}f\leq \sum_{l, F_l\cap G_k\neq\emptyset}\int_{F_l}f.
$$
As the cardinality of $\{l, F_l\cap G_k\neq\emptyset\}$ does not exceed  3, we deduce

\bel{firint2}
  \int_{\Omega}f\le \sum_i \int_{G_i}
  f \le 3  \int_{\Omega}f
 \;.
\ee

The above construction provides a decomposition
of $\Omega$ into "annuli"  $F_k$, the size of which in the
$x$-direction is comparable to $x(p)^2$ for any $p\in F_k$; similarly the sizes of $G_k$ in the
$x$-direction are comparable to $x(p)^2$ for any $p\in G_k$.

We now assume that $x_0$ is small enough to identify $F_k$ with $I_k\times \partial\Omega$ for $k\geq 0$ and the same identifications are possible between $G_k$
and $J_k\times \partial \Omega$.\\

{   We continue with a annulus dependent,
cube decomposition of $\partial \Omega$}, as follows: Let
$\{(\mcO_i,\psi_i)\}_{i=1,\ldots,N}$  be a covering of $\partial \Omega$ by
coordinate charts with each coordinate system $\psi_i^{-1}$
mapping $\mcO_i$ smoothly and diffeomorphically to a
neighbourhood of $[0,1]^{n-1}$; the local coordinates on
$[0,1]^{n-1}$ will be denoted by $\theta^A$. We further assume
that $\cup \psi_i([0,1]^{n-1})$ covers $\partial \Omega$ as well.
Let $\varphi_i$ be an associated decomposition of unity, thus
$\sum_i \varphi_i=1$. Setting $f_i=(\varphi_i f)\circ\psi_i $,
for any integrable function $f$ we have

$$
\int_{[a,b]\times \partial \Omega}f = \sum_{i=1}^N \int_{[a,b]\times
[0,1]^{n-1}} f_i
 \;.
$$

Given  an interval $I_k$ define $m=m(k)\in \N$ by the
inequality
\bel{mineq}
 \frac 1 {m+1} \le x_k^2 < \frac 1 m
 \;.
\ee

Let  $\{K_j\}$ be the collection of closed $(n-1)$-cubes centered at $c_j$, with
pairwise disjoint interiors, and with edges of size $1/m$,
covering $[0,1]^{n-1}$. For any $K_j$ let $\hat K_j$ be the union
of those cubes $K_i$ which have non-empty intersection with $K_j$.
There exists a number $\hat N(n)$ such that $\hat K_j$
consists of at most $\hat N(n)$ cubes $K_i$. It follows that,
for any integrable function $f_i$:
$$
 \int _{[a,b]\times
[0,1]^{n-1}} f_i = \sum _{k}\int_{[a,b]\times {K_k}} f_i
 \;,
$$
and if $f_i\ge 0$ then
$$
 \int _{[a,b]\times
[0,1]^{n-1}} f_i \le  \sum _{k}\int_{[a,b]\times {\hat K_k}} f_i
 \le
 \hat N(n)
  \int _{[a,b]\times
 [0,1]^{n-1}} f_i \;.
$$

 We are ready now to pass to the heart of our argument. Let
$\{\mcU_\ell\}_{\ell\in \N}$ be the collection, without repetitions,
of the sets
$$
 \{ I_k \times \psi_i( K_j)\}_{k\in \N,\ i=1,\ldots,N,\ j=0,\ldots,m^{n-1}}
 \;.
$$
Similarly let $\{\hat\mcU_\ell\}_{\ell \in \N}$ be the collection,
without repetitions, of the sets
$$
 \{J_k \times \psi_i(\hat K_j)\}_{k\in \N,\ i=1,\ldots,N,\ j=0,\ldots,m^{n-1}}
 \;.
$$
{}From  what has been said above we have, for any positive integrable
function $f$,
\beaa
 &
 \displaystyle
 \int_{[0,x_0+x_0^2]\times \partial \Omega} f \le  \sum _\ell \int_{\mcU_\ell}
f
 \le
 N \int_{[0,x_0+x_0^2]\times \partial \Omega} f
 \;,
 &
 \\
 &
 \displaystyle
 \int_{[0,x_0+\frac{11}{10}x_0^2]\times \partial \Omega} f \le  \sum _\ell \int_{\hat\mcU_\ell} f \le N \hat N(n) \int_{[0,x_0+\frac{11}{10}x_0^2]\times \partial \Omega} f
 \;.
 &
\eeaa

If $\mcU_\ell= I_k \times \psi_i(K_j)$, we scale the local coordinates
$(x,\theta^A)$ in $\hat\mcU_\ell$ as
$$
(x,\theta^A)\mapsto ((x-x_k)/x_k^2 ,m (\theta^A-c_j))\;.
$$
This maps all $\mcU_\ell\subset \hat\mcU_\ell$'s to
 fixed cubes
$$
\mcU_\ell \longrightarrow [-1,1]\times [0,1]^n\subset [-\frac{11}{10},\frac{11}{10}]\times
[-1,2]^n \longleftarrow \hat\mcU_\ell
 \;,
$$

By construction there exists a constant $C>0$, independent of $i$,
such that:
$$
  {\sup_{I_i \times \partial \Omega}} \phi\le C {\inf_{I_i \times \partial \Omega}}
  \phi\;, \qquad
  {\sup_{\hat I_i \times \partial \Omega}} \phi\le C {\inf_{\hat I_i \times \partial \Omega}}
  \phi
  \;
$$
Hence the same is true on each $\mcU_\ell$ and $\hat\mcU_\ell$. Let
$\psi = e^{-s/x}$: it is shown at the beginning of the appendix that one also has
$$
  {\sup_{I_i \times \partial \Omega}} \psi\le C {\inf_{I_i \times \partial \Omega}}
  \psi\;, \qquad
  {\sup_{\hat I_i \times \partial \Omega}} \psi\le C {\inf_{\hat I_i \times \partial \Omega}}
  \psi
$$
(with perhaps a different constant $C$). Once again such
$\ell$-independent inequalities hold on the $\mcU_\ell$'s and
$\hat\mcU_\ell$'s.  At this step it is important to realize that
$$\mathcal L_{\phi,\psi}=L_\phi(p,\partial)=L(p,\phi\partial),$$
where $L$ is {\em uniformly  elliptic} of order 2m  on the relevant cubes.  A scaling and
the usual elliptic interior estimates~\cite[p.~246]{Morrey}
 for the operator $L$ give
$$
 \sum_{i \le k+2m} \int_{\mcU_\ell}\psi^2 \phi^{2i} |\nabla^{(i)}u|_g^2 \le C
\Bigg( \sum_{i \le k} \int_{\hat\mcU_\ell} \psi^2
\phi^{2i}|\nabla^{(i)}L_\phi u|_g^2 +
  \int_{\hat\mcU_\ell} \psi^2| u|^2 \Bigg)\;,
$$
 where $C$ does not depend upon $u$
Summing over $\ell$, we obtain:
$$
\begin{array}{l}

 \sum_{i \le k+2m} \int_{[0,x_0+x_0^2]\times \partial \Omega}\psi^2 \phi^{2i} |\nabla^{(i)}u|_g^2 \\
 \le C
\Bigg( \sum_{i \le k} \int_{[0,x_0+\frac{11}{10}x_0^2]\times \partial \Omega} \psi^2
\phi^{2i}|\nabla^{(i)}L_\phi u|_g^2+
  \int_{[0,x_0+\frac{11}{10}x_0^2]\times \partial \Omega} \psi^2| u|^2 \Bigg)\;.
\end{array}
$$
On the set $G_{-1}$, the norms of $H^l_{\phi,\psi}(\Omega)$ and the usual $H^l$ are equivalent  and interiors estimates also hold.
Thus the preceding inequality is also valid replacing $[0,x_0+x_0^2]\times \partial \Omega$ by $F_{-1}$ and
$[0,x_0+\frac{11}{10}x_0^2]\times \partial \Omega$ by $G_{-1}$. Finaly we use the comparability \eq{firint2} to conclude.
\qed

In the same way, using interior interpolations inequalities and scaling, we prove
\begin{lemma}[interpolation]
Let $0\leq l< k$. For any $\epsilon>0$ there exists $C>0$ such that for any $u\in H^k_{\phi,\psi}$,
$$
\|u\|_{H^l_{\phi,\psi}}\leq \epsilon \|u\|_{H^k_{\phi,\psi}}+C\|u\|_{H^0_{\phi,\psi}}.
$$
\end{lemma}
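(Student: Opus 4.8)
The plan is to copy the architecture of the preceding lemma almost verbatim, substituting the interior elliptic estimate by the classical interior interpolation (Ehrling-type) inequality on a fixed cube. First I would reuse the decomposition of $\Omega$ constructed in that proof: the annuli $F_k\subset G_k$ in the $x$-direction, the annulus-dependent cube decomposition of $\partial\Omega$, and the resulting families $\{\mcU_\ell\}$, $\{\hat\mcU_\ell\}$ with $\mcU_\ell\subset\hat\mcU_\ell$, each $\mcU_\ell$ sent to the fixed cube $[-1,1]\times[0,1]^n$ and each $\hat\mcU_\ell$ to $[-\frac{11}{10},\frac{11}{10}]\times[-1,2]^n$ by the rescaling $(x,\theta^A)\mapsto((x-x_k)/x_k^2,\,m(\theta^A-c_j))$, where $m\sim x_k^{-2}$ by \eqref{mineq}.

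Next I would transfer the weighted norm to the fixed cube. On $\hat\mcU_\ell$ both $\phi$ and $\psi$ are comparable, with $\ell$-independent constants, to their values $\phi_\ell$, $\psi_\ell$ at a reference point (this is \eqref{scalprop1} and its analogue for $\psi$ proved at the start of the appendix). Under the rescaling, each differentiation in the new coordinates produces a factor $x_k^2\sim\phi_\ell$, which is exactly the $\phi^i$ carried by $\nabla^{(i)}$ in the definition of $H^j_{\phi,\psi}$; hence for every $i$ simultaneously the quantity $\phi^{2i}|\nabla^{(i)}u|_g^2$ becomes $|\nabla^{(i)}\tilde u|^2$ on the cube up to uniform constants, and therefore
\[
\sum_{i\le j}\int_{\mcU_\ell}\psi^2\phi^{2i}|\nabla^{(i)}u|_g^2 \;\asymp\; \psi_\ell^2\,x_k^{2n}\,\|\tilde u\|_{H^j([-1,1]\times[0,1]^n)}^2
\]
with $\ell$-independent comparability constants, and similarly for $\hat\mcU_\ell$. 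Applying to $\tilde u$ the classical interpolation inequality, which for the two fixed cubes reads: for every $\epsilon>0$ there is $C_\epsilon=C_\epsilon(l,k,n)$ with $\|v\|_{H^l}\le\epsilon\|v\|_{H^k}+C_\epsilon\|v\|_{L^2}$, and scaling back, I get for each $\ell$
\[
\sum_{i\le l}\int_{\mcU_\ell}\psi^2\phi^{2i}|\nabla^{(i)}u|_g^2 \le \epsilon^2\sum_{i\le k}\int_{\hat\mcU_\ell}\psi^2\phi^{2i}|\nabla^{(i)}u|_g^2 + C_\epsilon^2\int_{\hat\mcU_\ell}\psi^2|u|^2,
\]
with $\epsilon$, $C_\epsilon$ not depending on $\ell$.

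Finally I would sum over $\ell$ and invoke the finite-overlap comparability \eqref{firint2} (so $\sum_\ell\int_{\mcU_\ell}f\le N\int_\Omega f$ and $\sum_\ell\int_{\hat\mcU_\ell}f\le N\hat N(n)\int_\Omega f$ for $f\ge0$), handling the region $F_{-1}\subset G_{-1}$ away from the boundary separately, where the weighted and unweighted norms are equivalent and the ordinary interior interpolation inequality applies directly. This yields $\|u\|_{H^l_{\phi,\psi}}^2\le C\epsilon^2\|u\|_{H^k_{\phi,\psi}}^2+C C_\epsilon^2\|u\|_{H^0_{\phi,\psi}}^2$; replacing $C\epsilon^2$ by $\epsilon$ (allowed since $\epsilon$ was arbitrary) gives the statement. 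The only genuinely delicate point, exactly as in the previous lemma, is the bookkeeping of the $\phi$-powers under scaling; the new input --- the interpolation inequality on a fixed domain --- is entirely standard, so the hard part is really just checking that one uniform scaling factor $\phi_\ell\sim x_k^2$ accounts for all the weights at once.
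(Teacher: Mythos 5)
Your proposal is correct and follows exactly the route the paper intends: the paper's entire proof of this lemma is the remark ``In the same way, using interior interpolations inequalities and scaling, we prove,'' i.e.\ re-run the machinery from the preceding lemma (same annuli $F_k\subset G_k$, same cube decomposition $\mcU_\ell\subset\hat\mcU_\ell$, same rescaling to fixed cubes, same finite-overlap summation and separate treatment of $G_{-1}$), with the interior elliptic estimate replaced by the classical interior interpolation inequality on the two fixed cubes. Your bookkeeping of how the single factor $\phi_\ell\sim x_k^2$ absorbs every $\phi^{i}$ under the scaling, and the observation that the resulting constant $\epsilon$ is $\ell$-independent, are precisely the points the paper is implicitly relying on.
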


\noindent{\sc Acknowledgements} :
I am grateful to P. T. Chru\'sciel, J. Corvino, F. Gautero and R. Mazzeo for theirs  useful comments.

\bibliographystyle{amsplain}

\bibliography{../references/newbiblio,%
../references/reffile,%
../references/bibl,%
../references/hip_bib,%
../references/newbib,%
../references/PDE,%
../references/netbiblio,%
../references/erwbiblio,%
 stationary}

\end{document}